\definecolor{seagreen}{RGB}{46,139,87}
\definecolor{maroon}{RGB}{128,0,0}
\definecolor{darkviolet}{RGB}{148,0,211}
\definecolor{twelve}{RGB}{100,100,170}
\definecolor{thirteen}{RGB}{100,150,50}
\definecolor{fourteen}{RGB}{200,0,0}
\definecolor{fifteen}{RGB}{0,200,0}
\definecolor{sixteen}{RGB}{0,0,200}
\definecolor{seventeen}{RGB}{200,0,200}
\definecolor{eighteen}{RGB}{0,200,200}
\newcommand{\xMapsto}[2][]{\ext@arrow 0599{\Mapstofill@}{#1}{#2}}
\def\Mapstofill@{\arrowfill@{\Mapstochar\Relbar}\Relbar\Rightarrow}
\newtheorem{thm}[equation]{Theorem}
\newtheorem*{theorem*}{Theorem}
\newtheorem*{conjecture*}{Conjecture}
\newtheorem*{corollary*}{Corollary}
\newtheorem{lemma}[equation]{Lemma}
\newtheorem{theorem}[equation]{Theorem}
\newtheorem{corollary}[equation]{Corollary}
\newtheorem{proposition}[equation]{Proposition}
\theoremstyle{definition}
\newtheorem{definition}[equation]{Definition}
\newtheorem{example}[equation]{Example}
\newtheorem{remark}[equation]{Remark}
\newtheorem{construction}[equation]{Construction}
\numberwithin{equation}{subsection} 
\newtheorem{thmx}{Theorem}
\def\l{\mathbb{L}}
\def\r{\mathbb{R}}
\def\z{\mathbb{Z}}
\def\cc{\mathcal{C}}
\def\ce{\mathcal{E}}
\def\co{\mathcal{O}}
\def\uI{\underline{I}}
\def\uM{\underline{M}}
\def\uP{\underline{P}}
\def\uR{\underline{R}}
\def\uA{\underline{A}}
\def\uC{\underline{C}}
\def\uH{\underline{H}}
\def\uM{\underline{M}}
\def\Ab{\mathcal{A}b}
\def\Tor{\operatorname{Tor}}
\def\cofib{\operatorname{cofib}}
\def\ker{\operatorname{ker}}
\def\coker{\operatorname{coker}}
\def\mfz{\underline{\mathbb{Z}}}
\newcommand{\Conf}{\operatorname{Conf}}
\newcommand{\Aut}{\operatorname{Aut}}
\def\upi{\underline{\pi}}
\def\Emb{\operatorname{Emb}}
\def\res{\operatorname{res}}
\newcommand{\sm}{\wedge}
\newcommand{\attop}[1]{{\let\textstyle\scriptstyle\let\scriptstyle\scriptscriptstyle\substack{#1}}}
\renewcommand{\atop}[1]{{\let\scriptstyle\textstyle\let\scriptscriptstyle\scriptstyle\substack{#1}}}
\newcommand{\x}{\times}
\renewcommand{\st}{\hspace{2pt} : \hspace{2pt}}
\renewcommand{\l}{\overset}
\newcommand{\too}[1]{\l{#1}\to}
\newcommand{\hteq}{\simeq}
\newcommand{\ints}{\cap}
\newcommand{\isom}{\cong}
\newcommand{\Z}{\mathbb{Z}}
\newcommand{\til}[1]{\widetilde{#1}}
\newcommand{\switchmargin}{
\if@reversemargin
\normalmarginpar
\else
\reversemarginpar
\fi
}
\newcommand{\highlighteva}[1]{\ifmmode{\text{\sethlcolor{llgray}\hl{$#1$}}}\else{\sethlcolor{llteal}\hl{#1}}\fi}
\definecolor{llteal}{RGB}{198,232,227}
\definecolor{llred}{RGB}{237,228,228}
\definecolor{llgray}{RGB}{230,230,230}
\definecolor{maroon}{RGB}{150,0,0}
\definecolor{orange}{RGB}{255,165,0}
\newcommand{\highlight}[1]{\ifmmode{\text{\sethlcolor{llgray}\hl{$#1$}}}\else{\sethlcolor{llred}\hl{#1}}\fi}
\newcommand{\ZZ}{\mathbb Z}
\newcommand{\sma}{\wedge}
\newcommand{\sus}{\Sigma}
\author{Eva Belmont}\address{Case Western Reserve University}\email{eva.belmont@case.edu}
\author{J.D. Quigley}\address{University of Virginia}\email{mbp6pj@virginia.edu}
\author{Chase Vogeli}\address{Cornell University}\email{cpv29@cornell.edu}
\title{Bredon homological stability for configuration spaces of $G$-manifolds}
\begin{document}

\begin{abstract}
McDuff and Segal proved that unordered configuration spaces of open manifolds satisfy
homological stability: there is a stabilization map $\sigma: C_n(M)\to
C_{n+1}(M)$ which is an isomorphism on $H_d(-;\Z)$ for $n\gg d$.
For a finite group $G$ and an open $G$-manifold $M$, under some hypotheses we
define a family of equivariant stabilization maps $\sigma_{G/H}:C_n(M)\to
C_{n+|G/H|}(M)$ for $H\leq G$. In general, these do not induce
stability for Bredon homology, the equivariant analogue of singular homology. Instead, we show that each $\sigma_{G/H}$ induces
isomorphisms on the ordinary homology of the fixed points of $C_n(M)$,
and if the group is Dedekind (e.g. abelian), we obtain the following Bredon
homological stability statement: $H^G_d(\bigsqcup_{n\geq 0}C_n(M))$ is finitely
generated over $\Z[\sigma_{G/H} : H\leq G]$. This reduces to the classical
statement when $G=e$.
\end{abstract}

\maketitle

\tableofcontents

\section{Introduction}

Let $X$ be a topological space and let $C_k(X)$ denote the configuration space of $k$ unordered points in $X$. If $X$ is the interior of a manifold with nonempty boundary, then there are stabilization maps
\begin{equation}\label{Eqn:StabIntro}
\sigma: C_k(X) \to C_{k+1}(X)
\end{equation}
defined by ``adding a point near the boundary" (cf. \cref{SS:Conf}). In the 1970's, McDuff \cite{McD75} and Segal \cite{Seg79} observed that these stabilization maps induce isomorphisms in a range of integral homology groups:

\begin{thm}[McDuff--Segal, Stability for Configuration Spaces (Strong Form)]\label{Thm:MS}
Assume $M$ is the interior of a manifold with nonempty boundary. The stabilization map \eqref{Eqn:StabIntro} induces an isomorphism
$$\sigma_*: H_d (C_k(M)) \to H_d(C_{k+1}(M))$$
for $d \leq k/2$. 
\end{thm}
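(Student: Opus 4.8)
The plan is to prove McDuff--Segal stability via the classical ``scanning'' or ``group-completion'' approach, but the most self-contained route—and the one I would take—is the homological-stability-via-spectral-sequences argument originating with Segal and refined by Randal-Williams, McDuff, and others. The key geometric input is a semi-simplicial (or rather, a filtered) resolution of $C_k(M)$ by configuration spaces with labels or by ``arc complexes.'' Concretely, fix a collar neighborhood $\partial M \times [0,1)$ of the boundary (after passing to a manifold with boundary whose interior is $M$), pick a basepoint region $U \cong \r^{\dim M}$ near the boundary in which the stabilization map adds its new point, and build a simplicial space $C_\bullet$ whose space of $p$-simplices parametrizes a configuration in $C_k(M)$ together with $p+1$ ordered ``rays'' connecting $p+1$ of the points to the boundary (or equivalently $p+1$ of the points lying in the collar, linearly ordered by height). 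The augmentation $|C_\bullet| \to C_k(M)$ is a homology equivalence because each fiber is a configuration of rays in a contractible region, hence the geometric realization of a contractible-by-a-standard-argument simplicial set.

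First I would set up the augmented semi-simplicial space and prove the augmentation is highly connected (this is the ``resolution is a resolution'' step): one shows the relevant ``arc/ray complex'' is $\lfloor (k-1)/2 \rfloor$-connected, typically via a link argument or a ``deleting/adding a point'' nerve-type argument à la Hatcher--Wahl. Second, I would run the spectral sequence of the skeletal filtration of $|C_\bullet|$; its $E^1$-page has the form $E^1_{p,q} = H_q$ of a space built from $C_{k-p-1}(M)$ crossed with an ordered configuration of $p+1$ rays in $U$, and since that ray space is homotopy equivalent to the ordered configuration space of $p+1$ points in $\r^{\dim M}$ (which is connected for $\dim M \geq 2$, or contributes trivially/handled separately for $\dim M = 1$), the $E^1$-term reduces to $H_q(C_{k-p-1}(M))$. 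Third, I would feed in the inductive hypothesis—stability holds for $H_d(C_j(M)) \to H_d(C_{j+1}(M))$ for all $d \leq j/2$ with $j < k$—to identify the $d_1$ differential with the stabilization map up to sign, so that the $E^2$-page vanishes in the required range; combined with the high connectivity of the augmentation from step one, this forces $H_d(C_k(M)) \cong H_d(\text{something stable})$ in the range $d \leq k/2$. The induction is primed by the base case $k$ small, where the range is vacuous or immediate.

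The main obstacle—and the technically delicate heart of the argument—is the connectivity estimate for the resolution in step one, i.e. showing the arc/ray complex on a configuration of $k$ points is roughly $(k/2)$-connected; this is where the precise slope $d \leq k/2$ comes from, and getting the constant right requires a careful induction on the complex itself (a ``bad simplices'' argument or an explicit contraction), which is exactly the part where McDuff's and Segal's original arguments, and later Randal-Williams--Wahl's axiomatic framework, put in real work. A secondary subtlety is handling the low-dimensional cases $\dim M = 1$ and keeping track of whether one needs $M$ connected (one reduces to the connected case by noting the boundary lies in one component and configurations distribute over components). I would also need to verify that the $d_1$ differential really is the stabilization map and not some twisted variant, which amounts to an explicit check on representing chains near the collar. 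Once the connectivity estimate is in hand, the spectral-sequence bookkeeping is routine, so I would budget most of the effort there.
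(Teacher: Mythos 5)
The paper does not supply its own proof of Theorem~\ref{Thm:MS}: it is stated as the classical McDuff--Segal theorem and used as a black box, with the precise slope $d\le k/2$ taken from Palmer's formulation (cited as \cite[Thm.~1.2]{Pal18}). So there is no internal proof to compare your attempt against. That said, your sketch is a sound outline of one standard modern proof, namely the augmented semi-simplicial (``arc/ray complex'') resolution method in the style of Randal-Williams--Wahl and Hatcher--Wahl. This is genuinely different from McDuff's original argument, which proceeds via her approximation theorem (scanning to compactly-supported sections of a compactified tangent bundle) followed by group completion, and from Segal's argument, which also relies on scanning; the semi-simplicial route is the one that most directly produces the sharp linear slope $d\le k/2$. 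Two caveats if you were to carry it out: (i) the connectivity estimate for the resolution is where essentially all the work lives, as you correctly flag, and getting $\lfloor (k-1)/2\rfloor$-connectedness is the content that fixes the slope; (ii) you need the spectral-sequence argument to identify the \emph{map} $\sigma_*$ rather than merely the abstract stable value of $H_d$---the usual device is either to resolve $C_{k+1}(M)$ relative to $C_k(M)$, or to show that in the stable range the $E^1$ row is exact away from the augmentation with $d^1$ equal (up to sign, and possibly a multiplicity one must account for) to the stabilization map, which you note needs checking. As a plan this is correct, but all the quantitative work is deferred to your ``step one.''
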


\subsection{Statement of main results}

In this paper, we investigate an equivariant analogue of the McDuff--Segal theorem where the manifold $M$ is replaced by a $G$-manifold (where $G$ is a finite abelian group) and singular homology is replaced by Bredon homology. 

The Bredon homology of a $G$-space $X$ with coefficients in the constant Mackey functor $\mfz$, written $H^G_*(X; \mfz)$, is an equivariant analogue of singular homology with integer coefficients. We recall this definition of Bredon homology, as well as its representation by a $G$-spectrum, in \cref{SS:Bredon}.

If $X$ is a $G$-space, then the space of unordered configurations of $k$ points in $X$
$$C_k(X) := \{(x_1,\ldots,x_k) \in X^k: x_i \neq x_j \text{ for all } i\neq j\}/\Sigma_k$$
is also a $G$-space with $G$ acting diagonally. If $X$ is a $G$-manifold which is the interior of a $G$-manifold with nonempty boundary containing a fixed point, then we can define $G$-equivariant stabilization maps
\begin{equation}\label{Eqn:FPGIntro}
\sigma_{G/G}: C_k(X) \to C_{k+1}(X)
\end{equation}
by ``adding a fixed point near the boundary'' (see \cref{SS:GConf}). In this situation, we say $X$ is \emph{$G$-stabilizable} (\cref{Def:HStab}). We can then ask if these maps induce isomorphisms in Bredon homology for $k$ sufficiently large. Surprisingly, this is not the case:

\begin{thmx}[Failure of Strong Equivariant Form of McDuff--Segal, Proposition \ref{Prop:H0Unstable}]\label{MT:BredonFails}
Let $G=C_p$ and let $M$ be a $G$-manifold which is $G$-stabilizable. The map
\begin{equation}\label{Eqn:GStabIntro}
(\sigma_{G/G})_*: H_0^G(C_n(M); \mfz) \to H_0^G(C_{n+1}(M); \mfz)
\end{equation}
is not surjective for any $n \geq 1$.
\end{thmx}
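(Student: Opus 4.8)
The plan is to detect the obstruction with geometric fixed points. Write $G=C_p$ and let $\Phi=\Phi^{C_p}$ denote geometric $C_p$-fixed points. For any $G$-space $X$ the isotropy-separation cofiber sequence $EC_{p+}\wedge H\mfz\wedge X_+\to H\mfz\wedge X_+\to\widetilde{EC_p}\wedge H\mfz\wedge X_+$, after applying $\pi_0^{G}$, produces a natural map
$$\Phi_X\colon H_0^G(X;\mfz)\longrightarrow \pi_0\bigl(\Phi(H\mfz)\wedge (X^G)_+\bigr).$$
Because the transfer $\mfz(G/e)\to\mfz(G/G)$ is multiplication by $p$, one has $\pi_0\Phi(H\mfz)=\Z/p$, and $\Phi(H\mfz)$ is connective; since $\Phi$ is monoidal and $\Phi(\Sigma^\infty_+X)=\Sigma^\infty_+(X^G)$, the target is $H_0(X^G;\Z/p)=(\Z/p)[\pi_0(X^G)]$. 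Unwinding the construction on a space with trivial $G$-action shows that the composite
$$H_0(X^G;\Z)=H_0^G(X^G;\mfz)\to H_0^G(X;\mfz)\xrightarrow{\ \Phi_X\ }(\Z/p)[\pi_0(X^G)]$$
(the first map induced by $X^G\hookrightarrow X$) is reduction mod $p$; in particular, for a component $c\in\pi_0(X^G)$ the class $[c]\in H_0^G(X;\mfz)$ coming from the fixed space satisfies $\Phi_X([c])=c\bmod p$.

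Now apply this with $X=C_k(M)$. The fixed space decomposes as
$$C_k(M)^G=\coprod_{pa+b=k}C_a\bigl((M\setminus M^G)/G\bigr)\times C_b(M^G),$$
the $(a,b)$-summand being the configurations built from $a$ free $G$-orbits and $b$ fixed points. Since $\sigma_{G/G}$ adjoins a fixed point (placed near the marked boundary fixed point), on fixed spaces $\sigma^G:=\sigma_{G/G}|_{(-)^G}$ carries the $(a,b)$-summand into the $(a,b{+}1)$-summand; hence $\sigma^G$ omits every component of $C_{n+1}(M)^G$ lying in a stratum with no fixed point. For $n\geq 1$ there is always such an omitted component $\phi$: when $p\mid n{+}1$ take $\phi$ to be a component of the pure–free–orbit stratum $C_{(n+1)/p}\bigl((M\setminus M^G)/G\bigr)$, and for the remaining $n$ one argues the same way after noting that $\sigma_{G/G}$ cannot produce configurations avoiding a fixed neighbourhood of the marked boundary point. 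Let $[\phi]\in H_0^G(C_{n+1}(M);\mfz)$ be the image of $\phi$ under $H_0(C_{n+1}(M)^G;\Z)\to H_0^G(C_{n+1}(M);\mfz)$. Then $\Phi_{C_{n+1}(M)}([\phi])=\phi\bmod p\neq 0$, while by naturality of $\Phi$ in the $G$-map $\sigma_{G/G}$ we have $\Phi_{C_{n+1}(M)}\circ(\sigma_{G/G})_*=(\sigma^G)_*\circ\Phi_{C_n(M)}$, whose image lies in the $\Z/p$-span of $\{\,c\bmod p: c\in\operatorname{im}\pi_0(\sigma^G)\,\}$ and so does not contain the basis vector $\phi\bmod p$. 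Therefore $[\phi]\notin\operatorname{im}(\sigma_{G/G})_*$, and $(\sigma_{G/G})_*$ is not surjective.

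The step I expect to require the most care is the identification of $\Phi_X$, namely that $\pi_0\Phi(H\mfz)=\Z/p$ and that $\Phi_X$ is reduction mod $p$ on classes pulled back from the fixed space; this can be carried out via the isotropy-separation sequence as above, or — more concretely and perhaps more robustly — by computing directly with the Bredon chain complex of a $G$-CW structure on $C_{n+1}(M)$. In the latter approach $H_0^G(C_{n+1}(M);\mfz)$ is the cokernel of $\partial_1$ on Bredon $1$-chains, $[\phi]$ is carried by a fixed $0$-cell in the $\phi$-stratum, and the boundaries of the free $1$-cells hit the $\phi$-direction only with multiplicities divisible by $p$ (the same transfer factor responsible for $\pi_0\Phi(H\mfz)=\Z/p$), so $[\phi]$ survives in a $\Z/p$-quotient of $H_0^G(C_{n+1}(M);\mfz)$ on which every stabilized class vanishes. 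Everything else — the stratification of $C_k(M)^G$ and the action of $\sigma^G$ on $\pi_0$ — is elementary point-set topology once the marked boundary fixed point has been fixed.
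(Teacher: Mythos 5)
Your geometric-fixed-point strategy is a genuine alternative to the paper's approach, which is a direct computation with the Bredon cellular chain complex in the proof of \cref{Prop:H0Unstable}: the paper identifies $H_0^G(C(M);\mfz)$ explicitly as the $\Z[\sigma]$-module $\Z[\sigma]\{x_0,x_1,\dots,y_0\}/(px_i = \sigma^{\bullet}y_0)$ and observes that it is not finitely generated. Your route through $\pi_0\Phi^{C_p}H\mfz \cong \Z/p$ and the decomposition $C_k(M)^G = \coprod_{pa+b=k}C_a((M\smallsetminus M^G)/G)\times C_b(M^G)$ (which is \cref{Prop:FixptConfDecomp} plus \cref{Cor:CGSDecomp}) extracts essentially the same information and is correct when $p\mid n+1$.

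However, there is a genuine gap in the case $p\nmid n+1$, and the gap cannot be patched along the lines you suggest. The phrase ``$\sigma_{G/G}$ cannot produce configurations avoiding a fixed neighbourhood of the marked boundary point'' is a geometric observation that has no content at the level of $\pi_0$, which is the only thing the $\Phi^{C_p}H\mfz$ quotient sees in degree $0$: under the standing connectivity hypotheses, any two configurations in $C_{n+1}(M)^G$ with the same orbit-type counts $(a,b)$ lie in the same path component, regardless of where the points sit. When $p\nmid n+1$ every configuration in $C_{n+1}(M)^G$ has at least one $G$-fixed point, so $\pi_0(\sigma^G)\colon\pi_0(C_n(M)^G)\to\pi_0(C_{n+1}(M)^G)$ is a \emph{bijection} $(a,b)\mapsto(a,b+1)$, and the induced map on $(\Z/p)[\pi_0(-)]$ is an isomorphism. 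Pushing this through the isotropy separation sequence (the Borel term $H_0^{hC_p}(C_{n+1}(M))\cong\Z$ also maps isomorphically) shows that $(\sigma_{G/G})_*\colon H_0^G(C_n(M);\mfz)\to H_0^G(C_{n+1}(M);\mfz)$ is in fact surjective for $p\nmid n+1$. So the ``for any $n\geq 1$'' formulation is stronger than what is true, and what your method does correctly establish is non-surjectivity precisely when $p\mid n+1$.

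This is actually enough for the substantive conclusion. The statement you should aim for is the one proved in \cref{Prop:H0Unstable}: $H_0^G(C(M);\mfz)$ is not finitely generated over $\Z[(\sigma_{G/G})_*]$. Since $H_0^G(C_n(M);\mfz)$ is a finitely generated abelian group for each $n$, \cref{Prop:AlgStability} reduces finite generation to eventual stability, and your argument shows surjectivity fails for the infinitely many $n$ with $p\mid n+1$; that already rules out stabilization and hence finite generation. If you rewrite your proof to target that formulation, drop the unsupported claim for $p\nmid n+1$, and cite \cref{Prop:FixptConfDecomp} and \cref{Cor:CGSDecomp} for the fixed-point decomposition, it becomes a clean alternative to the paper's chain-level argument.
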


Although the strong form of the McDuff--Segal Theorem fails equivariantly, there is a weaker form which we can extend to the equivariant setting. Let
$$C(X) := \bigsqcup_{k \geq 0} C_k(X)$$
denote the space of all unordered configurations of finitely many points in $X$ equipped with the disjoint union topology. The nonequivariant stabilization maps \eqref{Eqn:StabIntro} give rise to a self-map
\begin{equation}\label{Eqn:BigStabIntro}
\sigma: C(X) \to C(X)
\end{equation}
which allows us to state the following corollary of the McDuff--Segal Theorem:

\begin{thm}[McDuff--Segal, Nonequivariant Stability (Finite Generation Form)]
Assume $M$ is an open, connected manifold with $\dim(M)\geq2$. Moreover, suppose that $H_d(M)$ is finitely generated as an abelian group for all $d\geq 0$. Then the homology group $H_d(C(M))$ is finitely generated as a $\z[\sigma_*]$-module for all $d\geq 0$. 
\end{thm}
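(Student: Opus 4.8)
The plan is to deduce the finite generation form from the strong form of the McDuff--Segal theorem (\cref{Thm:MS}) by an elementary direct-limit argument on each homology group in a fixed degree $d$.

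\medskip

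First I would unwind the structure of $C(M)=\bigsqcup_{k\geq0}C_k(M)$. Since homology commutes with disjoint unions, $H_d(C(M))\isom\bigoplus_{k\geq0}H_d(C_k(M))$ as abelian groups. The self-map $\sigma\colon C(M)\to C(M)$ restricts on the $k$-th summand to the usual stabilization $\sigma\colon C_k(M)\to C_{k+1}(M)$, so $\sigma_*$ sends the summand $H_d(C_k(M))$ into $H_d(C_{k+1}(M))$. Thus, as a $\z[\sigma_*]$-module, $H_d(C(M))$ is generated by $\bigoplus_{k=0}^{N}H_d(C_k(M))$ for \emph{any} $N$ such that every $H_d(C_k(M))$ with $k>N$ lies in the image of $\sigma_*$ applied to lower stages; and it suffices for this that $\sigma_*\colon H_d(C_k(M))\to H_d(C_{k+1}(M))$ be surjective for all $k\geq N$.

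\medskip

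Second, I would invoke \cref{Thm:MS}: the map $\sigma_*\colon H_d(C_k(M))\to H_d(C_{k+1}(M))$ is an isomorphism (in particular surjective) once $d\leq k/2$, i.e.\ once $k\geq 2d$. Taking $N=2d$, the module $H_d(C(M))$ is therefore generated over $\z[\sigma_*]$ by the finite direct sum $\bigoplus_{k=0}^{2d}H_d(C_k(M))$. It remains to check that each of these finitely many groups is itself a finitely generated abelian group. This is where the hypotheses ``$M$ connected open with $\dim M\geq2$'' and ``$H_*(M)$ finitely generated'' enter: one shows by induction on $k$ that $H_d(C_k(M))$ is finitely generated for all $d$ and $k$. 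The cleanest route is the well-known fibration/scanning or Fadell--Neuwirth type argument (equivalently, the stable splitting or the cofiber sequences relating $C_k(M)$ to $C_{k-1}(M)$ and configuration spaces of $M$ away from a point): since $\dim M\geq 2$, removing a point from the connected open manifold $M$ keeps it connected, and one gets that $H_d(C_k(M))$ is built from finitely many finitely generated pieces coming from the $H_*(M)$'s; alternatively cite the known computation (e.g.\ via the Cohen--Taylor / Bendersky--Gitler spectral sequence, or \cite{McD75}) that $H_*(C_k(M))$ is finitely generated whenever $H_*(M)$ is. Finitely generated abelian groups are closed under extensions and finite direct sums, so $\bigoplus_{k=0}^{2d}H_d(C_k(M))$ is finitely generated over $\z$, hence a fortiori over $\z[\sigma_*]$.

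\medskip

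The main obstacle is the last step: verifying that $H_d(C_k(M))$ is finitely generated for every $k$. The direct-limit bookkeeping in the first two paragraphs is routine, but finite generation of the individual configuration space homologies genuinely uses the manifold hypotheses ($\dim M\geq 2$ so that point-deleted manifolds stay connected, and $H_*(M)$ finitely generated so the inductive input is controlled). I would isolate this as a lemma, proving it by induction on $k$ using a Fadell--Neuwirth--style cofiber sequence together with the fact that the relevant mapping spaces/labels have finitely generated homology, or simply cite it from \cite{McD75} or a standard reference on homology of configuration spaces.
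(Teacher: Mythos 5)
Your proposal is correct and follows essentially the same route as the paper: reduce to the strong form of McDuff--Segal (\cref{Thm:MS}/\cref{Thm:HomStab}) plus finite generation of each $H_d(C_k(M))$. The paper packages the first two paragraphs of your argument as the abstract \cref{Prop:AlgStability} (direction (1) $\Rightarrow$ (2)), and the remark after the theorem states explicitly that it follows by combining \cref{Prop:AlgStability} with \cref{Prop:MtoCnM}, which is exactly the lemma you isolate as the ``main obstacle.''

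Where your sketch is looser than the paper is precisely at \cref{Prop:MtoCnM}. Your Fadell--Neuwirth induction gives finite generation of $H_*(\Conf_k(M))$ (ordered configurations), but you still need to pass through the free $\Sigma_k$-quotient to get $C_k(M)$; the paper handles this with \cref{lem:homology-orbits-finite}, a Serre spectral sequence for the Borel fibration $\Conf_k(M) \to E\Sigma_k \times_{\Sigma_k} \Conf_k(M) \to B\Sigma_k$ with local coefficients. For the ordered part, rather than Fadell--Neuwirth, the paper uses Poincar\'e duality to reduce to compactly supported cohomology $H^*_c(\Conf_k(M);\mathcal O)$ and then invokes Petersen's spectral sequence whose $E_1$-page is built from cohomology of closed submanifolds of $M^{\times k}$. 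Both routes are valid; the Fadell--Neuwirth approach requires separately checking that $H_*(M \setminus \{\text{finite set}\})$ stays finitely generated (a Mayer--Vietoris argument), which you gesture at but don't spell out. Nothing fails, but if you write this up you should either prove the $\Sigma_k$-quotient step or cite a source that gives finite generation directly for unordered configurations.
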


\begin{remark}
The assumption that $H_d(M)$ is finitely generated ensures that $H_d(C_k(M))$ is finitely generated for all $k \geq 0$ (\cref{Prop:MtoCnM}), so the finite generation form of nonequivariant stability is equivalent (cf. \cref{Prop:AlgStability}) to saying that the map \eqref{Eqn:StabIntro} is an isomorphism for all $k \gg 0$. 

We note that the philosophy of encoding stability as a finite generation condition is not new. It has become increasingly prevalent in the literature on homological stability (e.g., \cite[Theorem 1.1]{ADCK20}) and representation stability (e.g. \cite[Theorem 6.2.1]{CEF-FI}).
\end{remark}

The obvious finite generation form of equivariant stability fails since the equivariant stabilization map \eqref{Eqn:GStabIntro} does not induce isomorphisms for all $k \gg 0$. In other words, $H_d^G(C(M); \mfz)$ is \emph{not} finitely generated as a $\z[(\sigma_{G/G})_*]$-module. 

However, there is a naturally occurring polynomial ring over which $H_d^G(C(M); \mfz)$ is a finitely generated module. If $X$ is a $G$-manifold which is the interior of a $G$-manifold with boundary containing an orbit of the form $G/H$, then we can define a new equivariant stabilization map
\begin{equation}\label{Eqn:GHStabIntro}
\sigma_{G/H}: C_k(M) \to C_{k+|G/H|}(M)
\end{equation}
by ``adding an \emph{orbit} of type $G/H$ near the boundary.'' In this situation, we say $X$ is \emph{$H$-stabilizable} (\cref{Def:HStab}). 

We adopt the philosophy that since orbits are equivariant analogues of points, the equivariant analogue of stabilization maps should be the family of stabilization maps $\sigma_{G/H}$. As such, we replace the ring $\z[\sigma_*]$ by the ring 
$$P_G=\z[(\sigma_{G/H})_*: (H) \leq G]$$
of all equivariant stabilization maps. By considering all possible equivariant stabilization maps, we obtain our stability result.


\begin{thmx}[{Bredon Homological Stability, \cref{thm:B}}]\label{MT:Stability}
Let $G$ be an abelian group and $M$ a $G$-manifold that is the interior of a
compact $G$-manifold with boundary. Assume that for all $H\leq G$, $M$ is
$H$-stabilizable and $M^H$ is connected.
Then $H_d^H(C(M);\underline{\Z})$ is finitely generated over $P_G$ for all $H\leq G$.
\end{thmx}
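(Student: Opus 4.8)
The strategy is to reduce the Bredon-homological statement to a collection of ordinary (nonequivariant) homological stability statements about the fixed-point spaces $C(M)^K$ for subgroups $K\le G$, and then reassemble these via the structure of Bredon homology. Since $G$ is abelian, every subgroup is normal, so each fixed-point space $M^K$ is itself a manifold with a residual action of $G/K$; moreover $C_k(M)^K$ decomposes as a disjoint union, indexed by how an invariant configuration distributes its points among the orbit types, of products of configuration spaces of the strata of $M$. Concretely I expect $C_k(M)^K \simeq \bigsqcup C_{a}(M^K \setminus (\text{lower strata})) \times \prod_{K < L} C_{b_L}(\text{stratum}_L/\text{something})$, with the index ranging over tuples with $a + \sum_L b_L\,|G/L| = k$. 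The key geometric input is that each $\sigma_{G/H}$, being "add an orbit of type $G/H$ near a boundary point of type $G/H$," acts on $C_k(M)^K$ by adding points only to the factor corresponding to the orbit type $G/H$ (and does nothing when $H\not\le K$, or rather shifts $k$ without touching the $K$-fixed stratum when $H^K$ is empty).

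The first step is therefore to make this fixed-point decomposition precise, presumably using the hypothesis that $M^H$ is connected for all $H$ and that $M$ is $H$-stabilizable (which provides the relevant boundary orbits). The second step is to show that on each factor in the decomposition, the relevant $\sigma_{G/H}$ restricts to an ordinary McDuff--Segal stabilization map (adding a point near the boundary of that stratum), so that by the nonequivariant finite-generation form of McDuff--Segal, $H_*\big(C(M)^K\big)$ — equivalently $H_*^{G/K}$ of the $G/K$-fixed points viewed underlying — is finitely generated over $\z[(\sigma_{G/H})_*]$. This uses the standard fact (cf. \cref{Prop:AlgStability}, \cref{Prop:MtoCnM}) that homology of configuration spaces of an open connected manifold of dimension $\ge 2$ is finitely generated in each degree once $H_*(M)$ is. A dimension-counting subtlety: some strata might be low-dimensional or disconnected, and one must check the McDuff--Segal hypotheses genuinely hold for each — this is where connectedness of all $M^H$ is used, together with the observation that the strata of a $G$-manifold are again manifolds (possibly with varying dimension), and one stabilizes within each.

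The third step is to pass from ordinary homology of all fixed-point spaces to Bredon homology. Here I would use that Bredon homology $H_*^G(X;\mfz)$ with the constant Mackey functor is computed by a spectral sequence (or a finite filtration by orbit type / isotropy separation) whose input is built from the ordinary homology of the fixed-point spaces $X^K$ and their quotients $X^K/X^{>K}$, i.e. from the "geometric fixed points" data. Since finite generation over $P_G$ is preserved under the operations appearing in such a spectral sequence — subquotients, finite extensions, and the finitely many pages needed for a finite group — finite generation of each $H_*(C(M)^K)$ over $P_G$ propagates to $H_*^G(C(M);\mfz)$. The same argument run with $G$ replaced by $H$ (and $M$ by $M$ with the restricted action, which still satisfies all hypotheses) gives the statement for all $H\le G$; one must check the ring $P_G$ still acts compatibly after restriction, which it does since the maps $\sigma_{G/H'}$ with $H'\le H$ are among the generators.

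**Main obstacle.** The genuinely hard part is the third step: controlling how finite generation over the \emph{polynomial ring} $P_G$ (not just over $\z$) interacts with the Bredon spectral sequence, and in particular verifying that the differentials and extension problems are maps of $P_G$-modules so that finite generation is inherited. One must set up the $P_G$-action functorially on the whole isotropy-separation tower, show each layer's homology is a finitely generated $P_G$-module (using Step 2 on the strata, with the correct identification of which generator acts on which stratum — including the subtle point that $\sigma_{G/H}$ may act with "degree shift but trivially" on a stratum where it doesn't insert points, which still makes that stratum's homology finitely generated over $P_G$ via a different generator or via $\z\subseteq P_G$ if that stratum is bounded), and finally invoke that over a Noetherian ring finite generation passes through bounded spectral sequences. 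A secondary subtlety is ensuring the stabilization maps assemble into an honest $P_G$-action (commutativity of the various $\sigma_{G/H}$, at least up to homotopy, on $C(M)$), which should follow from disjointness of the supports of the added orbits near the boundary.
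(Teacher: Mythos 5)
Your plan follows essentially the same route as the paper: decompose the fixed-point spaces $C(M)^K$ by orbit type into products of ordinary configuration spaces of strata, apply McDuff--Segal to each factor, and propagate finite generation through the isotropy-separation filtration using that $\Phi^K H\mfz$ is a generalized Eilenberg--MacLane spectrum. In the paper these steps appear as \cref{Prop:FixptConfDecomp} and \cref{Cor:CGSDecomp} (giving $C_n(M)^G \cong \coprod_{|S|=n} C_S^G(M)$ and $C_S^G(M)\cong\prod_{(H)}C_{k_{(H)}}(M_{(H)}/G)$; note the factors are indexed by subgroups $H\leq K$, one per $K$-orbit type, not by overgroups $L>K$, and the quotient in your sketch is by $K$), as \cref{Thm:CGSHomStab}, and as \cref{Lemma:Bredon-to-PhiG} together with \cref{Prop:Green}.

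There is, however, a genuine gap in the step where you pass from a $P_H$-module structure to a $P_G$-module structure. You assert that $P_G$ acts compatibly on $H^H_*(C(M))$ ``since the maps $\sigma_{G/H'}$ with $H'\leq H$ are among the generators.'' This is not so: the ring acting tautologically on $H^H_*(C(M))$ is $P_H=\Z[\sigma_{H/H'}:H'\leq H]$, and the $P_G$-action is via the restriction homomorphism $\res^G_H:P_G\to P_H$, which does \emph{not} hit the generators. By the double coset formula, when $G$ is abelian and $H'\leq H$ one has $\res^G_H(\sigma_{G/H'})=\sigma_{H/H'}^{[G:H]}$ --- a nontrivial power, not $\sigma_{H/H'}$ itself. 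Hence a finitely generated $P_H$-module need not be finitely generated over $P_G$ without further argument. The paper fills this hole with the integrality statement of \cref{lem:RK-to-RG}, and this is the principal place the abelian (more generally Dedekind) hypothesis enters (\cref{rmk:abelian}); without it the argument cannot conclude for proper $H<G$. A smaller omission: you treat finite generation of $H_*(M_{(H)}/G)$ as if it followed from \cref{Prop:MtoCnM}, but this requires the nontrivial Mayer--Vietoris induction of \cref{lem:fg-homology}, which again uses both the abelian hypothesis and compactness of the ambient $G$-manifold.
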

For example, these hypotheses are satisfied in the case where $M$ is a sum of
copies of the regular representation (see Example \ref{ex:rho}).

\begin{remark}
See Lemma \ref{lem:RK-to-RG} for the main reason for the abelian hypothesis, and
Remark \ref{rmk:abelian} for a mild relaxation of this hypothesis.
Group-theoretic hypotheses are also used in Lemma
\ref{lem:hypothesis-translation} and \ref{lem:fg-homology}.
\end{remark}

\begin{remark}
We discuss two natural extensions of \cref{MT:Stability} in \cref{Sec:Gen}: in \cref{Thm:ROG}, we show that stability holds for $RO(G)$-graded Bredon homology, and in \cref{Thm:MF}, we show that stability holds for Mackey functor-valued Bredon homology. The $RO(G)$-graded Bredon homology of \emph{ordered} configuration spaces of $G$-manifolds will also be studied in forthcoming work of Dugger and Hazel \cite{DH23}. 
\end{remark}

\subsection{Motivation}

Before discussing the proofs of \cref{MT:BredonFails} and \cref{MT:Stability}, we discuss some motivating ideas and directions for future work.

\subsubsection{Equivariant loop spaces and Dyer--Lashof operations}

Nonequivariantly, May's recognition principle \cite{May72} implies that $n$-fold loop spaces are equivalent to algebras over the $E_n$-operad $\ce_n$ whose $k$-th space is $C_k(\r^n)$. Using Cohen's computation of $H_*(C_k(\r^n))$ in \cite[Ch. III]{CLM76}, one can then define Dyer--Lashof operations on the mod $p$ homology of any $n$-fold loop space. 

In \cite{GM17}, Guillou and May proved an equivariant analogue of the recognition principle: if $V$ is a real orthogonal $G$-representation, then $V$-fold loop spaces are equivalent to algebras over the $E_V$-operad $\ce_V$ whose $k$-th space is $C_k(V)$. They remark that in contrast with the nonequivariant situation, very little is known about $H_d^G(C_k(V))$. \cref{MT:Stability} gives the first step toward a more systematic analysis of these Bredon homology groups which could shed light on equivariant Dyer--Lashof operations (cf. \cite{Wil19} for $G=C_2$). We verify that the hypotheses of \cref{MT:Stability} are satisfied in the case of sums of regular representations in \cref{ex:rho}.

\subsubsection{Equivariant diffeomorphism groups of smooth $G$-manifolds}

Configuration spaces of ordinary manifolds appear in the study of diffeomorphism groups of smooth manifolds via the ``decoupling theorems" of B{\"o}digheimer--Tillmann \cite{BT01} and Bonatto \cite{Bon22}. It would be interesting to see if the configuration spaces of $G$-manifolds arise analogously in the equivariant diffeomorphism groups of smooth $G$-manifolds.

\subsection{Overview of key ideas}

We now summarize the key ideas going into the proofs of \cref{MT:BredonFails} and \cref{MT:Stability}. 

\subsubsection{Equivariant stabilization maps}

A map between $G$-spaces induces a map on Bredon homology only if it is $G$-equivariant, so we must be careful about how we add points. One natural option is to add a \emph{fixed} point near infinity (provided our space has such a fixed point), which yields the equivariant stabilization map \eqref{Eqn:FPGIntro} mentioned above with underlying map \eqref{Eqn:StabIntro}. However, \cref{MT:BredonFails} shows that this map does not produce isomorphisms in Bredon homology after iteration. 

One of our key observations is that under suitable hypotheses on the manifold $M$, there is a stabilization map \eqref{Eqn:GStabIntro} for each subgroup $H \leq G$. \cref{MT:Stability} says that the Bredon homology stabilizes when all of these equivariant stabilization maps are taken into account simultaneously. 

\begin{remark}
The stabilization map corresponding to $H \leq G$ inserts an orbit of the form $G/H$, i.e., inserts $|G/H|$ points at once. Other stabilization maps which add multiple points at once have appeared in the context of higher order representation stability \cite{MW19}.
\end{remark}

\subsubsection{Formulating stability in terms of finite generation}

While none of the orbit stabilization maps \eqref{Eqn:GStabIntro} yield Bredon homological stability for a fixed $H$ (\cref{MT:BredonFails}), the collection of maps $\{ \sigma_{G/H} : (H) \leq G\}$ yield a form of stability (\cref{MT:Stability}). To make this precise, we translate homological stability from a statement about maps in homology to a statement about finite generation over a polynomial ring (\cref{Prop:AlgStability}). 

\subsubsection{Reduction from Bredon homology to singular homology of fixed points}
The analogue of cellular homology in the context of $G$-spaces is \emph{Bredon homology},
$H^G_d(-)$ (cf. \cite[XIII.4.1]{May96}, Section \ref{SS:Bredon}). The Bredon
homology of a $G$-space is not simply the homology of its fixed points, but in
Section \ref{Sec:Eqvt} we use equivariant stable homotopy theory to relate these
notions. The main lemma enabling this comparison is
\cref{Lemma:Bredon-to-PhiG}, together with statements in Section \ref{SS:GEM} relating the geometric localization of Bredon homology to singular homology.

\subsubsection{Expressing fixed points in terms of nonequivariant configuration spaces}

We prove that the fixed points $C_n(M)^G$ satisfy homological stability by reducing to \cref{Thm:MS}. To do so, we express $C_n(M)^G$ in terms of configuration spaces of other manifolds using the notion of \emph{unordered $S$-configurations}. We note that we are not the first to consider equivariant configuration spaces (cf. \cite{RS00}), but we use a new convention here.

Let $X$ be a $G$-space and let $S$ be a finite $G$-set. We define the \emph{space of unordered $S$-configurations in $X$} (\cref{Def:CGS}) by
$$C_S^G(X) := \Emb^G(S,X)/\Aut(S).$$
When $M$ is $H$-stabilizable, the equivariant stabilization map \eqref{Eqn:GHStabIntro} restricts to a stabilization map 
$$\sigma_{G/H} : C_S^G(M) \to C_{S + G/H}^G(M),$$
where $S + G/H$ denotes the disjoint union of $S$ and $G/H$ as $G$-sets.

\begin{example}
If $S$ is a trivial $G$-set of cardinality $k$, then we have that
$$C_{S}^G(X) = C_k(X^G)$$
is the space of unordered configurations of $k$ points in the $G$-fixed points of $X$. If $X$ is a $G$-stabilizable $G$-manifold, the stabilization map
$$\sigma_{G/G} : C_{S}^G(X) \to C_{S + [G/G]}^G(X)$$
is the classical stabilization map
$$\sigma: C_k(X^G) \to C_{k+1}(X^G).$$
\end{example}

Passing through the $S$-configuration spaces $C_S^G(X)$ allows us to identify the fixed points $C_n(X)^G$ in terms of ordinary configuration spaces:

\begin{thmx}[{\cref{Prop:FixptConfDecomp} and \cref{Cor:CGSDecomp}}]\label{MT:CtoCSG}
\
\begin{enumerate}
\item For a $G$-space $X$ and $n\geq 0$, there is a disjoint union decomposition
$$ C_n(X)^G \cong \coprod_{|S|=n} C^G_S(X), $$
where the disjoint union runs over isomorphism classes of finite $G$-sets $S$ with cardinality $n$.

\item Let $S$ be the finite $G$-set
$$S = \bigsqcup_{(H)} k_{(H)}[G/H],$$
where $k_{(H)} \geq 0$ for each conjugacy class of subgroups $(H)$. Then
$$C_S^G(X) \cong \prod_{(H)} C_{k_{(H)}}(X_{(H)}/G),$$
where $X_{(H)}$ is the subspace of $X$ consisting of points with stabilizer conjugate to $(H)$ (\cref{Def:X_(H)}). 
\end{enumerate}
\end{thmx}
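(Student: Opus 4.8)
Both parts are formal, so the plan is to unwind the definitions while carefully tracking the point-set topology; the only genuine content is the wreath-product combinatorics in part~(2). For part~(1), I would reinterpret a $G$-fixed configuration as an equivariant injection from an abstract finite $G$-set. Writing $C_n(X)=\Emb(\underline n,X)/\Sigma_n$ with $\underline n=\{1,\dots,n\}$, a class $[\phi]$ is $G$-fixed exactly when for every $g\in G$ there is a permutation $\rho(g)\in\Sigma_n$ with $g\cdot\phi=\phi\circ\rho(g)$; injectivity of $\phi$ makes $\rho(g)$ unique, and $g\mapsto\rho(g)$ is then a homomorphism $G\to\Sigma_n$ — a $G$-set structure on $\underline n$, which I will write $\underline n_\rho$ — for which $\phi$ is equivariant. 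Hence the preimage of $C_n(X)^G$ under the (open) quotient map $q\colon\Emb(\underline n,X)\to C_n(X)$ is $\bigsqcup_{\rho\colon G\to\Sigma_n}\Emb^G(\underline n_\rho,X)$, a genuinely disjoint union since each injection is equivariant for at most one $\rho$ and there are finitely many $\rho$. As $q$ is open and $C_n(X)^G$ is $\Sigma_n$-saturated, $C_n(X)^G$ carries the quotient topology from this disjoint union by the residual $\Sigma_n$-action, which permutes the summands by conjugating $\rho$ and relabels embeddings. The $\Sigma_n$-stabilizer of a chosen $\rho_0$ with $\underline n_{\rho_0}=S$ is exactly $\Aut(S)\subseteq\Sigma_n$, so its $\Sigma_n$-orbit contributes $\Sigma_n\times_{\Aut(S)}\Emb^G(S,X)$, which becomes $\Emb^G(S,X)/\Aut(S)=C_S^G(X)$ after the $\Sigma_n$-quotient; summing over isomorphism classes of $G$-sets $S$ with $|S|=n$ gives the decomposition.

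For part~(2), I would decompose $\Emb^G(S,X)$ along the orbit decomposition $S=\bigsqcup_{(H)}k_{(H)}[G/H]$. An equivariant map out of a disjoint union is a tuple of equivariant maps out of the orbits, and such a tuple is globally injective iff each component embeds its orbit and the images of distinct orbits are disjoint; since orbits of non-conjugate types automatically have disjoint images, this identifies $\Emb^G(S,X)$ with the subspace of $\prod_{(H)}\Emb^G(G/H,X)^{k_{(H)}}$ on which, within each type $(H)$, the $k_{(H)}$ chosen orbits are pairwise distinct. Next, an equivariant embedding $G/H\hookrightarrow X$ is determined by the image of $eH$, which is forced to have stabilizer \emph{exactly} $H$; this identifies $\Emb^G(G/H,X)$ with $\{x\in X: G_x=H\}$, a free $W_G(H)$-space for the Weyl group $W_G(H):=N_G(H)/H$, whose $W_G(H)$-orbit space I claim is $X_{(H)}/G$ (see the next paragraph). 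Finally $\Aut(S)=\prod_{(H)}\bigl(W_G(H)\wr\Sigma_{k_{(H)}}\bigr)$, since an automorphism preserves each orbit type, permutes the $k_{(H)}$ copies of $G/H$, and acts on each through $\Aut_G(G/H)=W_G(H)$. Then I would take the quotient in two stages: quotienting $\Emb^G(S,X)$ by the base $\prod_{(H)}W_G(H)^{k_{(H)}}$ replaces each $\Emb^G(G/H,X)$ by $X_{(H)}/G$ and turns ``pairwise disjoint images'' into ``pairwise distinct coordinates'', producing $\prod_{(H)}\widetilde C_{k_{(H)}}(X_{(H)}/G)$; quotienting by the residual $\prod_{(H)}\Sigma_{k_{(H)}}$ then yields $\prod_{(H)}C_{k_{(H)}}(X_{(H)}/G)$.

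The identification $\{x\in X: G_x=H\}/W_G(H)\cong X_{(H)}/G$, which I would prove by noting that every $G$-orbit of type $G/H$ in $X_{(H)}$ meets $\{x:G_x=H\}$ in a single free $N_G(H)/H$-orbit, is where the distinction between ``stabilizer conjugate to $H$'' and ``stabilizer equal to $H$'' gets absorbed. I do not expect a deep obstacle: the real work is point-set bookkeeping — checking that each set-level bijection above is a homeomorphism, i.e.\ that the subspace, quotient, and disjoint-union topologies are mutually compatible (this rests only on quotients by finite groups being open maps and on the relevant partitions being into finitely many clopen pieces, which is where one uses that $X$ is suitably separated) — together with the combinatorial care in part~(2) needed when some $k_{(H)}\geq2$, so that $\Aut(S)$ is a true wreath product and one must verify that quotienting first by $W_G(H)^{k_{(H)}}$ and then by $\Sigma_{k_{(H)}}$ lands on the \emph{unordered} configuration space $C_{k_{(H)}}(X_{(H)}/G)$ rather than some intermediate quotient.
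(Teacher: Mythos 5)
Your proof is correct, and it takes a genuinely different route from the paper on part~(1). To see that the $\Sigma_n$-saturated preimage $q^{-1}(C_n(X)^G)\subseteq\Conf_n(X)$ is the topological disjoint union of the pieces $\Emb^G(\underline n_\rho,X)$, you observe that each piece is cut out of $\Conf_n(X)\subseteq X^n$ by the finitely many equalizer conditions $g\cdot\phi(i)=\phi(\rho(g)(i))$, hence is closed when $X$ is Hausdorff, and a finite partition into closed sets is automatically a clopen partition; the residual $\Sigma_n$-action then permutes the pieces by conjugating $\rho$, with stabilizer $\Aut(\underline n_{\rho_0})=\Aut(S)$, and quotienting contributes $(\Sigma_n\times_{\Aut(S)}\Emb^G(S,X))/\Sigma_n\cong C^G_S(X)$ per isomorphism class. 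The paper's \cref{Prop:FixptConfDecomp} instead shows each piece is \emph{open} via the equivariant slice theorem, which is why it is stated and proved only for $G$-manifolds $M$; your closed-equalizer argument is more elementary and applies to any Hausdorff $G$-space, which actually matches the generality of the theorem statement more closely than the paper's own proof does. For part~(2), the paper works at the unordered level: \cref{Prop:CGOrbits} sends a $G$-invariant subset of $X_{(H)}$ directly to its image in $X_{(H)}/G$, and \cref{Lem:CGProduct} (stated without proof) assembles across non-isomorphic orbit types. You instead work with ordered embeddings, identify $\Aut(S)\cong\prod_{(H)}\bigl(W_GH\wr\Sigma_{k_{(H)}}\bigr)$, and take a two-stage quotient. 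These are the same picture organized differently: yours is more explicit and makes the role of $\Aut(S)$ transparent, while the paper's subset-level formulation bypasses the wreath-product bookkeeping. Your auxiliary identification $\{x:G_x=H\}/W_GH\cong X_{(H)}/G$, justified by each $G/H$-type orbit meeting $\{x:G_x=H\}$ in a single free $W_GH$-orbit, is the content of the paper's \cref{Lem:modW}.
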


We apply part (1) of \cref{MT:CtoCSG} to prove \cref{MT:BredonFails}; the essential point is that when $G$ is nontrivial, the number of summands appearing in $C_n(X)^G$ increases as $n$ increases, so the stabilization map cannot be surjective on homology. We apply parts (1) and (2) to prove \cref{MT:CSGStability}.

\subsubsection{Stability for $S$-configurations and Bredon homology}

Since they can be expressed in terms of ordinary configuration spaces, the spaces of $S$-configurations often satisfy an analogue of the McDuff--Segal theorem:

\begin{thmx}[{\cref{Thm:CGSHomStab}}]\label{MT:CSGStability}
Let $M$ be a $G$-manifold and $S$ a finite $G$-set containing $k$
copies of the orbit
$[G/H]$ for a conjugacy class of subgroups $(H)$. If $M$ is $H$-stabilizable and
$M_{(H)}/G$ is connected, then the $(H)$-stabilization map of \eqref{eq:sigmaGH} induces an isomorphism
$$ (\sigma_{G/H})_*: H_d(C^G_S(M)) \to H_d(C^G_{S+[G/H]}(M)) $$
in integral homology in degrees $d\leq k/2$.
\end{thmx}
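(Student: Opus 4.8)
The plan is to deduce \cref{MT:CSGStability} directly from part (2) of \cref{MT:CtoCSG} together with the nonequivariant McDuff--Segal theorem (\cref{Thm:MS}) and a K\"unneth argument. First I would write the given finite $G$-set as $S = k[G/H] + S'$, where $S'$ is the complementary part with no copies of the orbit $[G/H]$ (i.e. all its orbit-types are conjugacy classes $(K)\neq(H)$); then $S + [G/H] = (k+1)[G/H] + S'$. Applying \cref{MT:CtoCSG}(2) to both sides yields homeomorphisms
$$ C^G_S(M)\;\cong\; C_k(M_{(H)}/G)\times \prod_{(K)\neq(H)} C_{k_{(K)}}(M_{(K)}/G), \qquad C^G_{S+[G/H]}(M)\;\cong\; C_{k+1}(M_{(H)}/G)\times \prod_{(K)\neq(H)} C_{k_{(K)}}(M_{(K)}/G), $$
and under these identifications $\sigma_{G/H}$ is the map $\sigma\times\id$ where $\sigma\colon C_k(M_{(H)}/G)\to C_{k+1}(M_{(H)}/G)$ is the classical stabilization map and $\id$ is the identity on the second factor. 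I would need to check this compatibility carefully — that the orbit-insertion stabilization map, after passing through the decomposition, really is the nonequivariant "add a point near the boundary" map on the $C_\bullet(M_{(H)}/G)$ factor and does nothing elsewhere — but this should follow from the construction of $\sigma_{G/H}$ in \cref{SS:GConf} and the naturality of the decomposition in \cref{MT:CtoCSG}(2).

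Next I would verify the hypotheses needed to apply \cref{Thm:MS} to $C_\bullet(M_{(H)}/G)$. The theorem requires the base to be the interior of a manifold with nonempty boundary. Since $M$ is $H$-stabilizable, $M$ is the interior of a compact $G$-manifold whose boundary contains an orbit of type $G/H$; the subspace $M_{(H)}$ of points with stabilizer conjugate to $(H)$, and hence its quotient $M_{(H)}/G$, inherits the structure of the interior of a manifold with nonempty boundary (the boundary piece coming from that $G/H$-orbit on $\partial\overline{M}$). This is the place where I expect to have to be somewhat careful: one must check that $M_{(H)}$ is itself a (possibly disconnected, but by the connectedness hypothesis on $M_{(H)}/G$, essentially well-behaved) manifold-with-boundary of the interior type, so that McDuff--Segal genuinely applies. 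The connectedness of $M_{(H)}/G$ is assumed precisely so that $C_\bullet(M_{(H)}/G)$ is the kind of configuration space to which \cref{Thm:MS} gives a clean stability range.

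Granting this, \cref{Thm:MS} gives that $\sigma_*\colon H_d(C_k(M_{(H)}/G))\to H_d(C_{k+1}(M_{(H)}/G))$ is an isomorphism for $d\le k/2$. Finally I would feed this into the K\"unneth theorem for the product with $Y := \prod_{(K)\neq(H)} C_{k_{(K)}}(M_{(K)}/G)$. Since we are computing integral homology and $Y$ is a CW complex, the K\"unneth spectral sequence (or, if one wants to avoid $\Tor$ terms, a cellular-filtration argument) shows that $(\sigma\times\id)_* = \sigma_*\otimes \id$ is an isomorphism on $H_d$ in the same range $d\le k/2$: an isomorphism in each homological degree of the first factor up through degree $d$ induces an isomorphism on the total degree-$d$ homology of the product, because every $\Tor_i$ or tensor contribution to $H_d$ of the product involves $H_j$ of $C_\bullet(M_{(H)}/G)$ with $j\le d\le k/2$. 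This yields the desired isomorphism $(\sigma_{G/H})_*\colon H_d(C^G_S(M))\to H_d(C^G_{S+[G/H]}(M))$ for $d\le k/2$. The main obstacle, as indicated, is the bookkeeping: correctly matching $\sigma_{G/H}$ with $\sigma\times\id$ under the decomposition, and confirming that $M_{(H)}/G$ satisfies the open-manifold hypothesis of \cref{Thm:MS}; the homological algebra at the end is routine.
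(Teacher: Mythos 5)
Your proposal follows essentially the same approach as the paper's proof of \cref{Thm:CGSHomStab}: splitting $S = k[G/H]\sqcup S'$, applying \cref{Cor:CGSDecomp} to reduce to $C_k(M_{(H)}/G)\times C^G_{S'}(M)$, invoking McDuff--Segal on the first factor, and concluding via the K\"unneth exact sequence and the five lemma. The two "bookkeeping" points you flag as needing care — that $M_{(H)}/G$ is an open smooth manifold so that the classical stabilization map is defined, and that $\sigma_{G/H}$ descends through the decomposition to the classical $\sigma\times\id$ — are precisely what the paper handles with \cref{lem:M_(H)/G-open-manifold} and \cref{Lem:sigma-comparison}.
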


As explained above, \cref{MT:Stability} follows from stability for the fixed points $C_n(M)^G$. \cref{MT:CtoCSG} allows us to express these fixed points in terms of $S$-configurations, and \cref{MT:CSGStability} states that the equivariant stabilization map $\sigma_{G/H}$ from \eqref{Eqn:GHStabIntro} induces an isomorphism in the homology of $C_S^G(M)$ when $S$ contains sufficiently many copies of $[G/H]$. In particular, once $S$ contains sufficiently many copies of $[G/K]$ for \emph{any} conjugacy class of subgroups $(K)$, there is some equivariant stabilization map which induces an isomorphism on homology. Roughly speaking, \cref{MT:Stability} then follows from the fact that there are finitely many finite $G$-sets $S$ for which none of the equivariant stabilization maps induce isomorphisms in homology. 

\subsection{Outline}

In \cref{Sec:Eqvt}, we recall the equivariant homology theory (Bredon homology) we work with throughout the paper. We also recall the isotropy separation sequence and prove some key results (\cref{Lemma:Bredon-to-PhiG}, \cref{Prop:Green}) needed to reduce from the equivariant to nonequivariant setting. 

In \cref{Sec:Conf}, we recall configuration spaces, stabilization maps, and their equivariant analogues. We then introduce the more refined spaces of $S$-configurations and use them to prove \cref{MT:CtoCSG} which identifies the fixed points of the configuration spaces of $G$-manifolds with (sums of products of) configuration spaces in nonequivariant manifolds. 

In \cref{Sec:Stab}, we prove \cref{MT:BredonFails},  \cref{MT:Stability}, and \cref{MT:CSGStability}, using the results of the previous section to reduce to classical homological stability results.

In \cref{Sec:Gen}, we extend our results to the $RO(G)$-graded and Mackey functor-valued settings. 

\subsection{Conventions}

Throughout, $G$ denotes a finite group. If $M$ is an abelian group, then $\uM$ denotes the constant Mackey functor on $M$. 
In this work, ``manifold'' means ``smooth manifold.''

\subsection{Acknowledgments}

The authors thank Dan Dugger, Jeremy Hahn, Ben Knudsen, Irakli Patchkoria, Oscar Randal-Williams, Bridget Schreiner, Inna Zakharevich, and Mingcong Zeng for helpful discussions.
The first author was partially supported by NSF grant DMS-2204357.
The second author was partially supported by NSF grants DMS-2039316 amd DMS-2314082, as well as an AMS-Simons Travel Grant. The second author also thanks the Max Planck Institute in Bonn for providing a wonderful working environment and financial support during the beginning of this project.

\section{Equivariant homotopy theory}\label{Sec:Eqvt}

In this section, we discuss the ideas from equivariant homotopy theory which we will need in the sequel. \cref{SS:Bredon} contains a rapid recollection of Bredon homology with coefficients in a Mackey functor, which is the central equivariant homology theory we use throughout this work. \cref{SS:GFP} and \cref{SS:PHO} recall two standard constructions from equivariant stable homotopy theory: geometric fixed points and proper homotopy orbits. These auxiliary constructions appear in the isotropy separation sequence, which we recall in \cref{SS:ISS}. Our main technical result, \cref{Lemma:Bredon-to-PhiG}, applies the isotropy separation sequence to pass from Bredon homology to a simpler nonequivariant homology theory. This simpler theory is identified as a sum of singular homology theories in \cref{SS:GEM}. 

\subsection{Bredon homology}\label{SS:Bredon}

Bredon homology associates to a $G$-space a sequence of Mackey functors (see \cite[\S IX.4]{May96}). As in nonequivariant homotopy theory, Bredon homology with coefficients in a Mackey functor $\uM$ is representable in the equivariant stable homotopy category. By \cite[Theorem XIII.4.1]{May96}, there is an Eilenberg--MacLane $G$-spectrum $H\uM$ with the property that its homotopy Mackey functors satisfy $\upi_0(H\uM) \cong \uM$ and $\upi_i(H\uM)=0$ for $i>0$. If $X$ is a $G$-space, then we have
$$\uH_*^{(-)}(X;\uM) \cong \upi_*(\Sigma^\infty_G X \wedge H\uM),$$
where $\Sigma^\infty_G X$ is the \emph{suspension $G$-spectrum of $X$}, and $- \wedge - $ is the smash product of $G$-spectra. 

The Bredon homology of a $G$-CW complex can equivalently be defined using an equivariant analogue of the cellular chain complex \cite[\S I.4]{May96}. A \emph{coefficient system} is a contravariant functor $\co_G \to \Ab$, where $\co_G$ is the \emph{orbit category} of $G$. If $X$ is a $G$-CW complex, we may define a coefficient system by
$$\uC_n(X)(G/H) := \uH_n(X^n,X^{n-1}; \z)(G/H) := H_n((X^n)^H,(X^{n-1})^H; \z).$$
The connecting homomorphisms for the triples $((X^n)^H, (X^{n-1})^H, (X^{n-2})^H)$ define a map of coefficient systems
$$d: \uC_n(X) \to \uC_{n-1}(X).$$
If $\uM$ is a \emph{Mackey functor}, we may define cellular chains with coefficients in $\uM$ by
$$C_n^G(X;\uM) := \uC_n(X) \otimes \uM := \bigoplus_{G/H \in \co_G} \left( \uC_n(X)(G/H) \otimes \uM(G/H) \right)/\sim,$$
where $f^*c\otimes m \sim c \otimes f_*m$ for a map $f: G/H \to G/K \in \co_G$ and elements $c \in \uC_n(X)(G/H)$ and $m \in \uM(G/H)$. This is a chain complex of abelian groups, with differential given by $\partial = d \otimes 1$. By forgetting structure, we may analogously define 
$$C_n^H(X;\uM) := \uC_n(i^*_HX) \otimes i^*_H \uM$$
for each subgroup $H < G$; these assemble into a chain complex of coefficient systems whose homology
$$\uH_*^{(-)}(X;\uM) := H_*(C_n^{(-)}(X;\uM))$$
is the \emph{Bredon homology of $X$ with coefficients in $\uM$}. 

For much of the paper, we will be concerned with $\Z$-graded Bredon homology \emph{groups}. By this, we mean the $G/G$-levels $\uH_*^{G}(X;\uM)$ of the Bredon homology Mackey functors constructed here. We return to the level of generality of Bredon homology Mackey functors in \cref{Sec:Gen}. It is also possible to extend Bredon homology with coefficients in a Mackey functor from a $\Z$-graded collection of groups to a $RO(G)$-graded collection of groups, where $RO(G)$ denotes the representation ring of $G$ \cite[\S IX.5]{May96}. We review this extension in \cref{Sec:Gen} as well. 

\subsection{Geometric fixed points}\label{SS:GFP}

As outlined in the previous section, the Bredon homology groups of a $G$-space $X$ are the (categorical) fixed points of the $G$-spectrum $\sus^\infty_G X\sma H\uM$. The \emph{geometric fixed points} construction is related to categorical fixed points (in a way that will be made precise in \cref{SS:ISS}), but has properties that make it more amenable to computation. We refer the reader to \cite[Sec. 2.5]{HHR16} for proofs of the results in this section. 

\begin{definition} \label{Def:PhiG}
Let $G$ be a finite group and let $X$ be a $G$-spectrum. Let $\mathcal{P}$ be the family of proper subgroups of $G$, and $E\mathcal{P}$ be the unique $G$-homotopy type such that
$$ E\mathcal{P}^H \simeq \begin{cases} \emptyset & H=G, \\ * & H<G. \end{cases} $$
Consider the collapse map $E\mathcal P_+\to S^0$ and denote by $\widetilde{E\mathcal P}$ its cofiber. The \emph{geometric fixed points} of $X$ are given by
$$\Phi^GX = (\widetilde{E\mathcal{P}} \wedge X)^G. $$
\end{definition}

The key properties of the geometric fixed points functor are that it commutes with taking suspension spectra and is strong symmetric monoidal. These properties are the content of the following propositions. 

\begin{proposition}
For any $G$-space $X$, there is an equivalence of spectra
$$\Phi^G \Sigma^\infty_G X \simeq \Sigma^\infty(X^G).$$
\end{proposition}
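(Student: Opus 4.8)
The plan is to reduce to the case where $X$ is a $G$-CW complex (both sides of the claimed equivalence send $G$-weak equivalences of spaces to equivalences of spectra, so equivariant CW approximation loses nothing), to move the smash product with $\widetilde{E\mathcal P}$ inside the suspension-spectrum functor, and then to compute the categorical $G$-fixed points via the tom Dieck splitting. The only genuinely geometric input will be the elementary computation of the fixed-point spaces of $\widetilde{E\mathcal P}$ itself.

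In detail: since $\Sigma^\infty_G$ is strong symmetric monoidal, $\widetilde{E\mathcal P}\wedge\Sigma^\infty_G X\simeq\Sigma^\infty_G(\widetilde{E\mathcal P}\wedge X)$, where on the right $\widetilde{E\mathcal P}$ denotes the based $G$-space. Hence $\Phi^G\Sigma^\infty_G X\simeq(\Sigma^\infty_G Y)^G$ for $Y=\widetilde{E\mathcal P}\wedge X_+$, and I would apply the tom Dieck splitting
\[
(\Sigma^\infty_G Y)^G\;\simeq\;\bigvee_{(H)\leq G}\Sigma^\infty\bigl(E(W_GH)_+\wedge_{W_GH}Y^H\bigr),
\]
where the wedge runs over conjugacy classes of subgroups and $W_GH=N_GH/H$ is the Weyl group. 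Now $Y^H=(\widetilde{E\mathcal P})^H\wedge(X^H)_+$; for $H<G$ the collapse map $(E\mathcal P_+)^H\to S^0$ is a weak equivalence because $E\mathcal P^H\simeq *$, so $(\widetilde{E\mathcal P})^H\simeq *$ and the corresponding summand vanishes, while for $H=G$ we have $(\widetilde{E\mathcal P})^G=S^0$ (the cofiber of $*\hookrightarrow S^0$) and $W_GG$ is trivial, so the single surviving summand is $\Sigma^\infty((X^G)_+)=\Sigma^\infty(X^G)$. As every step is natural in $X$, this produces a natural equivalence $\Phi^G\Sigma^\infty_G X\simeq\Sigma^\infty(X^G)$.

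Two alternatives are worth noting. First, without invoking tom Dieck one can observe that $X\mapsto\Phi^G\Sigma^\infty_G X$ and $X\mapsto\Sigma^\infty(X^G)$ both preserve homotopy colimits of based $G$-CW complexes ($\Sigma^\infty_G$ and $\Phi^G$ are left adjoints, and $(-)^G$ commutes with wedges, with pushouts along cofibrations, with filtered colimits, and with smashing by spheres with trivial action), are related by a natural transformation (most transparently read off in a point-set model, via the collapse $S^0\to\widetilde{E\mathcal P}$), and agree on the cells $G/H_+$ by the same computation as above; one then bootstraps over skeleta. Second, in a point-set model such as orthogonal $G$-spectra the identification is essentially immediate from the construction of $\Phi^G$, which is the route taken in \cite[Sec.~2.5]{HHR16}. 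In every version the step requiring care is the passage to categorical fixed points: this functor commutes neither with smash products nor with the formation of suspension spectra, and it is precisely to control it that the tom Dieck splitting (or a point-set model of $\Phi^G$) must be brought in; once it is available, the proposition reduces to the trivial observation that $(\widetilde{E\mathcal P})^G=S^0$ while $(\widetilde{E\mathcal P})^H\simeq *$ for $H<G$.
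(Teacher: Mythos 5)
The paper does not prove this proposition; it simply cites \cite[Sec.~2.5]{HHR16} for all results in that subsection, so there is no in-paper argument to compare against. Your proof is correct and complete: $\Sigma^\infty_G$ is strong monoidal, so $\Phi^G\Sigma^\infty_G X\simeq(\Sigma^\infty_G(\widetilde{E\mathcal P}\wedge X_+))^G$; the tom Dieck splitting (in its finite-group form, with no adjoint-representation twist) applies because $\widetilde{E\mathcal P}\wedge X_+$ has the $G$-homotopy type of a $G$-CW complex; and the fixed-point computation $(\widetilde{E\mathcal P})^H\simeq *$ for $H<G$, $(\widetilde{E\mathcal P})^G=S^0$ correctly kills every summand except the $H=G$ one. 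This is a genuinely different route from the one the paper implicitly points to: in \cite{HHR16} (and most point-set treatments via orthogonal $G$-spectra), the identification $\Phi^G\Sigma^\infty_G\simeq\Sigma^\infty(-)^G$ is essentially built into the construction of the monoidal geometric fixed-points functor and requires no computation, whereas your argument takes the homotopy-theoretic definition $\Phi^G=(\widetilde{E\mathcal P}\wedge-)^G$ at face value and pays for it by invoking the tom Dieck splitting to control categorical fixed points. The trade-off is standard: the tom Dieck route is more illuminating about why only the $H=G$ stratum survives and stays entirely within the homotopy category, while the point-set route is shorter but model-dependent; you yourself flag this accurately in your closing remarks. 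One minor imprecision in your ``alternative'' paragraph: calling $\Phi^G$ a left adjoint as the reason it preserves homotopy colimits is not quite right ($(-)^G$ is a right adjoint); the correct justification is that smashing with $\widetilde{E\mathcal P}$ is a smashing localization and $\Phi^G$ is that localization followed by an equivalence onto its essential image, which is why it preserves colimits. This does not affect your main argument.
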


\begin{proposition}
For any two $G$-spectra $X$ and $Y$, there is an equivalence of spectra
$$\Phi^G( X \wedge Y) \simeq \Phi^G X \wedge \Phi^G Y.$$
\end{proposition}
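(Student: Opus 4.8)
The plan is to build a natural comparison map $\Phi^G X \sm \Phi^G Y \to \Phi^G(X\sm Y)$ out of the lax symmetric monoidal structure on categorical $G$-fixed points together with the smash-idempotence of $\widetilde{E\mathcal{P}}$, and then to show it is an equivalence by a localizing-subcategory argument that reduces to the case where $X$ and $Y$ are suspension spectra of orbits, where it follows from the preceding proposition.

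First I would record that $\widetilde{E\mathcal{P}}$ is smash-idempotent: the collapse map induces an equivalence $\widetilde{E\mathcal{P}}\sm\widetilde{E\mathcal{P}}\simeq\widetilde{E\mathcal{P}}$. Indeed $E\mathcal{P}\times E\mathcal{P}$ has contractible $H$-fixed points for $H<G$ and empty $G$-fixed points, so by the characterizing property $E\mathcal{P}\times E\mathcal{P}\simeq E\mathcal{P}$ and hence $E\mathcal{P}_+\sm E\mathcal{P}_+\simeq E\mathcal{P}_+$; smashing the defining cofiber sequence $E\mathcal{P}_+\to S^0\to\widetilde{E\mathcal{P}}$ with $\widetilde{E\mathcal{P}}$ then shows $E\mathcal{P}_+\sm\widetilde{E\mathcal{P}}\simeq *$ and therefore $\widetilde{E\mathcal{P}}\sm\widetilde{E\mathcal{P}}\simeq\widetilde{E\mathcal{P}}$. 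In particular $\widetilde{E\mathcal{P}}$ carries a commutative multiplication $\mu\colon\widetilde{E\mathcal{P}}\sm\widetilde{E\mathcal{P}}\to\widetilde{E\mathcal{P}}$ which is an equivalence. Since categorical fixed points $(-)^G$ is lax symmetric monoidal, I then form the composite
$$ \Phi^G X\sm\Phi^G Y = (\widetilde{E\mathcal{P}}\sm X)^G\sm(\widetilde{E\mathcal{P}}\sm Y)^G \longrightarrow (\widetilde{E\mathcal{P}}\sm X\sm\widetilde{E\mathcal{P}}\sm Y)^G \overset{(\mu)^G}{\longrightarrow} (\widetilde{E\mathcal{P}}\sm X\sm Y)^G = \Phi^G(X\sm Y), $$
whose second map is an equivalence by idempotence; call the composite $\phi_{X,Y}$.

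To see $\phi_{X,Y}$ is an equivalence, fix $Y$ and consider the full subcategory of those $X$ for which $\phi_{X,Y}$ is an equivalence. Both $X\mapsto\Phi^G X\sm\Phi^G Y$ and $X\mapsto\Phi^G(X\sm Y)$ are exact and preserve coproducts (geometric fixed points is exact and coproduct-preserving, as is smashing with a fixed spectrum), so this subcategory is localizing; since the orbit suspension spectra $\Sigma^\infty_G G/H_+$ generate the category of $G$-spectra as a localizing subcategory, it suffices to treat $X=\Sigma^\infty_G G/H_+$, and the symmetric argument in $Y$ reduces us further to $Y=\Sigma^\infty_G G/K_+$. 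In that case $X\sm Y\simeq\Sigma^\infty_G(G/H\times G/K)_+$, and applying the preceding proposition together with $(G/H\times G/K)^G=(G/H)^G\times(G/K)^G$ and the monoidality $\Sigma^\infty(A\times B)_+\simeq\Sigma^\infty A_+\sm\Sigma^\infty B_+$ of suspension spectra of spaces, both the source and the target of $\phi_{X,Y}$ are identified with $\Sigma^\infty\big((G/H)^G\times(G/K)^G\big)_+$.

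The step requiring the most care is the final one: checking that under these identifications $\phi_{X,Y}$ really is the evident equivalence, rather than one twisted by some automorphism. This amounts to tracing the lax monoidal structure maps of $(-)^G$ through the tom Dieck-splitting computation underlying the preceding proposition and observing that on suspension spectra of orbits they reduce to the space-level identity $(G/H\times G/K)^G=(G/H)^G\times(G/K)^G$. Since this compatibility is entirely standard, an alternative is to invoke the point-set model of (monoidal) geometric fixed points for orthogonal $G$-spectra, which is strong symmetric monoidal by construction and agrees with $(\widetilde{E\mathcal{P}}\sm(-))^G$ on cofibrant objects, as in \cite[Sec. 2.5]{HHR16}; from that model the proposition is immediate.
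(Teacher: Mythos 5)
The paper does not actually prove this proposition; the surrounding text defers all the facts of \S 2.5 to \cite[Sec.~2.5]{HHR16}. Your proposal therefore supplies an argument the paper merely cites, using the paper's definition $\Phi^G X = (\widetilde{E\mathcal{P}}\sm X)^G$, and the outline (smash-idempotence of $\widetilde{E\mathcal{P}}$ plus lax monoidality of $(-)^G$ to build the comparison map, then a localizing-subcategory reduction to orbit suspension spectra, using that $\Phi^G$ preserves colimits) is the standard one and is sound. One small correction in the idempotence step: smashing the cofiber sequence $E\mathcal{P}_+\to S^0\to\widetilde{E\mathcal{P}}$ with $\widetilde{E\mathcal{P}}$ does not by itself yield $E\mathcal{P}_+\sm\widetilde{E\mathcal{P}}\simeq *$. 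Instead, smash it with $E\mathcal{P}_+$ and use that $E\mathcal{P}_+\sm E\mathcal{P}_+\to E\mathcal{P}_+$ is an equivalence to conclude $E\mathcal{P}_+\sm\widetilde{E\mathcal{P}}\simeq *$; only then smash the original sequence with $\widetilde{E\mathcal{P}}$ to get $\widetilde{E\mathcal{P}}\simeq\widetilde{E\mathcal{P}}\sm\widetilde{E\mathcal{P}}$.

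The issue you flag at the end --- that $\phi_{X,Y}$ on orbits might be twisted by an automorphism --- is real in principle but easier to dispel than ``trace the tom Dieck splitting through the lax structure.'' For $H\leq G$, $(G/H)^G$ is empty unless $H=G$. Hence for $X=\Sigma^\infty_G G/H_+$ and $Y=\Sigma^\infty_G G/K_+$ with $H<G$ or $K<G$, all three of $\Phi^G X$, $\Phi^G Y$, and $\Phi^G(X\sm Y)\simeq\Sigma^\infty\big((G/H)^G\times(G/K)^G\big)_+$ are contractible by the preceding proposition, and $\phi_{X,Y}$ is vacuously an equivalence. The only remaining orbit pair is $H=K=G$, where $\phi$ is the multiplication on the commutative ring spectrum $\Phi^G\mathbb{S}$; this is an equivalence as soon as one knows that the lax-monoidal unit $\mathbb{S}\to\Phi^G\mathbb{S}$ is an equivalence (then unitality gives a one-sided inverse to the multiplication which is itself an equivalence). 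That unit claim is a genuine input beyond the preceding proposition as a bare equivalence of spectra --- it needs compatibility with the ring structure --- and it is exactly what the point-set monoidal model of \cite[Sec.~2.5]{HHR16} makes transparent. So your fallback citation is the same move the paper makes, but the reduction above shows the only honest verification is the unit computation for the sphere, not a full tom Dieck chase.
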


\subsection{Proper homotopy orbits}\label{SS:PHO}

In order to relate geometric fixed points to categorical fixed points, we will need the following auxiliary construction.

\begin{definition}
The \emph{proper homotopy orbits} of a $G$-spectrum $X$ are given by
$$X_{h\mathcal{P}} = (E\mathcal{P}_+ \wedge X)^G.$$
\end{definition}







\subsection{Isotropy separation sequence}\label{SS:ISS}

In this section, we prove \cref{Lemma:Bredon-to-PhiG}, a central tool in our analysis of Bredon homology. It is based on the following cofiber sequence, which allows us to understand categorical fixed points, provided we can understand geometric fixed points and proper homotopy orbits.

\begin{lemma}[Isotropy Separation Sequence {\cite[Section 2.5.2]{HHR16}}]
The collapse map of \cref{Def:PhiG} induces a cofiber sequence of spectra
$$ X_{h\mathcal P} \to X^G \to \Phi^G X. $$
\end{lemma}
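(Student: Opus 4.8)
The plan is to deduce the sequence directly from the cofiber sequence that \emph{defines} $\widetilde{E\mathcal P}$, by smashing with $X$ and then passing to categorical fixed points. No new input is needed beyond the formal exactness of these two operations, so the argument is short; alternatively one may simply cite \cite[Section 2.5.2]{HHR16}.

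First I would recall from \cref{Def:PhiG} that $\widetilde{E\mathcal P}$ is by definition the cofiber of the collapse map $E\mathcal P_+\to S^0$, so that
$$ E\mathcal P_+ \to S^0 \to \widetilde{E\mathcal P} $$
is a cofiber sequence of $G$-spectra. Smashing with the $G$-spectrum $X$ preserves cofiber sequences, since the smash product of $G$-spectra is exact in each variable; this gives a cofiber sequence
$$ E\mathcal P_+\wedge X \to X \to \widetilde{E\mathcal P}\wedge X . $$
Then I would apply the categorical fixed point functor $(-)^G$. This functor is exact: as a right adjoint (of the inflation functor from spectra to $G$-spectra) it preserves finite limits, and since both categories are stable it therefore preserves fiber sequences, hence cofiber sequences. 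Applying $(-)^G$ and unwinding the definitions $X_{h\mathcal P}=(E\mathcal P_+\wedge X)^G$ and $\Phi^G X=(\widetilde{E\mathcal P}\wedge X)^G$ produces exactly
$$ X_{h\mathcal P} \to X^G \to \Phi^G X, $$
with the first map induced by $E\mathcal P_+\to S^0$ and the second by $S^0\to\widetilde{E\mathcal P}$, as claimed.

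The only point that requires care — and it is bookkeeping rather than a genuine obstacle — is that every construction is carried out in the genuine $G$-equivariant stable category: it is there that $E\mathcal P$ and $\widetilde{E\mathcal P}$ are characterized by their fixed-point behavior, that $\Sigma^\infty_G$ converts the space-level cofiber sequence into one of $G$-spectra, and that $(-)^G$ enjoys the exactness used above. With this setup in place the content of the lemma is entirely packaged into the definition of $\widetilde{E\mathcal P}$ together with the exactness of $-\wedge X$ and $(-)^G$, exactly as in \cite[Section 2.5.2]{HHR16}.
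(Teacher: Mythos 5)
Your argument is correct and is precisely the proof one would find by unwinding the citation: the paper does not give its own proof but defers to \cite[Section 2.5.2]{HHR16}, which establishes the isotropy separation sequence exactly as you do, by smashing the defining cofiber sequence $E\mathcal P_+\to S^0\to\widetilde{E\mathcal P}$ with $X$ and applying the exact functor $(-)^G$. There is no discrepancy to report.
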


\begin{lemma}\label{Lemma:Bredon-to-PhiG}
Let $G$ be a finite group, let $d\geq 0$, and let $X$ be a $G$-spectrum. Let
$\mathcal{C}$ be a Serre class. Assume $\pi_q \Phi^K(H\underline{\Z}\sm X)\in
\mathcal{C}$ for all $K\leq G$, $q\leq d+1$. Then $H_q^K(X;\underline{\Z})\in
\mathcal{C}$ for all $K\leq G$, $q\leq d$.
\end{lemma}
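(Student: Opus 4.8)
The plan is to induct on $d$ and run the isotropy separation sequence fiber-by-fiber over all subgroups. Fix a Serre class $\mathcal{C}$ closed under subobjects, quotients, and extensions. The base case $d = 0$ and the inductive step have the same shape, so I would organize the argument as a single induction: assume the conclusion of the lemma holds in degrees $\leq d-1$ (vacuously when $d=0$) whenever the geometric-fixed-point hypothesis holds in degrees $\leq d$, and prove it in degree $\leq d$ under the hypothesis in degrees $\leq d+1$.

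First I would reduce to a statement about all the $\pi_q^K$, not just the top group. Since $H_q^K(X;\underline{\Z}) = \pi_q\bigl((i_K^* X \wedge H\underline{\Z})^K\bigr)$ and geometric fixed points and smash with $H\underline{\Z}$ commute with restriction, it suffices to prove: if $\pi_q \Phi^K(H\underline{\Z}\wedge X) \in \mathcal{C}$ for all $K \leq G$ and $q \leq d+1$, then $\pi_q\bigl((H\underline{\Z}\wedge X)^G\bigr) \in \mathcal{C}$ for $q \leq d$ — and then apply this to each $i_K^* X$ in place of $X$, noting the hypothesis is inherited by restriction. So write $Y = H\underline{\Z}\wedge X$ and work with $\pi_* Y^G$.

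Next, invoke the isotropy separation sequence $Y_{h\mathcal{P}} \to Y^G \to \Phi^G Y$. Taking long exact sequences on homotopy groups, and using that $\mathcal{C}$ is closed under extensions, it is enough to show $\pi_q Y_{h\mathcal{P}} \in \mathcal{C}$ for $q \leq d$ (the $\pi_q \Phi^G Y$ terms are in $\mathcal{C}$ for $q \leq d+1$ by hypothesis, which is more than enough for both the surjection onto $\pi_q Y^G$ modulo the image of $\pi_q Y_{h\mathcal{P}}$ and the boundary map landing in $\pi_{q-1} Y_{h\mathcal{P}}$). To control $\pi_* Y_{h\mathcal{P}} = \pi_*\bigl((E\mathcal{P}_+ \wedge Y)^G\bigr)$, I would use a cell filtration of $E\mathcal{P}$ by proper orbits $G/K$, $K < G$. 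Each layer contributes, after applying $(-\wedge Y)^G$, a wedge of shifted copies of $(G/K_+ \wedge Y)^G \simeq (i_K^* Y)^K$, whose homotopy groups are exactly $H_*^K(X;\underline{\Z})$ for proper $K$. By the inductive hypothesis applied with the same $d$ — here one must be slightly careful: the inductive hypothesis as stated gives control in degrees $\leq d-1$, so I need the version of the lemma for proper subgroups in degree $\leq d$; this follows because $|K| < |G|$ lets me induct on the order of the group as well, or simply bundle "$(d, |G|)$" into a single lexicographic induction. Either way, $H_q^K(X;\underline{\Z}) \in \mathcal{C}$ for all $K < G$ and $q \leq d$. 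An Atiyah–Hirzebruch / cellular-filtration spectral sequence for $Y_{h\mathcal{P}}$ built from $E\mathcal{P}$, with $E_1$-terms sums of these $H_*^K(X;\underline{\Z})$ for proper $K$, then converges (finitely in each degree, since $G$ is finite and one can take a finite-type model) to $\pi_* Y_{h\mathcal{P}}$; as $\mathcal{C}$ is closed under subquotients and extensions, $\pi_q Y_{h\mathcal{P}} \in \mathcal{C}$ for $q \leq d$. Feeding this back into the long exact sequence of the isotropy separation sequence gives $\pi_q Y^G = H_q^G(X;\underline{\Z}) \in \mathcal{C}$ for $q \leq d$, completing the induction.

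The main obstacle is the bookkeeping of the double induction on $(d, |G|)$ together with making the cellular/Atiyah–Hirzebruch argument for $\pi_* Y_{h\mathcal{P}}$ precise: one needs that $E\mathcal{P}$ admits a $G$-CW model with cells only of orbit type $G/K$ for $K$ proper (standard), that the associated filtration spectral sequence converges, and that in each total degree only finitely many $E_1$-terms are nonzero so that the extension-closure of $\mathcal{C}$ applies. The commutation of $\Phi^K$ and $i_K^*$ with $H\underline{\Z}\wedge(-)$, and the identification $(G/K_+ \wedge Y)^G \simeq (i_K^* Y)^K$ (Wirthmüller/induction–restriction adjunction), are routine but should be cited. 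Everything else is formal manipulation of long exact sequences and Serre-class closure properties.
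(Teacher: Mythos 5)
Your proposal takes essentially the same approach as the paper's proof: both induct over the subgroup lattice, run the isotropy separation long exact sequence, and control the proper-homotopy-orbits term by filtering $E\mathcal{P}_+$ by cells of proper orbit type and invoking self-duality of orbits together with the inductive hypothesis. The only differences are cosmetic --- you frame it as a lexicographic induction on $(d,|G|)$ where the paper fixes $d$ and inducts only on the subgroup, and you argue directly from extension-closure of Serre classes where the paper packages the same content into an explicit ``Four Lemma mod $\mathcal{C}$'' (\cref{Lem:FourModC}) and its spectral-sequence corollary (\cref{lem:sseq-iso-mod-serre}).
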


The proof relies on the following homological fact, the proof of which we learned from Piotr Pstr{\k{a}}gowski. 

\begin{lemma}[Four Lemma mod Serre]\label{Lem:FourModC}
Suppose we have a commutative diagram
\[
\begin{tikzcd}
A \arrow{r}{f} \arrow{d}{\ell} & B \arrow{r}{g} \arrow{d}{m} & C \arrow{r}{h} \arrow{d}{n} & D \arrow{d}{p} \\
A' \arrow{r}{r} & B' \arrow{r}{s} & C' \arrow{r}{t} & D'
\end{tikzcd}
\]
in an abelian category $\mathcal{A}$. Let $\mathcal{C} \subseteq \mathcal{A}$ be a Serre subcategory. Suppose the rows are exact, $\ell$ is an epimorphism mod $\mathcal{C}$ and $p$ is a monomorphism mod $\mathcal{C}$. If $n$ is an epimorphism mod $\mathcal{C}$ then so is $m$. If $m$ is a monomorphism mod $\mathcal{C}$ then so is $n$.
\end{lemma}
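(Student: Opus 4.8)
The plan is to prove the ``Four Lemma mod Serre'' by mimicking the classical diagram chase for the four lemma, but carrying it out in the Serre quotient category $\mathcal{A}/\mathcal{C}$, where ``epimorphism mod $\mathcal{C}$'' and ``monomorphism mod $\mathcal{C}$'' become honest epimorphisms and monomorphisms. First I would recall that for a Serre subcategory $\mathcal{C}\subseteq\mathcal{A}$, the quotient $\mathcal{A}/\mathcal{C}$ is abelian and the localization functor $Q:\mathcal{A}\to\mathcal{A}/\mathcal{C}$ is exact. A morphism $f$ in $\mathcal{A}$ is an epimorphism (resp. monomorphism) mod $\mathcal{C}$ — meaning $\coker f\in\mathcal{C}$ (resp. $\ker f\in\mathcal{C}$) — precisely when $Q(f)$ is an epimorphism (resp. monomorphism) in $\mathcal{A}/\mathcal{C}$. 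Applying $Q$ to the given commutative diagram yields a commutative diagram in $\mathcal{A}/\mathcal{C}$ with exact rows (exactness is preserved since $Q$ is exact), in which $Q(\ell)$ is epi, $Q(p)$ is mono, and we are given that $Q(n)$ is epi (resp. $Q(m)$ is mono).

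The second step is then just the ordinary four lemma applied inside the abelian category $\mathcal{A}/\mathcal{C}$: if $Q(\ell)$ epi, $Q(p)$ mono, and $Q(n)$ epi, then $Q(m)$ is epi; dually, if $Q(\ell)$ epi, $Q(p)$ mono, and $Q(m)$ mono, then $Q(n)$ is mono. Finally, translating back through the equivalence ``$Q(\varphi)$ epi $\iff$ $\varphi$ epi mod $\mathcal{C}$'' and similarly for monos gives exactly the two conclusions of the lemma. One subtlety worth noting: although the classical four lemma is usually stated with elements, it holds in any abelian category — one can either invoke the Freyd--Mitchell embedding theorem to reduce to modules, or give the arrow-theoretic chase directly (both are standard), so no real work is needed here beyond citing it.

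The only genuine points requiring care are the two ``dictionary'' facts about the Serre quotient: that $Q$ is exact, and that $Q(f)$ is epi/mono iff $\coker f$/$\ker f$ lies in $\mathcal{C}$. These are standard (see e.g. Gabriel or the Stacks project on Serre subcategories and quotient categories), so I would simply cite them rather than reprove them. I do not expect any real obstacle; the content of the lemma is essentially the observation that ``mod $\mathcal{C}$'' statements are literally statements in the quotient category, so the classical homological algebra transfers verbatim. If one prefers to avoid invoking the quotient category at all, an alternative is to run the four-lemma diagram chase directly with ``element-wise mod $\mathcal{C}$'' bookkeeping — tracking obstructions as objects of $\mathcal{C}$ at each step — but this is strictly more cumbersome and amounts to re-deriving the exactness of $Q$ by hand, so I would not take that route.
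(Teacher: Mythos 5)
Your proof is correct and takes essentially the same approach as the paper: both pass to the Serre quotient $\mathcal{A}/\mathcal{C}$ via the exact localization functor, apply the classical Four Lemma there, and translate the conclusion back using the fact that a morphism becomes epi (resp.\ mono) in the quotient iff its cokernel (resp.\ kernel) lies in $\mathcal{C}$.
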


\begin{proof}
We just prove the first statement, as the second is analogous.
As $\mathcal{C}$ is a Serre subcategory, there exists an abelian category $\mathcal{A}/\mathcal{C}$ and an exact functor $F: \mathcal{A} \to \mathcal{A}/\mathcal{C}$ which is essentially surjective with kernel $\mathcal{C}$ \cite[Lemma 02MS]{Stacks}. Applying $F$ yields a commutative diagram
\[
\begin{tikzcd}
FA \arrow{r}{Ff} \arrow{d}{F\ell} & FB \arrow{r}{Fg} \arrow{d}{Fm} & FC \arrow{r}{Fh} \arrow{d}{Fn} & FD \arrow{d}{Fp}\\
FA' \arrow{r}{Fr} & FB' \arrow{r}{Fs} & FC' \arrow{r}{Ft} & FD' 
\end{tikzcd}
\]
in the abelian category $\mathcal{C}/\mathcal{A}$ in which the rows are exact, $F\ell$ and $Fn$ are epimorphisms, and $Fp$ is a monomorphism. The classical Four Lemma implies that $Fm$ is an epimorphism, so $F\coker(m)=\coker(Fm)=0$, i.e., $\coker(m)$ lies in $\mathcal{C}$, i.e., $m$ is an epimorphism mod $\mathcal{C}$. 
\end{proof}

\begin{lemma} \label{lem:sseq-iso-mod-serre}
Let $\mathcal{C}\subseteq \mathcal{A}$ be a Serre category.
Let $\sigma:E'_r \to E''_r$ be a map of spectral sequences that is an isomorphism modulo $\mathcal{C}$ on the $E_r$ page for some $r$. Then $E'_{r+1} \too{\sigma} E''_{r+1}$ is an isomorphism mod $\mathcal{C}$.
\end{lemma}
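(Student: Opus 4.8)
The plan is to mimic the classical proof that a map of spectral sequences inducing an isomorphism on the $E_r$-page induces an isomorphism on $E_{r+1}$, but to run the entire argument inside the Serre quotient category $\mathcal{A}/\mathcal{C}$. First I would invoke the same fact used in the proof of \cref{Lem:FourModC}: since $\mathcal{C}$ is a Serre subcategory, there is an exact quotient functor $F:\mathcal{A}\to\mathcal{A}/\mathcal{C}$ whose kernel is exactly $\mathcal{C}$, so that a morphism of $\mathcal{A}$ is an isomorphism mod $\mathcal{C}$ if and only if its image under $F$ is an isomorphism. Applying $F$ to the map of spectral sequences $\sigma:E'_\bullet\to E''_\bullet$ — more precisely to the pages together with their differentials — yields a map of spectral sequences in $\mathcal{A}/\mathcal{C}$ which, by hypothesis, is an isomorphism on the $E_r$-page. (Here I should be a little careful: a ``spectral sequence'' is the data of the objects $E_r^{s,t}$ and the differentials $d_r$; applying the exact functor $F$ termwise sends this to a spectral sequence in $\mathcal{A}/\mathcal{C}$, and the identification $H_*(F E_r, F d_r)\cong F H_*(E_r, d_r)\cong F E_{r+1}$ uses exactness of $F$.)

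Next I would apply the classical statement in the abelian category $\mathcal{A}/\mathcal{C}$: a map of spectral sequences that is an isomorphism on $E_r$ is an isomorphism on $E_{r+1}$. Concretely, $E_{r+1}=\ker d_r/\operatorname{im} d_r$ (in the appropriate bidegree), and $F$ being exact commutes with kernels, images, and subquotients, so $F E'_{r+1}\cong H_*(FE'_r, Fd'_r)$ and similarly for $E''$; since $F\sigma$ is an iso of complexes $(FE'_r, Fd'_r)\xrightarrow{\sim}(FE''_r, Fd''_r)$ it induces an iso on homology, i.e. $F(E'_{r+1})\xrightarrow{\sim} F(E''_{r+1})$. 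Translating back through $F$, this says precisely that $E'_{r+1}\xrightarrow{\sigma} E''_{r+1}$ is an isomorphism mod $\mathcal{C}$, which is the claim.

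I do not expect a serious obstacle here; the lemma is essentially bookkeeping. The one point requiring a little care — and the ``hard part'' insofar as there is one — is making sure the exact functor $F$ genuinely commutes with the formation of the $E_{r+1}$-page: this is the assertion that an exact functor between abelian categories preserves kernels, cokernels, images, and hence subquotient homology objects, which is standard but should be cited (e.g.\ again via \cite[Lemma 02MS]{Stacks} for existence and exactness of $F$, and the definition of exactness for the rest). A secondary, purely expository, subtlety is to fix conventions for what ``map of spectral sequences'' and ``isomorphism mod $\mathcal{C}$ on the $E_r$-page'' mean (componentwise in each bidegree, compatibly with differentials) so that the statement is unambiguous; once those are pinned down the proof is the two-line argument above.
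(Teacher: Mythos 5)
Your proof is correct, but it takes a genuinely different route from the paper's. The paper deduces the statement from its Four Lemma mod Serre (\cref{Lem:FourModC}) by applying that lemma three times: to the diagram identifying cycles $Z'_r \to Z''_r$ as an iso mod $\mathcal{C}$, then to the short exact sequence $0\to Z'_r\to E'_r\to B'_r\to 0$ to handle boundaries, and finally to $0\to B'_r\to Z'_r\to E'_{r+1}\to 0$ to conclude. You instead apply the exact quotient functor $F:\mathcal{A}\to\mathcal{A}/\mathcal{C}$ directly to the entire spectral sequence, use the elementary fact that an exact functor commutes with taking homology (so $F(E_{r+1})\cong H(FE_r,Fd_r)$), invoke the classical statement in $\mathcal{A}/\mathcal{C}$, and translate back. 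This is cleaner and more conceptual for this particular lemma, and it makes transparent that the lemma is ``classical statement plus functoriality of the Serre quotient.'' The paper's route has the minor virtue of reusing \cref{Lem:FourModC}, which it needs elsewhere (notably in the proof of \cref{Lemma:Bredon-to-PhiG}), so both lemmas rest on the same foundation; your approach effectively inlines the quotient-functor argument that underlies \cref{Lem:FourModC} in the first place. Both are complete; neither has a gap.
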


\begin{proof}
This is a straightforward consequence of Lemma \ref{Lem:FourModC}. First apply Lemma \ref{Lem:FourModC} to the diagram
$$ \xymatrix{
0\ar[r]\ar[d] & Z'_r\ar[r]\ar[d] & E'_r\ar[r]^-{d_r}\ar[d]^\sigma & E'_r\ar[d]^\sigma
\\0\ar[r] & Z''_r\ar[r] & E''_r\ar[r]^-{d_r} & E''_r
}$$
(where $Z'_r$ denotes the submodule of cycles) to show that $Z'_r\to Z''_r$ is an isomorphism mod $\mathcal{C}$. Similar arguments involving the exact sequences $0\to Z'_r \to E'_r \to B'_r \to 0$ and $0\to B'_r \to Z'_r \to Z'_r/B'_r= E'_{r+1}\to 0$ show that the induced maps $Z'_r\to Z''_r$ and $E'_{r+1}\to E''_{r+1}$, respectively, are isomorphisms mod $\mathcal{C}$.
\end{proof}

\begin{proof}[Proof of \cref{Lemma:Bredon-to-PhiG}]
By induction up the subgroup lattice. For the base case, 
$$ H^e_q(X; \underline{\Z}) = \pi_q((H\underline{\Z}\sm X)^e) =
\pi_q(\Phi^e(H\underline{\Z}\sm X)) \in \mathcal{C}. $$
Now let $\{ e \} < K\leq G$ and let $\mathcal{P}$ denote the family of proper
subgroups of $K$.
The isotropy separation sequence
$$ (H\underline{\Z}\sm X)_{h\mathcal{P}}\to
(H\underline{\Z}\sm X)^K\to \Phi^K(H\underline{\Z}\sm X)$$
gives rise to a long exact sequence
\begin{gather*}
\cdots \to \pi_{q+1}(\Phi^K(H\underline{\Z}\sm X))\to \pi_q((H\underline{\Z}\sm X)_{h\mathcal{P}}) \to H^K_q(X; \underline{\Z}) 
\\\to \pi_q(\Phi^K(H\underline{\Z}\sm X)) \to \pi_{q-1}((H\underline{\Z}\sm X)_{h\mathcal{P}}) \to \cdots. 
\end{gather*}
To show that the middle term is in $\mathcal{C}$, we claim it suffices to show that all
the other terms are in $\mathcal{C}$: apply Lemma \ref{Lem:FourModC}
to the map from the above exact sequence to the zero exact sequence.

The assumption says that the first and fourth terms are in $\mathcal{C}$, so we
focus on showing the second term is in $\mathcal{C}$, which would also show the fifth
term is in $\mathcal{C}$ because $q\leq d$ is arbitrary.
There is a cell decomposition of $E\mathcal{P}_+$ by a finite collection of
cells
$$ \{ {K/H_i}_+ \sm S^{n_i} \}_{i\in \mathcal{I}} $$
with every $H_i < K$ (cf. \cite[Section 2.5.2]{HHR16}). Filtering $E\mathcal{P}_+\sm X$ by the cells of
$E\mathcal{P}_+$ gives rise to a spectral sequence
$$ E_1^{i,p-n_i} \cong H_p^K({K/H_i}_+\sm S^{n_i}\sm X)
\Rightarrow H^K_{p-n_i}(E\mathcal{P}_+\sm X). $$
We have
\begin{align*}
H^K_p({K/H}_+\sm S^n\sm X)  & \cong [S, H\underline{\Z}\sm K/H\sm X]^K_{p-n}
\cong [{K/H}_+, H\underline{\Z}\sm X]^K_{p-n}
\\ & \cong [S, H\underline{\Z}\sm X]^H_{p-n} \cong H^H_{p-n}(X;\underline{\Z})
\end{align*}
using self-duality of $K/H_+$. In particular, since $H_i<K$, 
we have $E_1^{i,q}\in \mathcal{C}$ by the induction hypothesis. By Lemma
\ref{lem:sseq-iso-mod-serre}, $H^K_q(E\mathcal{P}_+\sm X)$ is also in
$\mathcal{C}$.
\end{proof}

\subsection{Geometric fixed points of Eilenberg--MacLane $G$-spectra}\label{SS:GEM}

In light of the previous section, one piece of understanding the Bredon homology $(H\uM\sma \sus^\infty_GX)^G$ of a $G$-space $X$ is understanding a geometric fixed points term of the form
$$ \Phi^G(H\uM\sma \sus^\infty_GX) \simeq \Phi^G H\uM \sma \Phi^G \sus^\infty_GX \simeq \Phi^G H\uM \sma \sus^\infty X^G, $$ 
which is the $\Phi^G H\uM$-homology of the space $X^G$. In many cases of interest (see Proposition \ref{Prop:Green}), this splits into (nonequivariant) Eilenberg-Maclane spaces, allowing us to reduce to studying ordinary homology of fixed point spaces.

Recall that the category of Mackey functors is symmetric monoidal with unit the Burnside Mackey functor $\uA$ and monoidal product the \emph{box product}. A Green functor is a monoid in the category of Mackey functors \cite{lewis-green}.

\begin{proposition}\label{Prop:Green}
Let $\uM$ be a Green functor. For all $H \leq G$, the spectrum $\Phi^H H\uM$ is a connective generalized Eilenberg--MacLane spectrum, i.e., it splits as a wedge of nonnegative suspensions of Eilenberg--MacLane spectra. Moreover, if $\uM(G/K)$ is finitely generated as an abelian group for all $K \leq H$, then $\pi_k \Phi^H H\uM$ is a finitely generated abelian group for each $k \in \z$. 
\end{proposition}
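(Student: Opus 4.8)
The plan is to prove that $\Phi^H H\uM$ is a generalized Eilenberg--MacLane spectrum by producing enough structure to force its Postnikov tower to split, and then to track finite generation through that argument. Since the statement is about $\Phi^H H\uM$ for all $H\leq G$, and $\Phi^H$ of a $G$-spectrum factors through restriction to $H$ followed by $\Phi^H$, I would immediately reduce to the case $H=G$ by replacing $\uM$ with $i_H^*\uM$ (which is again a Green functor, with the same finite-generation hypothesis inherited levelwise).

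The key structural input is that $\uM$ is a \emph{Green} functor, so $H\uM$ is a (homotopy) commutative ring $G$-spectrum, and since $\Phi^G$ is strong symmetric monoidal (the proposition in \cref{SS:GFP}), $\Phi^G H\uM$ inherits the structure of a homotopy commutative ring spectrum. The crucial observation is then that $\Phi^G H\uM$ is an $H\pi_0$-algebra: any connective ring spectrum $R$ admits a ring map to its zeroth Postnikov truncation $R\to H\pi_0 R$, and a connective $H A$-module spectrum for $A$ an ordinary ring is automatically a generalized Eilenberg--MacLane spectrum (it splits as $\bigvee_{k\geq 0}\Sigma^k H\pi_k$, since the category of $HA$-modules is equivalent to the derived category of $A$, where every object is formal). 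So the core of the argument is: (i) identify $\pi_0\Phi^G H\uM$ — by the monoidal properties and the computation $\Phi^G\Sigma^\infty_G X\simeq \Sigma^\infty X^G$ together with the known description of $\pi_0$ of geometric fixed points of Eilenberg--MacLane spectra, this is $\uM(G/G)$ modulo the image of transfers from proper subgroups (the "geometric" or Brauer quotient of $\uM$), which is a commutative ring; (ii) conclude $\Phi^G H\uM$ is a module over $H$ of that ring and hence splits.

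For the finite generation statement, once the splitting $\Phi^G H\uM \simeq \bigvee_{k\geq 0}\Sigma^k H\pi_k\Phi^G H\uM$ is in hand, it suffices to show each $\pi_k\Phi^G H\uM$ is a finitely generated abelian group under the hypothesis that every $\uM(G/K)$ is finitely generated. I would compute these homotopy groups via the isotropy separation sequence (or equivalently via the cellular filtration of $\widetilde{E\mathcal P}$): there is a cofiber sequence $(H\uM)_{h\mathcal P}\to (H\uM)^G\to \Phi^G H\uM$, where $(H\uM)^G$ has homotopy $\uM(G/G)$ concentrated in degree $0$, and $(H\uM)_{h\mathcal P}$ is built from finitely many cells $G/H_{i+}\wedge S^{n_i}$ with $H_i<G$, contributing $\pi_*$'s that are (by the self-duality computation already carried out in the proof of \cref{Lemma:Bredon-to-PhiG}) Bredon homology groups $H^{H_i}_*(\mathrm{pt};\uM)=\uM(H_i/H_i)$, finitely generated by hypothesis. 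The associated spectral sequence has finitely generated $E_1$-terms in each total degree, hence finitely generated abutment, and the long exact sequence then forces $\pi_k\Phi^G H\uM$ to be finitely generated.

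The main obstacle I anticipate is making the "$H\pi_0$-algebra implies generalized EM" step rigorous in the homotopy-coherent sense: a priori $H\uM$ is only a homotopy-commutative (or $A_\infty$/$E_\infty$ after a cofibrant replacement) ring, and one needs enough multiplicative structure on $\Phi^G H\uM$ to build the module structure over $H\pi_0$, not just a ring map up to homotopy. The cleanest route is to work with genuine commutative ring $G$-spectra (the category of Green functors does give $E_\infty$, or at least $H_\infty$, $G$-ring spectra, and $\Phi^G$ preserves $E_\infty$-structure), so that $\Phi^G H\uM$ is honestly an $E_\infty$-ring, the truncation map $\Phi^G H\uM\to H\pi_0$ is a map of $E_\infty$-rings, and $\Phi^G H\uM$ is a module over $H\pi_0$ via restriction of scalars; then the formality of modules over an Eilenberg--MacLane ring spectrum finishes it. An alternative, more hands-on route avoiding coherence issues is to cite the fact (e.g.\ from work on the structure of $\Phi^G H\underline{\Z}$ and $\Phi^G H\underline{\F_p}$, or the general statement that geometric fixed points of Eilenberg--MacLane spectra of Green functors are GEMs) directly, but I would prefer to give the module-theoretic argument since it also yields the finite generation claim uniformly.
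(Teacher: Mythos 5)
Your outline matches the paper's in broad strokes — lax/strong monoidality of $\Phi^H$ to get a ring structure, the "module over an Eilenberg--MacLane ring spectrum is GEM" principle, and the isotropy separation sequence for connectivity and finite generation. But the step you identify as the crux is where your argument has a genuine gap: you claim that the Postnikov truncation $\Phi^G H\uM \to H\pi_0$ makes $\Phi^G H\uM$ an $H\pi_0$-module ``via restriction of scalars.'' Restriction of scalars along a ring map $R\to S$ turns $S$-modules into $R$-modules, not the other way around. To make $\Phi^G H\uM$ an $H\pi_0$-module you would need a ring map \emph{into} $\Phi^G H\uM$ from an Eilenberg--MacLane ring, i.e.\ a section of the truncation, which generically does not exist (the sphere spectrum is a connective $E_\infty$-ring with $\pi_0 = \Z$ but is certainly not an $H\Z$-module). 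So the formality argument as written does not get off the ground.

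The paper's fix is to get the ring map in the right direction from a different source: the lax monoidal natural transformation $(-)^H \to \Phi^H(-)$ (cf.\ \cite[\S B.10.5]{HHR16}). Applied to $H\uM$ this gives a ring spectrum map $(H\uM)^H \to \Phi^H H\uM$, and the source is already Eilenberg--MacLane: $(H\uM)^H\simeq H\uM(G/H)$ with $\uM(G/H)$ a commutative ring because $\uM$ is Green. Precomposing with $H\Z\to H\uM(G/H)$ exhibits $\Phi^H H\uM$ as an $H\Z$-module, and Adams's theorem that $H\Z$-modules are generalized Eilenberg--MacLane spectra finishes the splitting claim, with no need to identify $\pi_0\Phi^H H\uM$ (your step (i) is an unnecessary detour). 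The rest of your proposal — connectivity from the cofiber sequence $(H\uM)_{h\mathcal P}\to (H\uM)^H\to \Phi^H H\uM$, and finite generation from the finite $H$-CW structure on $E\mathcal P$ together with the self-duality computation $H^K_*(K/H_{i+}\wedge S^{n_i}\wedge X)\cong H^{H_i}_*(X)$ — is correct and coincides with the paper's argument. Your reduction to $H=G$ via restriction of Green functors is also fine, though the paper doesn't bother to spell it out.
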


\begin{proof}
The lax monoidal transformation $(-)^H \to \Phi^H(-)$ induces a map of commutative ring spectra (cf. \cite[\S B.10.5]{HHR16})
$$(H\uM)^H \to \Phi^H H\uM.$$
Since $H\uM$ is Eilenberg--MacLane, $(H\uM)^H \simeq H\uM(G/H)$, and since $\uM$ is a Green functor, $\uM(G/H)$ is a commutative ring (see, e.g., \cite[\S 2.2.5]{Shu10}). Therefore we have a map of commutative ring spectra
$$H\z \to H\uM(G/H) \simeq (H\uM)^H \to \Phi^H H\uM$$
which determines an $H\z$-module structure on $\Phi^H H\uM$. The claim that $\Phi^H H\uM$ is generalized Eilenberg--MacLane then follows from the classical fact (usually attributed to Adams) that any $H\z$-module is generalized Eilenberg--MacLane. 

That $\Phi^H H\uM$ is connective follows from the isotropy separation sequence: we have
$$\Phi^H H\uM \simeq \cofib((H\uM)_{h\mathcal{P}} \to (H\uM)^{H})$$
and both $(H\uM)_{h\mathcal{P}}$ and $(H\uM)^H$ are connective, so $\Phi^H H\uM$ is connective. 

Finite generation also follows from the isotropy separation sequence: the finite generation hypothesis on $\uM(G/K)$, along with the fact that $E\mathcal{P}$ admits an $H$-CW structure with finitely cells in each dimension, implies that each homotopy group of $(H\uM)_{h \mathcal{P}}$ and $(H\uM)^H \simeq H(\uM(G/H))$ is finitely generated, so the same is true for $\Phi^H H\uM$. 
\end{proof}

\begin{example}
The Burnside Mackey functor $\uA$ is the Mackey functor given by 
$$ \uA(G/K)=A(K), $$
where $A(K)$ denotes the Grothendieck ring of finite $K$-sets. The restrictions and transfers in $\uA$ are induced by restriction and induction of $K$-sets, respectively. 

The constant Mackey functor $\mfz$ has $\mfz(G/K)=\Z$ in every level. The restrictions in $\mfz$ are the identity map $\Z\to\Z$ and the transfer $\mfz(G/K)\to\mfz(G/L)$ is given by multiplication by the index $[L:K]$.

Both $\uA$ and $\mfz$ are Green functors, so $\Phi^H H\uA$ and $\Phi^H H\mfz$ are both connective generalized Eilenberg--MacLane spectra. Moreover, $\uA(G/K) = A(K)$ and $\mfz(G/K) = \z$ are finitely generated abelian groups for all $K \leq H$, so $\pi_k \Phi^H H\uA$ and $\pi_k \Phi^H H\mfz$ are both finitely generated abelian groups for all $k\geq 0$. 
\end{example}

\begin{remark}
It is possible to describe $\Phi^G H\uM$ explicitly in many cases of interest using previous computations of $\pi_*\Phi^G H\uM$ (e.g., \cite{HHR16, Kri20}), but this will not be necessary for our applications. 
\end{remark}




We conclude by recording the following fact which can be used to reduce to $p$-Sylow subgroups for coefficients in a constant Mackey functor. We will not need it later, but it can be used to simplify some arguments in the sequel if one works with constant Mackey functor coefficients. 

\begin{proposition}\label{Prop:GFPVanish}
Let $\uR$ be a constant Green functor and let $\uM$ be an $\uR$-algebra. If $G$ is not a $p$-group, then
$$\Phi^G H\uM \simeq *.$$
\end{proposition}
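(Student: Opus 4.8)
The plan is to leverage the multiplicativity of $\Phi^G$ to reduce the whole statement to the single computation that $\pi_0\Phi^G H\uR$ vanishes. Since $\Phi^G$ is strong symmetric monoidal and $\uR$ is a Green functor, $H\uR$ is a ring $G$-spectrum and $\Phi^G H\uR$ is a ring spectrum; and since $\uM$ is an $\uR$-algebra --- in particular a Green functor, with the unit $\uR\to\uM$ inducing a map of ring $G$-spectra $H\uR\to H\uM$ --- applying $\Phi^G$ produces a unital ring map $\Phi^G H\uR\to\Phi^G H\uM$. I will then use the elementary observation that any ring spectrum $E$ with $1 = 0$ in $\pi_0 E$ is contractible: for every $x\in\pi_* E$ one has $x = 1\cdot x = 0$, so $\pi_* E = 0$. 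Thus, once we know $\pi_0\Phi^G H\uR = 0$, we get $\Phi^G H\uR\simeq *$, and then $1 = 0$ in $\pi_0\Phi^G H\uM$ since it is the image of $1$ along the ring map, so $\Phi^G H\uM\simeq *$ as well. Everything therefore reduces to showing $\pi_0\Phi^G H\uR = 0$.

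To compute $\pi_0\Phi^G H\uR$, I would run the isotropy separation sequence exactly as in the proof of \cref{Prop:Green}: there is a cofiber sequence $(H\uR)_{h\mathcal{P}}\to (H\uR)^G\to\Phi^G H\uR$ in which $(H\uR)_{h\mathcal{P}}$ is connective, so $\pi_0\Phi^G H\uR$ is the cokernel of $\pi_0(H\uR)_{h\mathcal{P}}\to\pi_0(H\uR)^G$. Here $\pi_0(H\uR)^G\cong\uR(G/G)$, and, using a $G$-CW structure on $E\mathcal{P}$ all of whose cells have the form $G/H_+\wedge S^n$ with $H<G$, the image of $\pi_0(H\uR)_{h\mathcal{P}}$ inside $\uR(G/G)$ is the ideal $\sum_{H<G}\tr^G_H\uR(G/H)$ generated by transfers from proper subgroups. (This is the usual identification of $\pi_0$ of geometric fixed points with the Brauer quotient; I would cite it rather than reprove it.) Since $\uR$ is \emph{constant} on a commutative ring $R$, we have $\uR(G/H) = R$ and $\tr^G_H$ is multiplication by $[G:H]$, so $\pi_0\Phi^G H\uR\cong R/I$ with $I=\sum_{H<G}[G:H]\,R$.

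Finally I would check that $I = R$ whenever $G$ is not a $p$-group, which is elementary. It suffices to see that $1$ lies in the subgroup of $\Z$ generated by the indices $[G:G_p]$ of the Sylow subgroups $G_p$, $p\mid|G|$; each $G_p$ is a \emph{proper} subgroup since $|G|$ is not a prime power, so $[G:G_p]R\subseteq I$. If a prime $q$ divided every $[G:G_p]$, then, since each $[G:G_p]$ divides $|G|$, we would have $q\mid|G|$, and taking $p=q$ we would get $q\mid[G:G_q]=|G|/q^{a_q}$ (where $q^{a_q}$ is the full power of $q$ in $|G|$), which is absurd. Hence the gcd of the $[G:G_p]$ is $1$, so $1\in I$, $\pi_0\Phi^G H\uR = 0$, and the reduction above finishes the argument.

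The one step that needs real care is the identification of $\pi_0\Phi^G H\uR$ with the Brauer quotient --- precisely, that the image of $\pi_0(H\uR)_{h\mathcal{P}}$ is \emph{exactly} $\sum_{H<G}\tr^G_H\uR(G/H)$, not merely contained in it. This comes down to using a model of $E\mathcal{P}$ with a $0$-cell $G/H_+$ for every maximal $H<G$ and the fact that transfers compose (so larger proper subgroups contribute nothing new). Everything else is formal from results already in the paper --- multiplicativity of $\Phi^G$, the isotropy separation sequence, and the connectivity observations in the proof of \cref{Prop:Green} --- together with the elementary gcd computation; in particular we do not need the generalized-Eilenberg--MacLane conclusion of \cref{Prop:Green}, only the connectivity of $(H\uR)_{h\mathcal{P}}$.
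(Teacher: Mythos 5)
Your proof is correct and opens with the same reduction as the paper: by strong monoidality of $\Phi^G$, the spectrum $\Phi^G H\uM$ is an algebra (in particular a module) over $\Phi^G H\uR$, so it suffices to show $\Phi^G H\uR \simeq *$. Where you differ is that at this point the paper simply cites an external result (Kriz, Prop.\ 11), whereas you unpack it into a self-contained computation: the isotropy separation sequence together with connectivity of $(H\uR)_{h\mathcal{P}}$ identifies $\pi_0 \Phi^G H\uR$ with a quotient of $R = \uR(G/G)$ by the transfer ideal $\sum_{H<G}[G:H]\,R$, and an elementary Sylow argument shows this ideal is all of $R$ when $G$ is not a $p$-group. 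Your version is essentially a reconstruction of what the cited result proves, trading a one-line reference for self-containedness. Two small remarks. First, the step you flag as needing care --- that the image of $\pi_0(H\uR)_{h\mathcal{P}} \to R$ is \emph{exactly} the transfer ideal --- is over-cautious for your purposes: to conclude vanishing you only need that the image \emph{contains} the transfer ideal, which is immediate from the inclusion of each cell $G/H_+ \hookrightarrow E\mathcal{P}_+$ followed by the identification of the induced map on $\pi_0$ with $\tr^G_H$; the reverse containment would only matter if you wanted to show the Brauer quotient is \emph{nonzero}, which is not the direction you need. Second, once $\Phi^G H\uR \simeq *$, the conclusion $\Phi^G H\uM \simeq *$ follows immediately from the module structure over a contractible ring spectrum, so the second invocation of the ``$1=0$ in $\pi_0$'' observation can be dropped.
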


\begin{proof}
The $G$-spectrum $H\uM$ is an $H\uR$-algebra, and since geometric fixed points are strong symmetric monoidal, the spectrum $\Phi^G H\uM$ is a $\Phi^G H\uR$-algebra. But \cite[Prop. 11]{Kri20} implies that when $G$ is not a $p$-group, $\Phi^G H\uR \simeq *$, from which we conclude $\Phi^G H\uM \simeq *$. 
\end{proof}

\begin{example}
The constant Green functor $\mfz$ is an algebra over itself, so $\Phi^G H\mfz \simeq *$ if $G$ is not a $p$-group. On the other hand, one can show that $\pi_0 \Phi^G H\uA \cong \z$ for any finite group $G$, so $\Phi^G H\uA \not\simeq *$ for any $G$. 
\end{example}

\begin{remark}
For simplicity, we will work with $\mfz$ coefficients in the sequel. If $\uM$ is a constant Mackey functor which is levelwise finitely generated, then it is straightforward to deduce $\uM$ coefficient analogues of all of the $\mfz$ coefficient statements below using the presentation of $\uM$ as a $\mfz$-module (cf. deducing homology isomorphisms with $M$ coefficients from homology isomorphisms with $\z$ coefficients). 

For certain groups, e.g., any cyclic $p$-group $C_{p^n}$, it is also straightforward to pass from $\mfz$ coefficients to $\uA$ coefficients using induction up the subgroup lattice, the short exact sequence of Mackey functors
$$0 \to \uI \to \uA \to \mfz \to 0,$$
and the fact that $H_*^{C_{p^n}}(X;\uI) \cong H_*^{C_{p^{n-1}}}(X^{C_p};\uI')$, where $\uI'$ is the $C_{p^{n-1}}$ Mackey functor with $\uI'(C_{p^{n-1}}/C_{p^m}) = \uI(C_{p^n}/C_{p^{m+1}})$. Since every Mackey functor is an $\uA$-module, one can deduce analogues of our results with coefficients in any Mackey functor which is finitely generated for cyclic $p$-groups. 

We leave the extension to other coefficients and more complicated groups to the interested reader. 
\end{remark}

\section{Configuration spaces and stabilization maps}\label{Sec:Conf}

As in the previous section, $G$ is any finite group. In this section, we discuss configuration spaces and stabilization maps. We recall the nonequivariant theory in \cref{SS:Conf} and discuss the equivariant analogues in \cref{SS:CGS} and \cref{SS:GConf}. In \cref{SS:CGS}, we define the more nuanced notion of $S$-configuration spaces (\cref{Def:ConfG} and \cref{Def:CGS}) and their stabilization maps. Our main results are \cref{Prop:FixptConfDecomp}, which expresses the $G$-fixed points of configuration spaces in terms of $S$-configurations, and \cref{Cor:CGSDecomp} which expresses each $S$-configuration space in terms of products of ordinary configuration spaces. Combining these results yields \cref{MT:CtoCSG}.

\subsection{Nonequivariant configuration spaces and stabilization maps}\label{SS:Conf}

\begin{definition}\label{Def:Conf}
Let $M$ be a space.
The \emph{configuration space of $n$ ordered points in $M$} is
$$ \Conf_n(M):=\Emb(\{1,\ldots,n\},M)= \{ (x_1,\dots,x_n)\in M^{\x n} \st x_i \neq x_j \text{ for all
}i\neq j \} $$
topologized as a subspace of $M^{\x n}$.

The \emph{configuration space of $n$ unordered points in $M$} is the quotient space
$$C_n(M) := \Conf_n(M)/\Sigma_n,$$
where the symmetric group $\Sigma_n$ acts by permuting points. 
\end{definition}


When $M$ is an open, connected manifold with $\dim M\geq 2$, then $M$ is the interior of a manifold with boundary, and there there are well-defined \emph{stabilization maps} $C_n(M) \to C_{n+1}(M)$ (cf. \cite[Secs. 2.1-2.2]{Pal18}). Since we will describe the equivariant analogue of these maps in some detail in \cref{SS:GConf}, we review the non-equivariant construction.

\begin{definition}
We say that an $n$-dimensional manifold $M$ is \emph{stabilizable} if $M$ is homeomorphic to the interior of a $n$-dimensional manifold with nonempty boundary. 
\end{definition}

\begin{construction}
\label{Constr:nonequivariant-stabilization}
When $M$ is stabilizable, we can define a stabilization map on unordered configurations as follows. Let $M$ be stabilizable with $M \cong W^\circ$, where $W$ is an $n$-dimensional manifold with nonempty boundary. Let $p \in \partial W$. By the definition of manifold with boundary, 
there exists an open neighborhood $U \subset W$ containing $p$ together with a diffeomorphism 
$$\phi: U \xrightarrow{\cong} \r^n_+ = \{ x = (x_1,\ldots,x_n) \in \r^n : x_1 \geq 0\}$$ 
sending $p$ to $0$. Let $b: \r^n_+ \to \r^n_+$ be a smooth bump function which  sends $0$ to $(1,0,\ldots,0)$. Let $e: W \to W$ be the self-embedding defined by
$$e(x) = \begin{cases}
	x \quad & \text{ if } x \notin U, \\
	(\phi^{-1} \circ b \circ \phi)(x) \quad & \text{ if } x \in U.
\end{cases}$$
Then by construction, $e(M) \subseteq M$ and $e(p) \notin e(M)$, so we may define 
\begin{equation}\label{eq:sigma}
\sigma: C_n(M) \to C_{n+1}(M)
\end{equation}
by
$$\sigma(m_1,\ldots,m_n) = (e(m_1),\ldots,e(m_n),e(p)).$$
\end{construction}

\subsection{Equivariant configuration spaces}\label{SS:CGS}

In this section, we study equivariant configuration spaces. Our main result (\cref{Cor:CGSDecomp}) will allow us to relate these spaces back to nonequivariant configuration spaces. 

\begin{definition}\label{Def:ConfG-equivariant}
Let $X$ be a $G$-space and let $n\geq 1$. The configuration space of $n$ ordered
points in $X$ is the $G$-space of nonequivariant embeddings
$$ \Conf_n(X) = \Emb(\{ 1,\dots,n \},X). $$
The configuration space of $n$ unordered points in $X$ is the quotient $G$-space
$$ C_n(X) = \Conf_n(X)/\Sigma_n. $$
Let $C(X) = \bigsqcup_{n\geq 1} C_n(X)$ equipped with the disjoint union topology. In $\Conf_n(X)$ and $C_n(X)$, the $G$-action is induced by the action on $X$.
\end{definition}

These are the same definitions as in the nonequivariant case, but now they are
$G$-spaces. In Proposition \ref{Prop:FixptConfDecomp}, we will show that their
fixed points $C_n(X)^G$ split into pieces $C_S^G(X)$ which are described below.

\begin{definition}\label{Def:ConfG}
Let $X$ be a $G$-space and let $S$ be a finite $G$-set of cardinality $n$. The \emph{space of ordered $S$-configurations in $X$}, denoted $\Conf^G_S(X)$, is the space of $G$-equivariant embeddings $S\to X$,
$$\Conf_S^G(X) := \Emb^G(S,X).$$
\end{definition}

This is a $G$-equivariant analogue of the ordinary notion of an ordered configuration space. Indeed, if $G=e$ is the trivial group, we have
$$ \Conf^e_S(X) \cong \Conf_{n}(X^e). $$
Precomposing a $G$-equivariant embedding $S\to X$ by an automorphism of $S$ yields another such embedding. In this way, $\Conf^G_S(X)$ admits an action by the automorphism group $\Aut(S)$. 

\begin{definition} \label{Def:CGS}
The \emph{space of unordered $S$-configurations in $X$} is the quotient
$$ C^G_S(X) := \Conf^G_S(X)/\Aut(S) $$
of $\Conf^G_S(X)$ by the $\Aut(S)$-action.
\end{definition} 

Similarly, this is a $G$-equivariant analogue of the ordinary notion of unordered configuration space. If $G=e$ is the trivial group, then $\Aut(S)\cong\Sigma_{n}$, and we have
$$ C^e_S(X) = \Conf^e_S(X)/\Aut(S) \cong \Conf_{n}(X^e)/\Sigma_{n} = C_{n}(X^e).$$
We can express $C^G_S(X)$ in terms of
nonequivariant configuration spaces of a related space.

\begin{definition}[{\cite[Section I.5]{tom-dieck-transformation-groups}}] \label{Def:X_(H)}
For a $G$-space $X$ and subgroup $H\leq G$, we denote by $X_{(H)}$ the subspace 
$$ X_{(H)} = \{ x\in X : (G_x) = (H) \} \subseteq X $$
consisting of points with stabilizer conjugate to $H$.
\end{definition}

Since the stabilizers of points in a fixed orbit are conjugate, the $G$-action on $X$ restricts to a $G$-action on $X_{(H)}$. The preimage of any point under the projection $X_{(H)}\to X_{(H)}/G$ is a full orbit isomorphic to $G/H$.

\begin{proposition}\label{Prop:CGOrbits}
Given a subgroup $H\subseteq G$ and $k\geq 0$,
the space of unordered $k[G/H]$-configurations in $X$ satisfies 
$$ C^G_{k[G/H]}(X) \cong C_k \left( X_{(H)}/G \right). $$
\end{proposition}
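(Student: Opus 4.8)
The plan is to build a natural homeomorphism $C^G_{k[G/H]}(X) \cong C_k(X_{(H)}/G)$ by first understanding the ordered version and then passing to quotients. First I would observe that a $G$-equivariant embedding $f\colon k[G/H]\to X$ must send each copy of the orbit $G/H$ to a $G$-orbit in $X$ on which the stabilizers are conjugate to $H$; that is, $f$ lands in $X_{(H)}$, and the image of each orbit-component is a single $G$-orbit isomorphic to $G/H$. Since a $G$-map $G/H\to X$ is determined by the image of the identity coset $eH$, which can be any point of $X^H$ whose stabilizer is exactly $H$ (call this set $X^H_{=H} = X^H \cap X_{(H)}$), and two such maps have the same image orbit iff the chosen points differ by an element of $W_G(H) = N_G(H)/H$, I get a natural identification $\Emb^G(G/H, X) \cong \{\,(x, gH) : x \in X^H_{=H}\,\}$ — or more usefully, the set of $G$-maps $G/H\to X$ with a given image orbit $\mathcal O$ is a torsor over $\Aut_G(G/H) = W_G(H)$, and the orbit $\mathcal O$ itself is an arbitrary point of $X_{(H)}/G$.

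Next I would make this precise at the level of ordered configurations: a $G$-equivariant embedding $k[G/H]\hookrightarrow X$ is the same data as an injection of the $k$ orbit-components into distinct $G$-orbits of $X_{(H)}/G$, together with a choice of $G$-isomorphism $G/H \to$ (each target orbit). The "distinct orbits" condition is exactly injectivity of the induced map $\{1,\dots,k\}\to X_{(H)}/G$; distinctness of orbits automatically guarantees the full map on $k[G/H]$ is injective, since distinct orbits are disjoint. This gives a natural homeomorphism
$$\Conf^G_{k[G/H]}(X) \cong \Conf_k(X_{(H)}/G) \times W_G(H)^k \cong \Emb\bigl(\{1,\dots,k\},\,X_{(H)}/G\bigr)\times W_G(H)^{k},$$
equivariantly for the $\Aut_G(k[G/H])$-action, where $\Aut_G(k[G/H]) \cong W_G(H) \wr \Sigma_k$: the $\Sigma_k$ permutes the $k$ orbit-components (hence permutes the $k$ points of $\Conf_k(X_{(H)}/G)$ and the $k$ factors of $W_G(H)^k$ simultaneously), and the $W_G(H)^k$ acts on $W_G(H)^k$ by left translation in each factor while fixing $\Conf_k(X_{(H)}/G)$.

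Then I would quotient by $\Aut_G(k[G/H])$. Quotienting first by the normal subgroup $W_G(H)^k$ kills the $W_G(H)^k$ factor entirely (left translation is free and transitive in each coordinate), leaving $\Conf_k(X_{(H)}/G)$ with the residual $\Sigma_k$-action permuting the points; quotienting by the remaining $\Sigma_k$ yields $C_k(X_{(H)}/G)$, exactly as claimed. The one point requiring care — and the main obstacle — is the topological verification that these bijections are homeomorphisms: I need the map $X^H_{=H} \to X_{(H)}/G$ (sending a point to its orbit) to be a quotient map realizing $X_{(H)}/G$ as $X^H_{=H}/W_G(H)$ with $W_G(H)$ acting freely, and I need the "choice of $G$-isomorphism to the target orbit" to vary continuously, i.e. the bundle $\Emb^G(G/H,X) \to X_{(H)}/G$ is (non-canonically, locally) trivial with fiber $W_G(H)$. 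This is standard for the topological quotient — given an orbit $\mathcal O\cong G/H$, a point of $\Emb^G(G/H,X)$ over it is a point of $\mathcal O^H$, and $\mathcal O^H$ is a $W_G(H)$-torsor — but assembling it globally over $X_{(H)}/G$ is the content that makes the homeomorphism (rather than just bijection) claim go through. I would handle this by noting $X_{(H)} \cong G\times_{N_G(H)} X^H_{=H}$ as $G$-spaces (a standard fact, e.g. \cite[I.5]{tom-dieck-transformation-groups}), so that $X_{(H)}/G \cong X^H_{=H}/W_G(H)$ and the projection $\Emb^G(G/H,X)\cong X^H_{=H} \to X^H_{=H}/W_G(H)$ is the evident free-quotient map; the extension to $k$ points and the permutation action is then formal.
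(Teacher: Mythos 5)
Your approach reaches the correct conclusion but by a genuinely different and considerably more elaborate route than the paper. The paper works directly at the level of \emph{unordered} configurations: a point of $C^G_{k[G/H]}(X)$ is a subset $S\subseteq X$ isomorphic to $k[G/H]$ as a $G$-set, so $S\subseteq X_{(H)}$, and one simply sends $S$ to its image in $X_{(H)}/G$ (a $k$-element subset) and checks that taking preimages gives an inverse. No ordered configurations, no Weyl groups, no wreath products — the whole proof is four sentences. Your version, by contrast, first identifies $\Emb^G(G/H,X)\cong X^H_{=H}$, recognizes the $W_G(H)$-torsor structure over $X_{(H)}/G$, passes to the $k$-fold version as an $\Aut_G(k[G/H])\cong W_G(H)\wr\Sigma_k$-equivariant object, and quotients in two stages. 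This buys you a finer picture (the ordered version $\Conf^G_{k[G/H]}(X)$ and its bundle structure, which the paper doesn't need) at the cost of length.

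One inaccuracy worth flagging: the displayed equation
$$\Conf^G_{k[G/H]}(X)\ \cong\ \Conf_k(X_{(H)}/G)\times W_G(H)^k$$
asserts a \emph{global} product decomposition, which is false in general — the map $\Emb^G(G/H,X)\cong X^H_{=H}\to X_{(H)}/G$ is a principal $W_G(H)$-bundle given by quotienting a free action, and such bundles are typically nontrivial (e.g.\ $G=C_2$ acting on $\r^2$ by rotation, $H=e$: $X_{(e)}\to X_{(e)}/G$ is the nontrivial connected double cover of $\r^2\setminus\{0\}$). You do partly concede this later by describing the bundle as only ``(non-canonically, locally) trivial,'' but the displayed equation and that remark are inconsistent. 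Fortunately the error is harmless for what you actually use: you only need that quotienting a principal $W_G(H)^k$-bundle by $W_G(H)^k$ recovers the base, which holds whether or not the bundle is trivial. So the proof goes through once the offending display is replaced by the bundle statement rather than a product.
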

\begin{proof}
A point in $C^G_{k[G/H]}(X)$ is a subset $S\subseteq X$ which is isomorphic as a $G$-set to $k[G/H]$. Each point in $S$ has stabilizer conjugate to $H$, so $S$ is furthermore a subset of $X_{(H)}$. The set $S$ consists of $k$ orbits of type $G/H$, so its image under the projection $X_{(H)}\to X_{(H)}/G$ is a size $k$ subset of $X_{(H)}/G$. This defines a map $$ C^G_{k[G/H]}(X) \to C_k \left( X_{(H)}/G \right), $$
which is a homeomorphism. Indeed, the preimage of any $k$ element subset of $X_{(H)}/G$ is a subset $S\subseteq X_{(H)}\subseteq X$ of the above form, which defines an inverse map.
\end{proof}

The space $X_{(H)}/G$ can also be expressed in terms of $H$-fixed points of $X$ and their $W_GH$-action, as we show in the following lemma.
\begin{lemma}\label{Lem:modW}
For any $G$-space $X$,
$$ X_{(H)}/G \cong \Big( X^H \setminus \bigcup_{K>H} X^K \Big)/W_GH. $$
\end{lemma}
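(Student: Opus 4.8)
The plan is to first reinterpret the left-hand set in terms of stabilizers. Write $X_H := \{x\in X : G_x = H\}$ for the subspace of points whose stabilizer is \emph{exactly} $H$. I claim $X_H = X^H\setminus\bigcup_{K>H}X^K$: a point $x\in X^H$ has $H\leq G_x$, and if in addition $x\notin X^K$ for every subgroup $K$ with $H<K\leq G$, then taking $K=G_x$ forces $G_x=H$; conversely if $G_x=H$, then $x\in X^H$, and $x\in X^K$ would give $K\leq G_x = H$, contradicting $K>H$. So it suffices to produce a homeomorphism $X_{(H)}/G\cong X_H/W_GH$.

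Next I would set up the comparison map. Conjugation shows the normalizer $N_GH$ preserves $X_H$ (if $G_x=H$ and $n\in N_GH$ then $G_{nx}=nHn^{-1}=H$), and $H$ acts trivially on $X_H$, so $X_H$ carries a $W_GH = N_GH/H$-action. The inclusion $X_H\hookrightarrow X_{(H)}$ is $N_GH$-equivariant, hence descends to a continuous map $\varphi: X_H/W_GH\to X_{(H)}/G$. This $\varphi$ is a bijection: given $x\in X_{(H)}$ with $G_x = gHg^{-1}$, the point $g^{-1}x$ lies in $X_H$ and in the same $G$-orbit, which gives surjectivity; and if $x,y\in X_H$ with $y=gx$ for some $g\in G$, then $H = G_y = gG_xg^{-1} = gHg^{-1}$ forces $g\in N_GH$, so $x$ and $y$ lie in the same $W_GH$-orbit, which gives injectivity.

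The one point requiring care — and the place where finiteness of $G$ is used — is promoting $\varphi$ to a homeomorphism, which I would do by checking $\varphi$ is a closed map. Since conjugate subgroups have equal order, $X_H = X^H\cap X_{(H)}$ is closed in $X_{(H)}$, and hence so is each translate $gX_H$ for $g\in G$. If $C\subseteq X_H/W_GH$ is closed, its preimage $\widetilde C$ in $X_H$ is closed in $X_{(H)}$, and the preimage of $\varphi(C)$ under the quotient $X_{(H)}\to X_{(H)}/G$ is $\bigcup_{g\in G} g\widetilde C$, a \emph{finite} union of closed subsets of $X_{(H)}$, hence closed; therefore $\varphi(C)$ is closed. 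A continuous closed bijection is a homeomorphism, which finishes the proof. (Alternatively, one can identify $X_{(H)}\cong G\times_{N_GH} X_H$ as $G$-spaces — for finite $G$ this is just a disjoint union over cosets of $N_GH$ of copies of $X_H$ — and then pass to $G$-orbits to get $X_H/N_GH = X_H/W_GH$ directly; cf.\ \cite[Section I.5]{tom-dieck-transformation-groups}.)
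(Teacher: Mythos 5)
Your proof is correct, and it takes a mildly different route from the paper's. The paper asserts (citing tom Dieck for the definition of $X_{(H)}$) a $G$-equivariant homeomorphism $X_{(H)} \cong G \times_{N_GH} \bigl( X^H \setminus \bigcup_{K>H} X^K \bigr)$ and then simply passes to $G$-orbits, observing that $(G\times_{N_GH} Y)/G \cong Y/N_GH$ and that $H$ acts trivially. You instead construct the map directly at the orbit-space level, prove it is a bijection by elementary stabilizer arguments, and then upgrade it to a homeomorphism by showing it is closed — the place where you explicitly invoke finiteness of $G$ to get that the $G$-saturation $\bigcup_{g\in G} g\widetilde C$ is a finite union of closed sets. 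You also note the $G\times_{N_GH}(-)$ route as an alternative, which is exactly what the paper does. Your version is more self-contained and spells out the point-set topology; the paper's is shorter because it offloads the topological content to the stated equivariant homeomorphism. One small thing to be aware of (shared by both arguments): the claim that $X_H = X^H \cap X_{(H)}$ is closed in $X_{(H)}$ uses that $X^H$ is closed in $X$, which requires $X$ Hausdorff — harmless here since the lemma is only ever applied to $G$-manifolds, but worth noting given that the statement says "any $G$-space."
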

\begin{proof}
There is a $G$-equivariant homeomorphism
\begin{equation}\label{eq:phi} \varphi: X_{(H)} \to G \times_{N_GH} \Big( X^H \setminus
\bigcup_{K>H} X^K \Big) \end{equation}
and hence on orbits we have
\begin{align*}
X_{(H)}/G & 
\cong \Big( X^H \setminus \bigcup_{K>H} X^K \big)/N_GH
\cong \Big( X^H \setminus \bigcup_{K>H} X^K \big)/W_GH
\end{align*}
since $H\leq N_GH$ acts trivially on $X^H$.
\end{proof}

In Corollary \ref{Cor:CGSDecomp} we generalize Proposition \ref{Prop:CGOrbits} to a formula for $C^G_S(X)$ for an arbitrary $G$-set $S$. It follows from the next lemma, whose proof is omitted.

\begin{lemma}\label{Lem:CGProduct}
Let $S$ and $T$ be finite $G$-sets such that no orbit of $S$ is isomorphic to an orbit of $T$. Then,
$$ C^G_{S+T}(X)\cong C^G_S(X)\times C^G_T(X). $$
\end{lemma}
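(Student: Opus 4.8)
The plan is to prove $C^G_{S+T}(X) \cong C^G_S(X) \times C^G_T(X)$ when $S$ and $T$ share no common orbit type, by constructing an explicit homeomorphism and exhibiting its inverse. A point of $C^G_{S+T}(X)$ is an $\Aut(S+T)$-orbit of an equivariant embedding $S+T \hookrightarrow X$; equivalently, it is a $G$-invariant subset $U \subseteq X$ together with a $G$-set isomorphism $U \cong S+T$ (only recorded up to the $\Aut$-action, so really just the data of $U$ being abstractly isomorphic to $S+T$ as a $G$-set). The key observation is that because $S$ and $T$ have disjoint sets of orbit types, the subset $U$ canonically decomposes: write $U = U_S \sqcup U_T$ where $U_S$ is the union of the orbits in $U$ whose type occurs in $S$, and $U_T$ the union of those whose type occurs in $T$. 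This decomposition is forced (no choice involved), $G$-invariant, and one has $U_S \cong S$, $U_T \cong T$ as $G$-sets. This defines a map
$$
\Psi: C^G_{S+T}(X) \to C^G_S(X) \times C^G_T(X), \qquad U \mapsto (U_S, U_T).
$$

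Next I would construct the inverse. Given $(V, W) \in C^G_S(X) \times C^G_T(X)$ — i.e. $G$-invariant subsets $V, W \subseteq X$ with $V \cong S$ and $W \cong T$ as $G$-sets — I claim $V$ and $W$ are automatically disjoint: any point in $V \cap W$ would have a $G$-orbit contained in both $V$ and $W$, hence an orbit type occurring in both $S$ and $T$, contradicting the hypothesis. Therefore $V \sqcup W$ is a well-defined $G$-invariant subset of $X$ isomorphic to $S + T$, giving a point of $C^G_{S+T}(X)$. This assignment is clearly inverse to $\Psi$ on underlying sets. The final step is to check that both maps are continuous. For this it is cleanest to work at the level of ordered configuration spaces: fixing orderings, an equivariant embedding $S + T \to X$ is the same as a pair of equivariant embeddings $S \to X$ and $T \to X$ with disjoint images, and disjointness of images is automatic here, so $\Conf^G_{S+T}(X) \cong \Conf^G_S(X) \times \Conf^G_T(X)$ as a subspace of $X^{|S|} \times X^{|T|}$ — a homeomorphism essentially by definition of the subspace topology. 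One then checks this homeomorphism is equivariant for the $\Aut(S+T) \cong \Aut(S) \times \Aut(T)$ actions (using again that the orbit-type decomposition of any abstract isomorphism $S+T \to S+T$ must preserve the $S$-part and $T$-part, so $\Aut(S+T)$ really does split as this product) and descends to the claimed homeomorphism on quotients.

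The main obstacle, and the reason the lemma is not entirely trivial, is the identification $\Aut(S+T) \cong \Aut(S) \times \Aut(T)$ and the canonicity of the orbit-type splitting: one must verify that there are no "cross terms" — no $G$-set automorphism of $S + T$ mixing the orbits of $S$ with those of $T$ — which is precisely where the hypothesis that no orbit of $S$ is isomorphic to an orbit of $T$ is used. Once that is in hand, the topological statements are formal consequences of the subspace-topology description of ordered configuration spaces and the fact that passing to $\Sigma$- (here $\Aut$-) quotients commutes with finite products of the relevant form. (Indeed, since the paper omits the proof, the expectation is that a reader can reconstruct exactly this argument; the substance is entirely in the orbit-type bookkeeping.)
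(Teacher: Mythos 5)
The paper omits the proof of this lemma, so there is no author proof to compare against. Your argument is correct and is exactly the kind of reconstruction the authors evidently intended the reader to supply: the canonical orbit-type splitting $U = U_S \sqcup U_T$, the automatic disjointness of images (which, as you correctly note, uses that injective equivariant maps preserve stabilizers exactly, so the image of $S$ has only $S$-type orbits and the image of $T$ has only $T$-type orbits), the isomorphism $\Aut(S+T) \cong \Aut(S)\times\Aut(T)$ coming from the same orbit-type bookkeeping, and then the observation that the homeomorphism $\Conf^G_{S+T}(X)\cong \Conf^G_S(X)\times\Conf^G_T(X)$ is equivariant for the product group action and so descends to a homeomorphism on $\Aut$-quotients. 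Nothing is missing.
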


Combining \cref{Prop:CGOrbits} and \cref{Lem:CGProduct}, we obtain our desired decomposition formula.

\begin{corollary} \label{Cor:CGSDecomp}
Let $S$ be the finite $G$-set
$$ S = \coprod_{(H)} k_{(H)}[G/H], $$
where $k_{(H)}\geq 0$ for each conjugacy class of subgroups $(H)$. Then,
$$ C^G_S(X) \cong \prod_{(H)} C_{k_{(H)}} (X_{(H)}/G ). $$
\end{corollary}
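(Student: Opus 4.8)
The plan is to reduce the statement to the single-orbit-type case already handled by \cref{Prop:CGOrbits}, peeling off one conjugacy class of orbits at a time using \cref{Lem:CGProduct}. Since $G$ is finite there are only finitely many conjugacy classes of subgroups, say $(H_1),\dots,(H_m)$, so we may write $S = S_1 + \cdots + S_m$ with $S_j = k_{(H_j)}[G/H_j]$. The key combinatorial observation is that for $i\neq j$ no orbit of $S_i$ is isomorphic to an orbit of $S_j$: every orbit of $S_i$ is isomorphic as a $G$-set to $G/H_i$, every orbit of $S_j$ to $G/H_j$, and $G/H_i\cong G/H_j$ if and only if $(H_i)=(H_j)$.

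With this in hand I would argue by induction on $m$. Writing $S = S_1 + (S_2 + \cdots + S_m)$, the observation above shows that no orbit of $S_1$ is isomorphic to an orbit of $S_2 + \cdots + S_m$, so \cref{Lem:CGProduct} gives $C^G_S(X)\cong C^G_{S_1}(X)\times C^G_{S_2+\cdots+S_m}(X)$; the inductive hypothesis applied to the $G$-set $S_2+\cdots+S_m$ then yields $C^G_S(X)\cong \prod_{j=1}^m C^G_{S_j}(X)$. Finally, applying \cref{Prop:CGOrbits} to each factor gives $C^G_{S_j}(X) = C^G_{k_{(H_j)}[G/H_j]}(X)\cong C_{k_{(H_j)}}(X_{(H_j)}/G)$, and assembling these homeomorphisms produces the asserted identification $C^G_S(X)\cong \prod_{(H)} C_{k_{(H)}}(X_{(H)}/G)$.

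There is no substantive obstacle here; the argument is essentially bookkeeping on top of the two preceding results. The only points requiring a word of care are the degenerate situations: if $k_{(H_j)}=0$ for some $j$ then $S_j=\emptyset$ and the corresponding factor $C^G_{S_j}(X) = C_0(X_{(H_j)}/G)$ is a single point, so such factors may be dropped (or retained harmlessly), and the hypothesis of \cref{Lem:CGProduct} holds vacuously when one of the two $G$-sets is empty. I would also note that the product decomposition is manifestly natural in $X$, which is what is needed for the compatibility with stabilization maps used later.
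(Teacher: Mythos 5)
Your argument is exactly the paper's intended proof: the paper derives the corollary immediately from \cref{Prop:CGOrbits} and \cref{Lem:CGProduct} without further elaboration, and your induction on conjugacy classes is just the obvious way to spell that combination out. The observations about the degenerate case $k_{(H)}=0$ and about naturality in $X$ are correct and harmless additions.
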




Recall that $C_n(X)$ denotes the $G$-space of nonequivariant embeddings of $n$
points. The next proposition is the main result of this section, which describes
its fixed points in terms of the spaces $C_S^G(X)$ we have been studying above,
in the case that $X$ is a manifold.

\begin{proposition}\label{Prop:FixptConfDecomp}
For a $G$-manifold $M$ and $n\geq 0$, there is a disjoint union decomposition
$$ C_n(M)^G \cong \coprod_{|S|=n} C^G_S(M), $$
where the disjoint union runs over isomorphism classes of finite $G$-sets $S$ with cardinality $n$.
\end{proposition}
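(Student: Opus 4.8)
The plan is to construct an explicit homeomorphism between $C_n(M)^G$ and the disjoint union $\coprod_{|S|=n} C_S^G(M)$ by sorting a $G$-fixed configuration according to the isomorphism type of its underlying $G$-set. First I would unpack what a point of $C_n(M)^G$ is: an unordered configuration $\{x_1,\dots,x_n\}\subset M$ of $n$ distinct points which is preserved (as a set) by the $G$-action. Such a configuration, being a finite $G$-stable subset of $M$, is itself a finite $G$-set $S$ of cardinality $n$ under the restricted action, together with a $G$-equivariant injection $S\hookrightarrow M$ — in other words, precisely an unordered $S$-configuration for that particular $S$. So the map is: send a fixed configuration to its class in $C_S^G(M)$, where $S$ is the isomorphism type of the configuration as an abstract $G$-set. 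This is well-defined on isomorphism classes because relabeling $S$ by an automorphism is exactly the $\Aut(S)$-quotient defining $C_S^G(M)$.

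Next I would check this is a bijection. Surjectivity is immediate: given any finite $G$-set $S$ with $|S|=n$ and a $G$-equivariant embedding $f:S\to M$, the image $f(S)\subset M$ is a $G$-stable subset of cardinality $n$, hence a point of $C_n(M)^G$ mapping to the class of $f$. Injectivity: if two $G$-equivariant embeddings $f:S\to M$ and $g:T\to M$ have the same image $Y\subset M$, then $f^{-1}\circ g$ (or rather the bijection between $S$ and $T$ induced by matching up which element maps where) is a $G$-set isomorphism $S\cong T$, and after identifying $S$ with $T$ via this isomorphism, $f$ and $g$ differ by an element of $\Aut(S)$, so they represent the same point of $C_S^G(M)$. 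Since different isomorphism classes $S$ give disjoint pieces of the target, and two configurations with different underlying $G$-set types are genuinely different subsets, the map is a bijection onto the disjoint union.

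For the topology, I would argue that the map is a homeomorphism by exhibiting it as the quotient-space comparison of the two natural continuous maps downstairs. There is a continuous surjection $\Emb(\{1,\dots,n\},M)^{\text{(various $G$-set structures)}}\to C_n(M)^G$, and $C_n(M)^G$ carries the subspace topology from $C_n(M)=\Conf_n(M)/\Sigma_n$. The subspace $\Conf_n(M)^G$ of ordered configurations fixed by $G$ decomposes as a disjoint union over homomorphisms (equivalently, $G$-set structures on $\{1,\dots,n\}$) — this uses that $M$ is a manifold, in particular Hausdorff, so a $G$-action on a finite fixed subset is locally constant in the configuration, making the decomposition into open-and-closed pieces. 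Concretely, $\Conf_n(M)^G = \coprod_{\rho} \Emb^G((\{1,\dots,n\},\rho), M)$ where $\rho$ ranges over $G$-set structures, and passing to $\Sigma_n$-orbits groups the $\rho$'s into isomorphism classes $S$ and replaces $\Sigma_n$ by $\Aut(S)$ on each block, yielding $\coprod_{|S|=n} C_S^G(M)$ with the quotient topology, which agrees with the subspace topology from $C_n(M)$.

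The main obstacle I anticipate is the topological bookkeeping in the last step: verifying that $\Conf_n(M)^G$ really is the \emph{disjoint} (open-and-closed) union indexed by $G$-set structures on $\{1,\dots,n\}$, rather than just a set-theoretic partition. This is where the manifold (or at least Hausdorff, and ideally $T_1$ plus local connectedness / the absence of limit points among finite configurations) hypothesis enters: one must show that the "combinatorial type" of the $G$-action on a fixed configuration cannot jump under small perturbations, so that each type is both open and closed in $\Conf_n(M)^G$. Once that is in hand, commuting the disjoint union past the $\Sigma_n$-quotient and matching the quotient topology on each $C_S^G(M)$ with the subspace topology inherited from $C_n(M)$ is routine, and the stated homeomorphism follows.
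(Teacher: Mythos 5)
Your overall strategy is the same as the paper's: identify a $G$-fixed unordered configuration with its underlying $G$-set type $S$ to get a set-theoretic bijection onto $\coprod_{|S|=n}C_S^G(M)$, then upgrade to a homeomorphism by showing the decomposition is by open-and-closed pieces (working upstairs in ordered configurations and then passing to the $\Sigma_n$-quotient). The set-theoretic part of your argument is fine. But you explicitly flag the crucial step — that the combinatorial type of the $G$-action cannot jump under small perturbations, so that each piece of the preimage of $C_n(M)^G$ in $\Conf_n(M)$ is open — as an obstacle you ``anticipate'' rather than resolve, and that is precisely the content of the proposition: everything else is bookkeeping. The paper proves openness using the equivariant slice theorem, which gives a $G$-equivariant tubular neighborhood $\bigsqcup_g U_{i,g}\cong G\times_{H_i}T_{x_i}M$ around each orbit and lets one read off the isotropy of any nearby $G$-stable configuration directly.

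It is worth noting that your instinct that Hausdorffness should suffice is actually correct, and yields a proof more elementary than the paper's: given a $G$-stable $(x_1,\dots,x_n)$ with induced homomorphism $\rho\colon G\to\Sigma_n$ (so $g\cdot x_i=x_{\rho(g)(i)}$), pick pairwise disjoint open sets $U_i\ni x_i$ (Hausdorff), then shrink to $V_i=U_i\cap\bigcap_{g\in G}g^{-1}U_{\rho(g)(i)}$ using continuity of the finitely many maps $g\colon M\to M$. Any $G$-stable configuration $(y_i)$ with $y_i\in V_i$ then has $g\cdot y_i\in U_{\rho(g)(i)}$ and also $g\cdot y_i\in\{y_1,\dots,y_n\}$, and disjointness of the $U_j$ forces $g\cdot y_i=y_{\rho(g)(i)}$; hence $\rho$ is locally constant. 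That two-line argument is what is missing from your write-up. As it stands, the proposal identifies the correct route and the correct gap, but leaves the load-bearing step unproved.
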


\begin{proof}
Let $S = \coprod_{i\in I} G/H_i$ be a $G$-set of cardinality $n$.
Since $C_S^G(M)$ and $C_n(M)^G$ are given compatible quotient topologies, it
suffices to show that $\Conf_S^G(M)$ is open in $\Conf_n(M)^G$.
Let $x\in \Conf_S^G(M)$ be an ordered configuration consisting of orbits
$\mathcal{O}_i= \{ g\cdot x_i \}_{g\in G/H_i}$ for $i\in I$.
By the equivariant slice theorem (see e.g. \cite[Theorem I.2.1.1]{audin}),
there are open neighborhoods $U_{i,g}\ni g\cdot x_i$
and diffeomorphisms $\phi_i:\bigsqcup_g U_{i,g} \cong G\x_{H_i} T_{x_i}M$ with
$\phi_i(g\cdot x_i)= (g, 0)$. Then $U = \prod_{i,g}
U_{i,g}\subseteq M^{\x n}$ is an open neighborhood of $x$.

An element of $\Conf_n^G(M)\ints U$ is an $n$-tuple $(y_{i,g})$ where
$y_{i,g}\in U_{i,g}$, and
we may assume that the $U_{i,g}$'s are disjoint in $M$.
It suffices to show that $y_{i,g}$ has isotropy $H_i$.
Since each $U_{i,g}$ contains a single $y_{i,g}$ and $G$ acts on $\{ y_{i,g} \}$,
an element $g'\in G$ fixes $y_{i,g}$ if and only if it preserves $U_{i,g}\cong
\{ g \}\x T_{x_i}M \subseteq G\x_{H_i} T_{x_i}M$, and this occurs if and only if $g'\in H_i$.
\end{proof}

\begin{remark}
Though Proposition \ref{Prop:FixptConfDecomp} uses the theory of
(finite-dimensional) manifolds, we will investigate an analogous statement for $\rho^\infty$ in future work
and identify the pieces:
\begin{equation}\label{eq:rho-infty} C^G_{G/H}(\rho^\infty)\hteq B(W_GH). \end{equation}
These spaces are interesting because
Guillou and May \cite[Lem. 1.2]{GM17} identify $C_n(\rho^\infty)$ with the $n$-th
space in the $G$-equivariant little $\rho^\infty$-disks operad, which is
well-known to be $G$-equivalent to $B_G\Sigma_n$ (cf. \cite[Pg. 489]{CW91}).
In particular, the splitting here recovers the Lashof--May splitting
\cite[Theorem VII.2.7]{May96} of $(B_G\Sigma_n)^G$.
\end{remark}

\subsection{Equivariant stabilization maps}\label{SS:GConf}
Our goal in this subsection is to define an equivariant version of Construction
\ref{Constr:nonequivariant-stabilization}.

\begin{definition}\label{Def:HStab}
Let $H \leq G$ be a subgroup. A $G$-manifold $M$ is \emph{$H$-stabilizable} if
it is equivariantly homeomorphic to the interior of a $G$-manifold $W$ with boundary containing a point of isotropy group $H$. 
\end{definition}

\begin{construction}\label{Constr:sigmaGH}
Suppose $M$ is an $H$-stabilizable $G$-manifold of dimension $n$, and write $M \isom W^\circ$ where $W$ is a $G$-manifold with boundary containing a point $p \in \partial W$ with isotropy group $H$. By the equivariant slice theorem for manifolds with boundary \cite[Theorem 3.6]{Kan07}, there exists an open subset $U \subseteq W$ containing the orbit $\{ g\cdot p \st g\in G/H \}$ together with a $G$-equivariant diffeomorphism 
$$\phi: U\to \r_{\geq 0} \times (G\x_H V) $$ 
sending $g\cdot p$ to $(0, g,0)$, where $\r_{\geq 0}$ has the trivial action and $V \cong T_p(\partial M)$ is the $G$-representation determined by the action of $G$ on the tangent space of $\partial M$ at $p$. Let $b: \r_{\geq 0} \times (G\x_HV) \to \r_{\geq 0} \times (G\x_HV)$ be a smooth, $G$-equivariant bump function which is the identity in the $G\x_H V$ component and sends $(0,g,0)$ to $(1,g,0)$. Let $e: W \to W$ be the $G$-equivariant self-embedding defined by
$$e(x) = \begin{cases}
	x \quad & \text{ if } x \notin U, \\
	(\phi^{-1} \circ b \circ \phi)(x) \quad & \text{ if } x \in U.
\end{cases}$$
Let $g_1,\ldots,g_r$ range over the elements of $G/H$. We define 
\begin{equation}\label{eq:sigmaGH} \sigma_{G/H}: C_n(M) \to C_{n+|G/H|}(M)\end{equation}
by
$$\sigma(m_1,\ldots,m_n) = (e(m_1),\ldots,e(m_n),e(p),g_1e(p),\ldots,g_re(p)).$$
\end{construction}

\begin{remark}
Observe that in \cref{Constr:sigmaGH}, the bump function $\phi$, and hence $\sigma_{G/H}$, preserves isotropy type. Moreover, since $\sigma_{G/H}$ adds a single $G/H$ orbit, it descends to maps
\begin{align*}
C_n(M/G) & \to C_{n+1}(M/G)
\\C_n(M)^G & \to C_{n+|G/H|}(M)^G
\\C_S^G(M) & \to C_{S+[G/H]}^G(M).
\end{align*}
\end{remark}

Putting together Corollary \ref{Cor:CGSDecomp} and Proposition
\ref{Prop:FixptConfDecomp}, we expect equivariant homological stability of $M$ should be
related to nonequivariant homological stability of $M_{(H)}/G$. First (Lemma
\ref{lem:M_(H)/G-open-manifold}) we check that $M_{(H)}/G$ satisfies the
hypotheses of nonequivariant homological stability except for connectedness,
which is an additional assumption and not needed for the definition of
stabilization maps. Then (Lemma
\ref{Lem:sigma-comparison}) we check that the stabilization maps agree. In the
next section (\cref{Thm:CGSHomStab}) we will put these together to prove a form
of equivariant homological stability.

\begin{lemma} \label{lem:M_(H)/G-open-manifold}
If $M$ is an $H$-stabilizable smooth $G$-manifold for $H\leq G$, then
$M_{(H)}/G$ is an open smooth manifold.
\end{lemma}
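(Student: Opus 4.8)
The plan is to build the manifold-with-boundary structure on $M_{(H)}/G$ directly from the one on $M$, using Lemma~\ref{Lem:modW} to reduce to a statement about the fixed-point set $X^H$ with its $W_GH$-action. Since $M$ is $H$-stabilizable, write $M \isom W^\circ$ for a $G$-manifold $W$ with boundary containing a point of isotropy $H$; the key structural input will be the equivariant slice theorem (already invoked in Construction~\ref{Constr:sigmaGH} via \cite{Kan07}), which gives local models for the $G$-action near any orbit.

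First I would establish that $M^H$ is itself a smooth manifold, and more importantly that the ``singular part'' $\bigcup_{K>H}M^K$ is a closed subset of $M^H$, so that $M^H_{\mathrm{free}} := M^H \setminus \bigcup_{K>H}M^K$ is an \emph{open} submanifold of $M^H$. This follows from the equivariant slice theorem: near a point $x\in M^H$, the $G$-action is modeled on $G\times_{G_x}T_xM$, and points of $M^H$ with strictly larger isotropy are detected by the linear $G_x$-action on the slice, which cuts out a closed condition. Second, $W_GH$ acts on $M^H_{\mathrm{free}}$, and this action is \emph{free}: any point of $M^H_{\mathrm{free}}$ has isotropy exactly $H$ in $G$, hence trivial isotropy in $W_GH = N_GH/H$. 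Since $W_GH$ is finite and acts freely and smoothly on the manifold $M^H_{\mathrm{free}}$, the quotient $M^H_{\mathrm{free}}/W_GH$ is a smooth manifold; by Lemma~\ref{Lem:modW} this quotient is exactly $M_{(H)}/G$, so it is a smooth manifold.

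Third, I would check that $M_{(H)}/G$ is \emph{open}, i.e. has empty boundary. Here I would use the stabilizability of $M$: $M$ has no boundary (it is the interior $W^\circ$), so $M^H$ has no boundary, hence neither does the open submanifold $M^H_{\mathrm{free}}$, and passing to the free finite quotient preserves emptiness of the boundary. (If one wanted the stronger statement that $M_{(H)}/G$ is itself the interior of a compact manifold with boundary — which is what the next lemma presumably addresses — one would instead restrict the slice-theorem collar of $\partial W$ and take $H$-fixed points there; but for \emph{this} lemma, only smoothness and openness are asserted.)

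The main obstacle I anticipate is the bookkeeping around the singular strata: verifying carefully that $\bigcup_{K>H}M^K$ is closed in $M^H$ (so that removing it yields an open, boundaryless submanifold rather than something with corners or with missing lower-dimensional pieces causing trouble), and that the isotropy computation ``$G_x = H \Rightarrow (W_GH)_{[x]} = e$'' is exactly right. Both are local and handled by the slice theorem, but they are the place where the hypothesis ``$H$-stabilizable'' (as opposed to merely ``$M^H\neq\emptyset$'') and the precise definition of $X_{(H)}$ get used. The dimension count $\dim(M_{(H)}/G) = \dim M^H$ comes for free from freeness of the finite quotient.
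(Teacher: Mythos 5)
Your argument for the smooth manifold structure is correct and, if anything, slightly more careful than the paper's: the paper quotients $M_{(H)}$ directly by the free $W_GH = \Aut(G/H)$-action, leaving implicit that $M_{(H)}$ is a submanifold, while you route through $M^H$ via \cref{Lem:modW} and check that $M^H_{\mathrm{free}}$ is an open submanifold on which $W_GH$ acts freely. Both reduce to the same fact about free finite quotients.

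The gap is in your reading of ``open.'' In this paper (as in McDuff--Segal and Palmer, cf.\ \cref{Thm:HomStab}), an \emph{open manifold} means noncompact without boundary, not merely without boundary: noncompactness is what lets one push points ``towards infinity,'' and it is the real content of the lemma. The paper's proof is devoted almost entirely to it, and uses the $H$-stabilizability hypothesis essentially. Near a boundary point $p\in\partial W$ of isotropy exactly $H$, the equivariant slice theorem gives a collar chart $\r_{\geq 0}\x(G\x_H V)$; its isotropy-$(H)$ locus is $\r_{\geq 0}\x(G\x_H V^H)$, which after deleting the boundary face and passing to $G$-orbits becomes a chart $\r_{>0}\x V^H$ of $M_{(H)}/G$ that is missing its limit point $(0,0)$. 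This exhibits $M_{(H)}/G$ as noncompact. Your proof uses stabilizability only to conclude $\partial M=\emptyset$ and says nothing to rule out $M_{(H)}/G$ being a \emph{closed} manifold. You actually flagged this as the spot where $H$-stabilizability (rather than merely $M^H\neq\emptyset$) ought to matter, but the follow-through is missing.
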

\begin{proof}
Let $W = W_HG$.
To see that $M_{(H)}/G$ is a manifold, note that $M_{(H)}/G = M_{(H)}/W$ since
$M_{(H)}$ consists entirely of $G/H$-orbit types. Thus $W = \Aut(G/H)$ acts
freely on the manifold $M_{(H)}$, and its acts properly because $W$ is finite. Thus
$M_{(H)}/W$ is a (smooth) manifold.

For noncompactness, first we show noncompactness of $M_{(H)}$. The stabilizability assumption says that there is a
neighborhood of the manifold with boundary $W$ that is diffeomorphic to
$\r_{\geq 0}\x (G\x_H V)$, where $V$ is the tangent space in $\partial W$ of a
point $p\in \partial W$ of isotropy $H$ (see Construction \ref{Constr:sigmaGH}).
The subspace of isotropy $H$ in this neighborhood is $r_{\geq 0}\x (G\x_H V^H)$,
and taking away the boundary, we see that $M_{(H)}$ is missing a limit point,
namely $\{ 0 \}\x (G\x_H \{ 0 \}) = p$. Now passing to orbits $M_{(H)}/G$, we
have a neighborhood $\r_{>0}\x V^H$ with limit point $(0,0)$ that is also not in
$M_{(H)}/G$.
\end{proof}



\begin{lemma}\label{Lem:sigma-comparison}
Suppose $M$ is $H$-stabilizable.
Then the equivariant stabilization map \eqref{eq:sigmaGH} induces a map
$$ \bar{\sigma}_{G/H}: C_n(M_{(H)}/G)\to C_{n+1}(M_{(H)}/G) $$
that agrees with the nonequivariant stabilization map \eqref{eq:sigma}. 
\end{lemma}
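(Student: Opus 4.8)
The plan is to unwind \cref{Constr:sigmaGH} and \cref{Constr:nonequivariant-stabilization} and check that restricting the equivariant data to the locus of isotropy exactly $H$ and passing to $G$-orbits reproduces the nonequivariant construction for the open manifold $M_{(H)}/G$ verbatim. The starting point is the remark following \cref{Constr:sigmaGH}: since $\sigma_{G/H}$ preserves isotropy type, it induces a map $C^G_{n[G/H]}(M)\to C^G_{(n+1)[G/H]}(M)$, and under the homeomorphisms of \cref{Prop:CGOrbits} this is exactly the asserted map $\bar\sigma_{G/H}\colon C_n(M_{(H)}/G)\to C_{n+1}(M_{(H)}/G)$. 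So the content of the lemma is to identify $\bar\sigma_{G/H}$ with a stabilization map produced by \cref{Constr:nonequivariant-stabilization}.

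First I would descend the chart. Writing $M\isom W^\circ$ with $p\in\partial W$ of isotropy $H$, the equivariant slice theorem for manifolds with boundary (\cite[Theorem 3.6]{Kan07}, as in \cref{Constr:sigmaGH}) gives a $G$-equivariant diffeomorphism $\phi\colon U\to\r_{\geq 0}\times(G\x_H V)$ with $\phi(g\cdot p)=(0,g,0)$. A point $(t,[g,v])$ has isotropy conjugate to $H$ precisely when $v\in V^H$, so $\phi$ carries $W_{(H)}\cap U$ onto $\r_{\geq 0}\times(G\x_H V^H)$ and, taking $G$-orbits, restricts to a diffeomorphism $\bar\phi\colon (U_{(H)}/G)\to\r_{\geq 0}\times V^H$ sending the image $\bar p$ of $p$ to $0$, with $M_{(H)}/G$ corresponding to $\r_{>0}\times V^H$ near $\bar p$. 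Gluing the half-space $\r_{\geq 0}\times V^H$ onto $M_{(H)}/G$ along $\r_{>0}\times V^H$ therefore exhibits $M_{(H)}/G$ as the interior of a manifold with boundary with boundary point $\bar p$ and boundary chart $\bar\phi$, consistent with \cref{lem:M_(H)/G-open-manifold} and \cref{Lem:modW}; in particular this is a legitimate choice of data for \cref{Constr:nonequivariant-stabilization}.

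Next I would descend the bump function and the self-embedding. The $G$-equivariant bump function $b$ is the identity on the $G\x_H V$ factor, so it preserves $\r_{\geq 0}\times(G\x_H V^H)$ and descends to a smooth self-map $\bar b$ of $\r_{\geq 0}\times V^H$ that is the identity on the $V^H$ factor and sends $0$ to $(1,0)$, i.e., a bump function of exactly the type used in \cref{Constr:nonequivariant-stabilization}. Hence the $G$-equivariant self-embedding $e\colon W\to W$ restricts to $W_{(H)}$ and descends to a self-embedding $\bar e$ of $W_{(H)}/G$, which is precisely the self-embedding that \cref{Constr:nonequivariant-stabilization} builds from $\bar\phi$ and $\bar b$. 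Finally, a point of $C^G_{n[G/H]}(M)$ is a union of $n$ orbits of type $G/H$, corresponding under \cref{Prop:CGOrbits} to a configuration $\{\bar m_1,\dots,\bar m_n\}$ in $M_{(H)}/G$; the formula $\sigma_{G/H}(m_1,\dots,m_n)=(e(m_1),\dots,e(m_n),e(p),g_1e(p),\dots,g_re(p))$ sends it to these $n$ orbits pushed forward by $e$ together with the new orbit $G\cdot e(p)$, which on orbit spaces is the configuration $(\bar e(\bar m_1),\dots,\bar e(\bar m_n),\bar e(\bar p))$. This is exactly the defining formula of $\sigma$ in \cref{Constr:nonequivariant-stabilization}, so $\bar\sigma_{G/H}$ is an instance of that construction and hence agrees with the nonequivariant stabilization map.

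The step that needs genuine care, and the main (if modest) obstacle, is the chart bookkeeping in the second paragraph: one must ensure that the equivariant slice chart at $p$ is compatible with the manifold-with-boundary collar of $W$ along $\partial W$, so that the $G$-quotient of its isotropy-$H$ stratum is honestly a boundary collar for $M_{(H)}/G$ rather than merely an interior chart. This is provided by the boundary version of the slice theorem already invoked in \cref{Constr:sigmaGH}; the remaining identifications are formal unwindings of \cref{Constr:sigmaGH}, \cref{Constr:nonequivariant-stabilization}, and \cref{Prop:CGOrbits}.
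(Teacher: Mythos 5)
Your proof is correct and takes essentially the same approach as the paper: descend the slice chart, bump function, and self-embedding to $M_{(H)}/G$ and verify the result is an instance of \cref{Constr:nonequivariant-stabilization}. The paper's version compresses your chart bookkeeping to the single observation that near any $G/H$-orbit the quotient map looks like $G\times_H \r^n \to \r^n$ (collapsing $|G/H|$ Euclidean components to one), from which smoothness of $\bar b$ follows, whereas you spell out the slice-chart descent in full; the underlying argument is the same.
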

Note that Lemma \ref{lem:M_(H)/G-open-manifold} is used to guarantee the map
\eqref{eq:sigma} exists.
\begin{proof}
Since $\sigma_{G/H}:C_n(M)\to C_{n+|G/H|}(M)$ preserves isotropy type and adds a $G/H$ orbit, it induces a map $\bar{\sigma}_{G/H}^e:C_n(M_{(H)}/G)\to C_{n+1}(M_{(H)}/G)$, and the equivariant bump function $b$ induces a function $\bar{b}$ on $M_{(H)}/G$. First note from Lemma \ref{lem:M_(H)/G-open-manifold} that $M_{(H)}/G$ is a manifold.
To check $\bar{\sigma}_{G/H}$ agrees with the classical stabilization map, we must check that $\bar{b}$ is a bump function. 
It is clear that it has compact support. To check it is smooth, first note that the restriction of the smooth function $b$ to $M_{(H)}$ is smooth. To check it is smooth after taking orbits, recall that every orbit in $M_{(H)}$ has a neighborhood diffeomorphic to $G\x_H \r^n$, and taking orbits simply collapses the $|G/H|$ Euclidean components to one Euclidean component.
\end{proof}

\section{Equivariant homological stability}\label{Sec:Stab}

In this section, we prove our main theorems, \cref{MT:BredonFails} and \cref{MT:Stability}. Our proof is by reduction to the classical stability result of McDuff--Segal (\cref{Thm:HomStab}), which we recall in \cref{SS:MS}. We also discuss a weaker notion of homological stability (\cref{Cor:WeakStability}) in terms of finite generation; the relationship between finite generation stability and sequential stability is made precise in \cref{Prop:AlgStability}. Using nonequivariant homological stability, we prove \cref{MT:CSGStability} (\cref{Thm:CGSHomStab}) that the equivariant stabilization maps eventually induce isomorphisms in the homology of $S$-configuration spaces in \cref{SS:CGSStability}. We then use our decomposition of the $G$-fixed points of configuration spaces into $S$-configuration spaces, along with our Bredon-to-geometric-fixed-point reduction result (\cref{Lemma:Bredon-to-PhiG}), to prove Bredon homological stability in \cref{SS:BredonStab}.

\subsection{Nonequivariant homological stability}\label{SS:MS}

\begin{theorem}[{McDuff--Segal, as stated in \cite[Thm. 1.2]{Pal18} with $X=*$}]\label{Thm:HomStab}
Assume $M$ is an open, connected manifold with $\dim(M) \geq 2$. The stabilization map $\sigma:C_n(M) \to C_{n+1}(M)$ induces an isomorphism on integral homology in degrees $* \leq \frac{n}{2}.$
\end{theorem}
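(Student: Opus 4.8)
Since $M$ is open and connected with $\dim M\ge 2$, it is (as noted above) the interior of a smooth manifold $W$ with nonempty boundary; fix $p_0\in\partial W$, a collar, and take $\sigma$ to be the stabilization map of \cref{Constr:nonequivariant-stabilization} that pushes a new point in from $p_0$. The statement is classical (McDuff \cite{McD75}, Segal \cite{Seg79}), and in this paper we simply invoke \cite[Thm.~1.2]{Pal18} with trivial labels $X=\ast$; but to indicate how the argument runs, the plan is to prove by induction on the homological degree $d$ the sharper two-part claim that $\sigma_*\colon H_d(C_n(M))\to H_d(C_{n+1}(M))$ is surjective for $n\ge 2d-1$ and an isomorphism for $n\ge 2d$. (The surjectivity clause in one extra degree is what makes the induction close up.) The base case $d=0$ is immediate since $C_n(M)$ is connected for all $n$, as $M$ is connected and $\dim M\ge2$.

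The engine of the induction is a semi-simplicial resolution of $C_n(M)$. First I would build an augmented semi-simplicial space $A_\bullet(n)\to C_n(M)$ whose $q$-simplices consist of a configuration $x\in C_n(M)$ together with an ordered $(q+1)$-tuple of pairwise disjoint embedded arcs in $W$, each running from $p_0$ to a distinct point of $x$ and otherwise disjoint from $x$; the $i$-th face map deletes the $i$-th arc and the augmentation forgets all arcs. Two facts are needed. The first is that the augmentation $\|A_\bullet(n)\|\to C_n(M)$ is highly connected, with connectivity growing like $n/2$. The second is that sliding the $q+1$ marked points out along their arcs toward $p_0$ and discarding them identifies $A_q(n)$, compatibly with the face maps, with a space built from $C_{n-q-1}(M)$ (an ordered-configuration factor for a half-space near $p_0$, carrying a $\Sigma_{q+1}$-action that permutes the arcs, also enters), in such a way that on homology the $d^1$-differentials of the associated spectral sequence are assembled out of the stabilization maps.

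Given these, I would run the standard comparison: $\sigma$ is covered by a map $A_\bullet(n)\to A_\bullet(n+1)$ of augmented semi-simplicial spaces (adjoin one more arc, hitting the newly pushed-in point), hence induces a map of the two spectral sequences. On the $E^1$-page the column maps are isomorphisms or surjections in the relevant range --- this is the second fact together with the inductive hypothesis in degrees $<d$ and the surjectivity clause in degree $d$ for smaller indices --- while the first fact says the two augmentations agree on $H_*$ through a range of order $n/2$. The now-routine spectral-sequence bookkeeping, carried out exactly as in Randal-Williams's and Palmer's treatments, then upgrades this to surjectivity and isomorphism for $\sigma_*$ on $H_d(C_n(M))$ throughout $d\le n/2$, closing the induction.

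I expect the main obstacle to be the connectivity estimate for the arc resolution. Arranging the semi-simplicial space so that the face maps, the $\Sigma_{q+1}$-symmetry, and the ``slide out along the arcs'' homotopy equivalences are all simultaneously clean --- and in particular so that the passage between ordered arcs and unordered configurations is encoded correctly --- requires care; and the connectivity bound itself is the one genuinely geometric input, proved by a microfibration/``push the arcs apart'' argument that ultimately rests on the high connectivity of the complex of injective words together with $\dim M\ge2$ (which provides the room to separate arcs in their complements). By contrast, the spectral-sequence manipulation above, though somewhat intricate, is formal. (McDuff's original proof instead runs through the scanning map and a group-completion theorem; that route is available too but less quick to summarize.)
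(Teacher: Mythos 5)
The paper does not prove this statement: it is stated as a citation to \cite[Thm.~1.2]{Pal18} (and ultimately to McDuff and Segal) and used as a black box. You correctly flag this, and the sketch you append is a faithful summary of the standard modern argument via an augmented semi-simplicial ``arc resolution'' and spectral-sequence comparison, in the style of Randal-Williams and Palmer, including the correct induction scheme (surjectivity one degree beyond isomorphism) and the identification of the genuine geometric input as the connectivity estimate for the resolution. Since the paper itself offers no proof, there is nothing to compare against beyond the citation; your outline is consistent with the cited source, and the only caveat worth noting is that the precise form of the $q$-simplices (arcs allowed to end anywhere on a chosen boundary component, the $\Sigma_{q+1}$-equivariance, and the resulting identification of $E^1$-columns with $H_*(C_{n-q-1}(M))$ up to a degree shift) must be set up carefully for the $d^1$-differentials to literally be alternating sums of stabilization maps; this is handled in \cite{Pal18} but would need to be reproduced verbatim if one wanted a self-contained proof here.
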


Our goal for the rest of this subsection is to relate homological stability with respect to iterated applications of $\sigma$ with finite generation over $\z[\sigma_*]$. Since this does not depend on the fact that the groups that stabilize are homology groups, we take an abstract approach, presenting the translation for arbitrary sequences of modules that stabilize.

Fix a commutative noetherian ring $R$ and let 
$$ A_0 \xrightarrow\sigma A_1 \xrightarrow\sigma A_2 \xrightarrow\sigma \cdots $$
be a sequence of $R$-modules $A_n$ for $n\geq 0$ connected by maps $\sigma_n:A_n\to A_{n+1}$. Write 
$$ A = \bigoplus_{n\geq 0} A_n. $$
The collection of maps $\sigma_n$ assemble into an endomorphism $\sigma:A\to A$, and this map endows $A$ with the structure of a $R[\sigma]$-module. The next proposition relates finiteness conditions on $A$ and $\sigma$ to a stability condition on the sequence.

\begin{proposition} \label{Prop:AlgStability}
With $R$, $A$ and $\sigma$ as above, the following are equivalent:
\begin{enumerate}
\item $A_n$ is a finitely generated $R$-module for each $n$, and the sequence stabilizes: that is, there exists $N\geq 0$ such that for all $n\geq N$, $\sigma:A_n\to A_{n+1}$ is an isomorphism; and
\item $A$ is finitely generated as an $R[\sigma]$-module.
\end{enumerate}
\end{proposition}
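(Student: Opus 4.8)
The plan is to prove the two implications separately, treating $A = \bigoplus_{n\geq 0} A_n$ as a graded $R[\sigma]$-module where $\sigma$ has degree $1$. For the direction $(1)\Rightarrow(2)$, suppose each $A_n$ is finitely generated over $R$ and $\sigma:A_n\to A_{n+1}$ is an isomorphism for all $n\geq N$. I would take a finite generating set $G_n$ of $A_n$ as an $R$-module for each $n\leq N$, and claim that $\bigcup_{n\leq N} G_n$ generates $A$ over $R[\sigma]$. Indeed, any element of $A_m$ for $m\leq N$ is an $R$-combination of $G_m$, and any element of $A_m$ for $m>N$ is, by iterating the isomorphisms $\sigma:A_{m-1}\to A_m, \dots, \sigma:A_N\to A_{N+1}$, equal to $\sigma^{m-N}(a)$ for some $a\in A_N$, hence lies in $R[\sigma]\cdot G_N$. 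So $A$ is finitely generated over $R[\sigma]$.

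For the direction $(2)\Rightarrow(1)$, suppose $A$ is finitely generated over the noetherian ring $R[\sigma]$ (noetherian by the Hilbert basis theorem, since $R$ is noetherian). Since $\sigma$ is homogeneous, we may choose homogeneous generators $x_1,\dots,x_k$ with $x_i\in A_{d_i}$. Then $A_n$ is spanned over $R$ by the elements $\sigma^{n-d_i}(x_i)$ with $d_i\leq n$, a finite set, so each $A_n$ is finitely generated over $R$. It remains to show the sequence stabilizes. For this I would consider the graded submodules $K = \ker(\sigma:A\to A)$ and $Q = \coker(\sigma:A\to A)$; both are finitely generated over $R[\sigma]$ since $R[\sigma]$ is noetherian and $K$ is a submodule, $Q$ a quotient, of the finitely generated module $A$. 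But $\sigma$ acts as zero on $K$ (by definition $\sigma$ kills $\ker\sigma$... wait, not quite — rather, $\sigma$ restricted to $K$ lands in $K$ but need not be zero; however $K_n = \ker(\sigma:A_n\to A_{n+1})$, and an element of $K_n$ maps under $\sigma$ into $K_{n+1}\cap\im\sigma$). Let me instead argue: on $Q = A/\sigma A$, the action of $\sigma$ is zero by construction. A finitely generated graded $R[\sigma]$-module on which $\sigma$ acts as zero is finitely generated over $R$ and concentrated in finitely many degrees, so $Q_n = \coker(\sigma:A_{n-1}\to A_n) = 0$ for $n\gg 0$, i.e. $\sigma$ is eventually surjective. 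Similarly, $K = \ker\sigma$ satisfies $\sigma K = 0$ as a submodule of... no: for $a\in K_n$, $\sigma(a) = 0$, so indeed $\sigma$ acts as zero on $K$ too. Hence $K$ is concentrated in finitely many degrees, so $\sigma:A_n\to A_{n+1}$ is eventually injective. Taking $N$ large enough that both $K_n = 0$ and $Q_{n+1} = 0$ for $n\geq N$ gives that $\sigma:A_n\to A_{n+1}$ is an isomorphism for all $n\geq N$.

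The main obstacle is the argument that $\sigma$ is eventually an isomorphism in $(2)\Rightarrow(1)$: one must correctly identify the kernel and cokernel of $\sigma:A\to A$ as graded $R[\sigma]$-modules annihilated by $\sigma$, deduce from noetherianity that they are bounded above in degree, and translate "bounded above" back into "$\sigma_n$ is injective (resp. surjective) for $n\gg 0$." The surjectivity half via $Q = A/\sigma A$ is clean. For injectivity one observes directly that $\sigma$ annihilates $\ker(\sigma:A\to A)$, so the same finiteness reasoning applies; alternatively one could note that $A$ noetherian implies the ascending chain of kernels of $\sigma^n$ stabilizes, but the graded argument is more transparent. Everything else is routine bookkeeping with graded modules.
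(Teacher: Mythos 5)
Your proof is correct. The direction $(1)\Rightarrow(2)$ is essentially the paper's argument. For $(2)\Rightarrow(1)$ you take a genuinely different, more uniformly ``graded'' route: you extract finite generation of each $A_n$ immediately from a choice of homogeneous generators of $A$, and you treat both $K=\ker\sigma$ and $Q=\coker\sigma$ on the same footing as finitely generated graded $R[\sigma]$-modules annihilated by $\sigma$, hence bounded above in degree. The paper instead establishes finite generation of $\coker\sigma$ over $R$ by descending a surjection $R[\sigma]^r\twoheadrightarrow A$ modulo $\sigma$, uses the decomposition $\coker\sigma\cong\bigoplus_n\coker\sigma_n$ to get eventual surjectivity, then handles injectivity via the ascending chain $K_i=\ker(A_M\to A_i)$ inside the single noetherian module $A_M$, and finally proves each $A_n$ is finitely generated by induction on $n$ using the short exact sequence $A_{n-1}\to A_n\to\coker(\sigma)|_{A_n}\to 0$. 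One small trade-off: your kernel argument invokes the Hilbert basis theorem (to make $R[\sigma]$ noetherian so that the submodule $K$ of the finitely generated $A$ is finitely generated), whereas the paper only needs noetherianity of the finitely generated $R$-module $A_M$. Both arguments are sound; yours is arguably cleaner once the graded framework is in place, while the paper's stays closer to elementary commutative algebra.
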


\begin{proof}
To see (1) $\Rightarrow$ (2), let $f_n:R^{r_n}\to A_n$ be a surjection. Define a map
$$ f:R[\sigma]^{r_0} \oplus \cdots \oplus R[\sigma]^{r_N} \to A $$
which maps the generators of $R[\sigma]^{r_n}$ to $A_n\subseteq A$ according to $f_n$ and extends $R[\sigma]$-linearly. To see that $f$ is surjective, note that by construction it surjects onto $\bigoplus_{n=0}^N A_n$, and it surjects onto $\bigoplus_{n=N}^\infty A_n \cong A_N \otimes \Z[\sigma]$ because it is a $\sigma$-linear map that surjects onto $A_N$.

To see (2) $\Rightarrow$ (1), let $f:R[\sigma]^r\to A$ be a surjection from a free $R[\sigma]$-module of finite rank $r$. Since $f(\sigma R[\sigma]^r)\subseteq \sigma A$, $f$ descends to a surjective map on quotients 
$$ \bar{f}: R^r \cong R[\sigma]^r/\sigma R[\sigma]^r \to A/\sigma A = \coker\sigma$$
which exhibits $\coker\sigma$ as a finitely generated $R$-module. 
Note that 
$$ \coker\sigma \cong \bigoplus_{n\geq 0} (\coker\sigma_n). $$
That $\coker\sigma$ is finitely generated thus implies that $(\coker\sigma)_n=0$ for sufficiently large $n$. It follows that the sequence is eventually surjective, that is, there exists a $M$ such that $\sigma_n$ is surjective for $n\geq M$. If we let
$$ K_i = \ker(A_M \to A_i), $$
for $i\geq M$, we obtain an increasing chain $K_M\subseteq K_{M+1}\subseteq\cdots$ of submodules of $A_M$. As a finitely generated module over the noetherian ring $R$, $A_M$ is noetherian. Therefore, there must exist an $N\geq 0$ at which point this increasing chain stabilizes, that is $K_N=K_{N+1}=\cdots$. It follows that for $n\geq N$, $\sigma_n$ is an isomorphism.

Now we show that $A_n$ is finitely generated over $R$ by induction on $n$. Since
$\coker(\sigma)|_{A_0} = A_0$, we must have that $A_0$ is finitely generated.
Now assume $n\geq 1$ and assume inductively that $A_{n-1}$ is finitely
generated. Consider the exact sequence
$$ A_{n-1}\too{\sigma} A_n \too{\sigma} \coker(\sigma)|_{A_n}\to 0. $$
Since $A_{n-1}$ and $\coker(\sigma)|_{A_n}$ are finitely generated by
assumption, we have that $A_n$ is also finitely generated.
\end{proof}

\begin{corollary}\label{Cor:WeakStability}
Let $M$ be a stabilizable open, connected manifold with $\dim(M) \geq 2$. If $H_d(C_k(M))$ is finitely generated as an abelian group for all $d \geq 0$, then
$H_*(\bigsqcup_{k \geq 0} C_k(M))$ is finitely generated as a $\z[\sigma_*]$-module. 
\end{corollary}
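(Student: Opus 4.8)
The plan is to deduce \cref{Cor:WeakStability} directly from \cref{Thm:HomStab} and \cref{Prop:AlgStability}. First I would set $R = \z$, which is noetherian, and take $A_k = H_d(C_k(M))$ for a fixed homological degree $d$, with $\sigma_k = \sigma_*$ the map induced by the stabilization map $\sigma: C_k(M) \to C_{k+1}(M)$ of \cref{eq:sigma}. Note that this stabilization map exists precisely because $M$ is stabilizable (\cref{Constr:nonequivariant-stabilization}), and the connectedness and $\dim(M)\geq 2$ hypotheses are exactly what \cref{Thm:HomStab} requires. Then $A = \bigoplus_{k\geq 0} H_d(C_k(M)) = H_d(\bigsqcup_{k\geq 0} C_k(M))$, using that singular homology sends disjoint unions to direct sums, and the endomorphism $\sigma$ assembled from the $\sigma_k$ is exactly $\sigma_*$ acting on this direct sum, giving it the structure of a $\z[\sigma_*]$-module.

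Next I would verify hypothesis (1) of \cref{Prop:AlgStability}. The hypothesis of \cref{Cor:WeakStability} gives us that $H_d(C_k(M))$ is finitely generated as an abelian group for every $k$, which is exactly the first half of (1). For the second half, \cref{Thm:HomStab} says $\sigma_*: H_d(C_k(M)) \to H_d(C_{k+1}(M))$ is an isomorphism whenever $d \leq k/2$, i.e.\ whenever $k \geq 2d$; so taking $N = 2d$, the sequence stabilizes in the sense of (1). Therefore \cref{Prop:AlgStability} applies and yields that $A = H_d(\bigsqcup_{k\geq 0} C_k(M))$ is finitely generated as a $\z[\sigma_*]$-module. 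Since this holds for every $d \geq 0$, we conclude that $H_*(\bigsqcup_{k\geq 0} C_k(M))$ is finitely generated over $\z[\sigma_*]$ in each degree, which is the assertion.

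This argument is essentially a bookkeeping exercise, so I do not anticipate a serious obstacle; the only points requiring a little care are (a) confirming that the assembled endomorphism of $\bigoplus_k H_d(C_k(M))$ really is induced by the space-level self-map $\sigma: C(M)\to C(M)$ of \cref{Eqn:BigStabIntro}, so that the $\z[\sigma_*]$-module structures match, and (b) making sure the stabilization threshold from \cref{Thm:HomStab} is applied in the right direction (it is the \emph{target} index $k$, or source index, that must be large). Neither is substantive. If one wanted to state \cref{Cor:WeakStability} at the level of the whole graded group rather than degreewise, one could equally well phrase it as: for each $d$, $H_d(C(M))$ is a finitely generated $\z[\sigma_*]$-module.
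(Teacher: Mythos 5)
Your proof is correct and is exactly the intended argument: the paper states \cref{Cor:WeakStability} immediately after \cref{Prop:AlgStability} without a separate proof, precisely because it is the specialization of that proposition to $R=\Z$, $A_k=H_d(C_k(M))$, with the stability hypothesis supplied by \cref{Thm:HomStab}. Your closing remark that the statement should be read degreewise (i.e.\ $H_d(C(M))$ is finitely generated over $\Z[\sigma_*]$ for each fixed $d$) is also the correct reading.
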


We note that the finite generation of $H_d(C_k(M))$ in \cref{Cor:WeakStability} follows from the finite generation of $H_d(M)$. First we need a lemma.

\begin{lemma} \label{lem:homology-orbits-finite}
Suppose $X$ has a free $G$-action, and $H_d(X;\Z)$ is a finitely generated
abelian group for all $d$. Then $H_d(X/G;\Z)$ is finitely generated.
\end{lemma}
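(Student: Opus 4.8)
The plan is to run the Cartan--Leray spectral sequence for the quotient map $X \to X/G$. Since $G$ is finite and acts freely on the Hausdorff space $X$, the map $X \to X/G$ is a regular covering map with deck group $G$. Consequently the singular chain complex $C_*(X;\Z)$ is a complex of free $\Z[G]$-modules (because $G$ acts freely on the set of singular simplices of $X$), and $C_*(X/G;\Z)\cong C_*(X;\Z)\otimes_{\Z[G]}\Z$. Choosing a resolution of $\Z$ by finitely generated free $\Z[G]$-modules --- which exists because $G$ is finite --- and running the standard double-complex argument yields a first-quadrant homology spectral sequence
$$ E^2_{p,q} = H_p\bigl(G;\, H_q(X;\Z)\bigr) \;\Longrightarrow\; H_{p+q}(X/G;\Z). $$

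The first step is to observe that every $E^2$ term is a finitely generated abelian group. By hypothesis $H_q(X;\Z)$ is finitely generated for each $q$, and for any finitely generated $\Z[G]$-module $M$ the group homology $H_p(G;M)$ is finitely generated, being a subquotient of $P_p\otimes_{\Z[G]}M$ for $P_*$ a finitely generated free resolution of $\Z$ over $\Z[G]$. The second step is convergence: for fixed total degree $d$ only the finitely many pairs $(p,q)$ with $p+q=d$ and $p,q\geq 0$ contribute, each $E^\infty_{p,q}$ is a subquotient of the finitely generated group $E^2_{p,q}$, hence finitely generated, and $H_d(X/G;\Z)$ admits a finite filtration with these $E^\infty_{p,q}$ as its associated graded pieces. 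An iterated extension of finitely generated abelian groups is finitely generated, so we conclude.

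There is no serious obstacle here; the only points needing minor care are the identification $C_*(X/G;\Z)\cong C_*(X;\Z)\otimes_{\Z[G]}\Z$, which rests on $X\to X/G$ being a covering (automatic for a free action of a finite group on a Hausdorff space; in our application $X$ is a manifold), and the finiteness of a free $\Z[G]$-resolution of $\Z$, which is exactly where finiteness of $G$ is used. If one prefers to bypass the chain-level discussion, one can instead invoke the Serre spectral sequence of the fibration $X \to X_{hG} \to BG$ together with the equivalence $X_{hG}\simeq X/G$ coming from freeness of the action, which gives the same spectral sequence.
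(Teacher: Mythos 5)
Your proof is correct and uses essentially the same approach as the paper: the paper runs the Serre spectral sequence for $X \to EG\times_G X \to BG$ with $EG\times_G X \simeq X/G$, which is precisely the Cartan--Leray spectral sequence you write down, and you even note this equivalence yourself at the end. The finite-generation argument at $E^2$ and the convergence step match the paper's reasoning.
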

\begin{proof}
There is a fibration $X\to EG\x_G X \to BG$, and
since the action of $G$ on $X$ is free, we have $EG\x_G X\hteq
X/G$. There is a Serre spectral sequence with local coefficients
$$ E_2^{*,*}= H_*(BG; H_*(X;\Z)) \Rightarrow H_*(X/G). $$
The $E_2$-page is the cohomology of the complex $C_*(EG;\Z)\otimes_{\Z[G]}
H_*(X;\Z)$ which is finitely generated in each degree by assumption.
Thus the $E_2$-term is finitely generated in each degree, and so is the
target.
\end{proof}

\begin{proposition}\label{Prop:MtoCnM}
If $H_d(M)$ is finitely generated for all $d \geq 0$, then so is $H_d(C_n(M))$ for all $d,n \geq 0$. 
\end{proposition}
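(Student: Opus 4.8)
The plan is to reduce everything to ordinary configuration spaces of nonequivariant manifolds. Because $\Sigma_n$ permutes the $n$ pairwise-distinct points of a configuration, it acts \emph{freely} on $\Conf_n(M)$; since $C_n(M)=\Conf_n(M)/\Sigma_n$, \cref{lem:homology-orbits-finite} reduces the claim to the assertion that $H_d(\Conf_n(M))$ is finitely generated for all $d,n\ge 0$.

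I would prove this by induction on $n$, the cases $n\le 1$ being immediate since $\Conf_0(M)=*$ and $\Conf_1(M)=M$. For the inductive step, use the Fadell--Neuwirth fibration
$$ \Conf_{n-1}(M\setminus\{m\})\longrightarrow \Conf_n(M)\xrightarrow{\ \mathrm{pr}\ }M $$
obtained by remembering only the last point. Two observations are needed. First, $M\setminus\{m\}$ is again a manifold with finitely generated homology in every degree: by excision $H_*(M,M\setminus\{m\})\cong H_*(\mathbb{R}^{\dim M},\mathbb{R}^{\dim M}\setminus\{0\})$ is finitely generated, so the long exact sequence of the pair shows $H_*(M\setminus\{m\})$ is finitely generated, and removing points one at a time shows the same for $M$ with any finite subset deleted; hence the fiber $\Conf_{n-1}(M\setminus\{m\})$ has finitely generated homology by the inductive hypothesis. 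Second, in the Serre spectral sequence
$$ E^2_{p,q}=H_p\big(M;\,H_q(\Conf_{n-1}(M\setminus\{m\}))\big)\Longrightarrow H_{p+q}(\Conf_n(M)), $$
which lies in the first quadrant, it is enough to know that each $E^2_{p,q}$ is finitely generated: then each $E^\infty_{p,q}$ is a subquotient of a finitely generated group, and $H_{p+q}(\Conf_n(M))$, as a finite iterated extension of these, is finitely generated.

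The one point that requires care — and the only real obstacle — is finite generation of $E^2_{p,q}=H_p(M;\mathcal{H})$ for a local system $\mathcal{H}$ on $M$ whose stalks are finitely generated abelian groups. This is exactly the cellular-chain estimate appearing in the proof of \cref{lem:homology-orbits-finite}, but with the finite group there replaced by $\pi_1 M$: when $M$ has the homotopy type of a CW complex with finitely many cells in each dimension, $C_*(\widetilde{M})$ is a complex of finitely generated free $\mathbb{Z}[\pi_1 M]$-modules, so $C_*(\widetilde{M})\otimes_{\mathbb{Z}[\pi_1 M]}\mathcal{H}$ is a complex of finitely generated abelian groups computing $H_*(M;\mathcal{H})$, which is therefore finitely generated. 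This finiteness hypothesis on $M$ holds automatically in all of our applications, where $M$ is the interior of a compact manifold (cf.\ \cref{MT:Stability}); the general manifold case reduces to the corresponding finiteness statement for the twisted homology of $\pi_1 M$, which one can also sidestep by running instead the Cohen--Taylor--Kriz--Totaro spectral sequence for $H^*(\Conf_n(M))$, whose $E^2$-page is assembled from the finitely generated groups $H^*(M;\mathbb{Z})$. Apart from this local-coefficient input, everything is a routine induction.
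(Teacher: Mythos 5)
Your opening move — reducing to $\Conf_n(M)$ by invoking \cref{lem:homology-orbits-finite} for the free $\Sigma_n$-action — is exactly what the paper does. From there the two arguments diverge: you induct on $n$ via the Fadell--Neuwirth fibration and the Serre spectral sequence with local coefficients, whereas the paper applies Poincar\'e duality for the (noncompact) manifold $\Conf_n(M)$ to pass to compactly supported cohomology $H^*_c(\Conf_n(M);\mathcal O)$, and then uses the spectral sequence of Petersen whose $E_1$-terms involve $H^*_c$ of the diagonal submanifolds $\overline{M(T)}\cong M^k$ inside $M^n$; these are again (noncompact) manifolds, so by Poincar\'e duality and K\"unneth their compactly supported cohomology is controlled by $H_*(M)$.

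You have correctly put your finger on the gap in the Fadell--Neuwirth route, but I don't think either of your proposed ways around it actually closes it for the proposition as stated. The hypothesis is only that $H_d(M;\Z)$ is finitely generated in each degree; it does \emph{not} imply that $M$ has the homotopy type of a CW complex with finitely many cells in each dimension, nor that it is finitely dominated, and it certainly does not control $H_p(M;\mathcal H)$ for an arbitrary local system $\mathcal H$ of finitely generated abelian groups on $M$ — the $\pi_1(M)$-action on the Serre $E^2$-page is genuinely new data not visible in $H_*(M;\Z)$. Restricting to $M$ the interior of a compact manifold is enough for the applications, but it proves a weaker statement than \cref{Prop:MtoCnM} claims. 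As for the ``sidestep'' via the Cohen--Taylor/Kri\v{z}--Totaro spectral sequence: over $\Z$ and for a general open (possibly non-orientable, non-algebraic) manifold this spectral sequence is not assembled from $H^*(M;\Z)$ alone. The version that does exist in this generality is precisely the one the paper cites (Petersen's), and its input is compactly supported cohomology of the diagonal strata with coefficients in the orientation sheaf — that is, it already has Poincar\'e duality built in, which is the ingredient your argument was trying to avoid. So the ``alternative route'' collapses back into the paper's route. To repair the Fadell--Neuwirth induction one would need to show directly that $H_*(M;\mathcal H)$ is finitely generated for the particular monodromy local systems arising from $H_*(\Conf_{n-1}(M\setminus\{m\}))$, and I don't see how to extract that from finite generation of $H_*(M;\Z)$ alone.
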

\begin{proof}
By Lemma \ref{lem:homology-orbits-finite}, it suffices to show that
$H_d(\Conf_n(M);\Z)$ is finitely generated. Since $\Conf_n(M)$ is a manifold,
its homology is Poincar\'e dual to cohomology with compact support
$H^d_c(\Conf_n(M);\mathcal{O})$ with coefficients in the orientation sheaf $\mathcal{O}$ (see
\cite[Example 2.9]{bredon-sheaf-theory}).
To study this, we apply \cite[Theorem 5.18]{petersen-generalized}, which gives a
spectral sequence that in our case specializes to
$$ E_1^{p,q} = \bigoplus_\attop{T\in J_{U_0}\\|T| = n-p}\bigoplus_{i+j=p+q} \til{H}^i(J(T);
H^j_c(\overline{M(T)}; \mathcal{O})) \implies H_c^{p+q}(\Conf_n(M); \mathcal{O}) $$
where $J_{U_0}$ is a subset of the set of partitions of $\{ 1,\dots,n \}$,
$J(T)$ is a finite CW complex, and $\overline{M(T)}$ is a closed submanifold of 
$$ \{ (x_1,..,x_n)\in M^n \st x_i=x_j \iff \text{$x_i = x_j$ if $i$ and $j$ are in the
same block of $T$} \}, $$
which is itself a closed submanifold of $M^n$. Closed manifolds have cohomology
that is finitely generated in each degree. Since the $E_1$ page of the spectral
sequence is finitely generated for each $(p,q)$ and $p,q\geq 0$ we have that the
target is finitely generated in each degree.
\end{proof}

\subsection{Instability for adding a trivial orbit}

If $M$ is $G$-stabilizable, then we may add a fixed point to obtain a $G$-equivariant stabilization map
$$\sigma_{G/G}: C_n(M) \to C_{n+1}(M)$$
whose underlying map
$$\sigma_{G/G}^e = \sigma : C_n(M) \to C_{n+1}(M)$$
is the classical stabilization map. In this section, we show that the Bredon homology of $C(M)$ cannot be stable with respect to this stabilization map alone. For simplicity of exposition, we consider the degree 0 case where $G=C_p$.




\begin{proposition} \label{Prop:H0Unstable}
For $G=C_p$ and $M$ an open $G$- and $e$-stabilizable manifold, the $\z[(\sigma_{G/G})_*]$-module $H_0^G(C(M);\mfz)$ is not finitely generated. 
\end{proposition}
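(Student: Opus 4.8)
The plan is to compute $H_0^G(C(M);\mfz)$ explicitly as a $\z[(\sigma_{G/G})_*]$-module using the decomposition of fixed points from \cref{Prop:FixptConfDecomp} together with the Bredon-to-geometric-fixed-points machinery, and to exhibit infinitely many generators. Since $G=C_p$ is a $p$-group with only two subgroups $e$ and $G$, \cref{Prop:FixptConfDecomp} gives $C_n(M)^G\cong\coprod_{a+pb=n}C^G_{a[G/G]+b[G/e]}(M)\cong\coprod_{a+pb=n}C_a(M^G)\times C_b(M_{(e)}/G)$ via \cref{Cor:CGSDecomp}, where $a$ counts fixed points and $b$ counts free orbits. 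The key structural point is that $H_0^G(C(M);\mfz)$ is governed by $\pi_0$ of the isotropy separation sequence; via \cref{Lemma:Bredon-to-PhiG} and the splitting of $\Phi^G H\mfz$ and the ordinary fixed point term, $H_0^G(-;\mfz)$ is built from $H_0$ of $C(M)$ itself and $H_0$ of $C(M)^G$. Concretely, one shows $H_0^G(C_n(M);\mfz)$ is free abelian with basis indexed (roughly) by the path components of $C_n(M)^G$ together with the single component of $C_n(M)$, i.e. by the set of pairs $(a,b)$ with $a+pb=n$ (using connectedness of $M^G$ and of $M_{(e)}/G$ — the latter holds because $M$ open and $G$ acting means $M_{(e)}$ is open dense, though one must be slightly careful here, or simply note $\pi_0$ is controlled by the $(a,b)$-indexed coproduct regardless).

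Next I would identify the action of $(\sigma_{G/G})_*$ on this basis. Since $\sigma_{G/G}$ "adds a fixed point near the boundary," on the level of fixed-point components it sends the $(a,b)$-component to the $(a+1,b)$-component; so $\sigma_{G/G}$ raises $a$ by one and fixes $b$. The grading-$n$ piece of $H_0^G(C(M);\mfz)$ has rank equal to $\#\{(a,b):a\geq 0,\ b\geq 0,\ a+pb=n\}=\lfloor n/p\rfloor+1$ (up to the contribution of the underlying/geometric-fixed-point bookkeeping, which contributes a bounded correction independent of $n$, or reorganizes the count but keeps it unbounded). Because $\sigma_{G/G}$ only increases $a$, it can never change $b$; hence for each fixed $b\geq 0$ the element corresponding to $(0,b)$ in degree $pb$ is not in the image of $\sigma_{G/G}$ — it cannot be hit from degree $pb-1$, whose components all have $a+pb'=pb-1$, forcing $b'<b$, so there is no $(−1,b)$ to map from. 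Thus $\coker((\sigma_{G/G})_*)$ is nonzero in infinitely many gradings $n=0,p,2p,\dots$, so by \cref{Prop:AlgStability} (the $(2)\Rightarrow(1)$ direction, contrapositive) $H_0^G(C(M);\mfz)$ is not finitely generated over $\z[(\sigma_{G/G})_*]$.

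The main obstacle I expect is pinning down $H_0^G(C_n(M);\mfz)$ precisely enough as an abelian group to read off the rank and the $\sigma$-action — i.e. making rigorous the claim that degree-zero Bredon homology is detected by path components of the space and of its fixed points. This requires unwinding the isotropy separation sequence in degree $0$ (and degree $-1$, which vanishes, to ensure no correction terms), using \cref{Prop:Green}/\cref{Prop:GFPVanish} to control $\Phi^G H\mfz$, and knowing that $H_0^G(Y;\mfz)$ for a $G$-CW complex $Y$ is the free abelian group on $\pi_0(Y/G)$ together with transfer/restriction data — so I would likely just invoke the standard description of $H_0$ of a Mackey-functor-coefficient Bredon homology and then apply the coproduct decomposition $C_n(M)^G\cong\coprod_{a+pb=n}(\cdots)$ directly. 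Once the rank count $\lfloor n/p\rfloor+1\to\infty$ and the "$\sigma_{G/G}$ fixes $b$" observation are in hand, the non-finite-generation conclusion via \cref{Prop:AlgStability} is immediate. The only other point needing a sentence of care is that $M_{(e)}/G$ (and $M^G$) need not literally be connected, but since we only need $\pi_0$ to have the right number of components modulo a bounded error, or since the problem reduces to the combinatorics of $(a,b)$-pairs in each grading, this does not affect the unboundedness of the cokernel.
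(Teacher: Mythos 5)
Your core combinatorial observation is exactly the paper's: via \cref{Prop:FixptConfDecomp}, the components of $C_n(M)^{C_p}$ are indexed by pairs $(a,b)$ with $a+pb=n$, and since $\sigma_{G/G}$ only increments $a$, the ``all free orbits'' component $(0,b)$ in each degree $pb$ can never be hit, so the cokernel is unbounded as $n\to\infty$. But your route to computing $H_0^G(C(M);\mfz)$ is genuinely different from the paper's. The paper writes down the cellular Bredon chain complex directly and uses the orbit-category relation $r^*x\sim px$ (coming from the transfer in $\mfz$) to obtain the explicit presentation $\Z[\sigma]\{x_0,x_1,\ldots,y_0\}/(px_i=\sigma^{pi}y_0)$, then reads off non-finite-generation. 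You instead propose to go through the isotropy separation sequence and geometric fixed points.

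Two points to flag in your version. First, \cref{Lemma:Bredon-to-PhiG} is the wrong tool here: it only says that membership in a Serre class for $\pi_*\Phi^K$ implies the same for Bredon homology, which is useful for the \emph{positive} stability theorem but cannot establish a \emph{lower} bound. What you actually need from the ISS is the other direction: since $\pi_{-1}$ of the proper homotopy orbits vanishes (everything in sight is connective), the ISS gives a $\sigma$-equivariant surjection $H_0^G(C(M);\mfz)\twoheadrightarrow \pi_0\Phi^{C_p}(H\mfz\sm C(M))\cong H_0\bigl(C(M)^{C_p};\pi_0\Phi^{C_p}H\mfz\bigr)$, and $\pi_0\Phi^{C_p}H\mfz\cong\Z/p\neq 0$. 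That target is free over $\Z/p[\sigma]$ on the infinite family of generators $(0,b)$, hence not finitely generated over $\Z[\sigma]$; since quotients of finitely generated modules are finitely generated, you are done. You gesture at this (``unwinding the ISS in degree $0$ and $-1$''), but naming the surjection would make the argument tight. Second, your claim that $H_0^G(C_n(M);\mfz)$ is free abelian on the path components is wrong: the transfer relation identifies the inclusion of a fixed-point component with $p$ times that component, so in fact $H_0^G(C_n(M);\mfz)\cong\Z\oplus(\Z/p)^{\lfloor n/p\rfloor}$, as one sees from the paper's presentation. This error happens not to affect the conclusion, since the cokernel of $\sigma$ is still unbounded, but it is a real structural mistake. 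Compared to the paper's chain-level computation, your ISS route is perhaps conceptually cleaner (it avoids carefully tracking the orbit-category relations) but requires knowing $\pi_0\Phi^{C_p}H\mfz\neq 0$; the paper's argument is more elementary and self-contained.
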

\begin{proof}
The cellular chain complex $C^G_*(C(M))$ computing Bredon homology (cf. \cref{SS:Bredon}) is given by
\begin{align*}
C^G_*(C(M)) &= \Big( \sum_{G/H} C_*(C(M)^H)\otimes\mfz(G/H) \Big)/\sim\\
&= \Big( C_*(C(M)^e) \otimes \mfz(C_p/e) \oplus C_*(C(M)^G) \otimes \mfz(C_p/C_p) \Big) /\sim,
\end{align*}
In the case $G=C_p$, the orbit category is generated by the projection $r:C_p/e\to C_p/C_p$ and automorphism $\gamma:C_p/e\to C_p/e$ which corresponds to the action of the generator of $C_p$. Therefore, in $C^G_*(C(M))$, we have the relations:
\begin{itemize}
\item $\gamma\cdot x\sim x$ for $x\in C_*(C(M)^e)$ and $\gamma\in C_p$ the generator of $C_p$, and 
\item $r^*x\sim px$ for $x\in C_*(C(M)^G)$, where $r^*:C(M)^G\to C(M)^e$ is the inclusion of fixed points.
\end{itemize} 
From \cref{Prop:FixptConfDecomp}, we have that the components of $C(M)^G$ are indexed by finite $G$-sets, so we have
$$ H_0(C(M)^{C_p}) \cong \ZZ[\sigma]\{ x_0, x_1, x_2, \ldots \}, $$
where $x_i$ represents the component corresponding to the $G$-set $i\cdot C_p/e$. The components of $C(M)$ are indexed by natural numbers, so we have 
$$ H_0(C(M)^e) \cong \ZZ[\sigma]\{ y_0 \}. $$
We conclude that $H^G_0(C(M))$ is given by
$$ H^G_0(C(M)) \cong \frac{ \ZZ[\sigma]\{ x_0,x_1,x_2,\cdots, y_0 \} }{ px_i = \sigma^i y_0}, $$
which is not finitely generated as a $\ZZ[\sigma]$-module.
\end{proof}

\subsection{Homological stability for $S$-configurations}\label{SS:CGSStability}


As above, let $M$ be a $G$-manifold. In this section, we fix a conjugacy class of subgroups $(H)$ and study stabilization maps for $[G/H]$ orbits. We will combine Corollary \ref{Cor:CGSDecomp} with classical homological stability applied to the nonequivariant space $M_{(H)}/G$, to obtain homological stability for $C_S^G(M)$.

\begin{theorem}\label{Thm:CGSHomStab}
Let $M$ be a $G$-manifold and $S$ a finite $G$-set containing $k$
copies of the orbit
$[G/H]$ for a conjugacy class of subgroups $(H)$. If $M$ is $H$-stabilizable and
$M_{(H)}/G$ is connected,
then the $(H)$-stabilization map of \eqref{eq:sigmaGH} induces an isomorphism
$$ (\sigma_{G/H})_*: H_d(C^G_S(M)) \to H_d(C^G_{S+[G/H]}(M)) $$
in integral homology in degrees $d\leq k/2$.
\end{theorem}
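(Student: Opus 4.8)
The plan is to reduce to the classical McDuff--Segal theorem (\cref{Thm:HomStab}) using the product decomposition of \cref{Cor:CGSDecomp} together with a Künneth argument. Write $S = \coprod_{(K)} k_{(K)}[G/K]$ with $k_{(H)} = k$, and set $N := M_{(H)}/G$ and $P := \prod_{(K)\neq (H)} C_{k_{(K)}}(M_{(K)}/G)$. By \cref{Cor:CGSDecomp} there are homeomorphisms $C^G_S(M) \cong C_k(N)\times P$ and $C^G_{S+[G/H]}(M)\cong C_{k+1}(N)\times P$, and I would first check that under these homeomorphisms the map $\sigma_{G/H}$ becomes a product. Decomposing a configuration $T\subseteq M$ by isotropy type, $\sigma_{G/H}(T) = e(T)\cup \mathcal O$, where $\mathcal O$ is the newly added $[G/H]$-orbit and $e$ is the self-embedding of \cref{Constr:sigmaGH}; since $e$ preserves isotropy type and $\mathcal O$ is a single orbit of type $G/H$, this is compatible with both product decompositions, and $\sigma_{G/H}$ corresponds to $\bar\sigma\times c_P$ with $\bar\sigma : C_k(N)\to C_{k+1}(N)$ the map induced on the $(H)$-stratum and $c_P = \prod_{(K)\neq(H)} C_{k_{(K)}}(\bar e_{(K)})$, where $\bar e_{(K)}$ is the self-embedding of $M_{(K)}/G$ induced by $e$.

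The next step is to recognize the two pieces. By \cref{Lem:sigma-comparison}, $\bar\sigma$ is precisely the classical stabilization map \eqref{eq:sigma} for the open manifold $N$ (which is a manifold by \cref{lem:M_(H)/G-open-manifold}, and connected by hypothesis). For $c_P$, the bump function $b$ in \cref{Constr:sigmaGH} can be deformed to the identity through $G$-equivariant self-embeddings of the same form; this induces an isotopy $\id\simeq \bar e_{(K)}$ for each $(K)$, hence a homotopy $\id\simeq C_{k_{(K)}}(\bar e_{(K)})$, so $c_P$ is homotopic to the identity and in particular induces the identity on homology. If $\dim N\geq 2$, \cref{Thm:HomStab} gives that $\bar\sigma_* : H_i(C_k(N))\to H_i(C_{k+1}(N))$ is an isomorphism for $i\leq k/2$; if $\dim N = 1$ then $N\cong\r$, all its configuration spaces are contractible and $\bar\sigma$ is trivially a homology isomorphism (the stabilizability hypothesis forces $\dim N\geq 1$, so these cases are exhaustive).

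Finally, I would feed this into the Künneth short exact sequence, which is natural in both variables. The map $\bar\sigma\times c_P$ induces a map of Künneth sequences for $C_k(N)\times P \to C_{k+1}(N)\times P$. In the submodule $\bigoplus_{i+j=d} H_i(C_k(N))\otimes H_j(P)$ every index has $i\leq d\leq k/2$, so $\bar\sigma_*$ is an isomorphism on $H_i(C_k(N))$ and $(c_P)_*$ is an isomorphism on $H_j(P)$, whence the map on the tensor term is an isomorphism; similarly for the quotient term $\bigoplus_{i+j=d-1}\Tor(H_i(C_k(N)), H_j(P))$, where now $i\leq d-1<k/2$. The five lemma then shows $(\sigma_{G/H})_*: H_d(C^G_S(M))\to H_d(C^G_{S+[G/H]}(M))$ is an isomorphism for $d\leq k/2$.

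The main thing to get right is the first step: that the equivariant stabilization map genuinely splits, under \cref{Cor:CGSDecomp}, as the classical stabilization map on the $M_{(H)}/G$-factor times homotopy equivalences on the other factors. This rests on $e$ preserving isotropy type (noted after \cref{Constr:sigmaGH}), on \cref{Lem:sigma-comparison} for the $(H)$-factor, and on the standard fact that a ``push a collar inward'' self-embedding is isotopic to the identity. The remaining bookkeeping---applying \cref{Thm:HomStab} and running the Künneth/five-lemma argument while tracking the range $d\leq k/2$---is routine once one observes that every homology degree of $C_k(N)$ contributing to total degree $\leq k/2$ is itself $\leq k/2$.
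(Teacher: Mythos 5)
Your proof takes essentially the same route as the paper: decompose $C^G_S(M)$ via Corollary \ref{Cor:CGSDecomp} as a product with $C_k(M_{(H)}/G)$ as the distinguished factor, identify the $(H)$-component of $\sigma_{G/H}$ with the classical stabilization map via Lemma \ref{Lem:sigma-comparison}, invoke McDuff--Segal on the manifold $M_{(H)}/G$, and finish with the K\"unneth sequence and the five lemma. Two differences are worth noting, both in your favor. First, you explicitly observe that under the product decomposition $\sigma_{G/H}$ is $\bar\sigma\times c_P$ where $c_P$ is \emph{not} the identity but the map induced on the other strata by the self-embedding $e$, and you justify that it induces the identity on homology by deforming the bump function to the identity through $G$-equivariant self-embeddings; the paper's displayed K\"unneth diagram implicitly uses the identity on the $C^G_{S'}(M)$-factor without remark, so you are filling a genuine (if small) elision. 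Second, you check the hypothesis $\dim(M_{(H)}/G)\geq 2$ required by Theorem \ref{Thm:HomStab} and dispatch the $\dim = 1$ case ($N\cong\mathbb{R}$, configuration spaces contractible) separately; the paper applies Theorem \ref{Thm:HomStab} without verifying this hypothesis. Both additions are correct, and the overall argument is sound.
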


\begin{proof}
Decompose the finite $G$-set $S$ as 
$$ S \cong k[G/H] \sqcup S', $$
where $S'$ is a finite $G$-set with no orbits isomorphic to $[G/H]$. \cref{Cor:CGSDecomp} allows us to decompose the space of unordered $S$-configurations in $M$ as 
$$ C^G_S(M) \cong C^G_{k[G/H]}(M) \times C^G_{S'} \cong C_k(M_{(H)}/G) \times C^G_{S'}(M). $$
By \cref{Cor:CGSDecomp}, we have 
\begin{align*}
H_*(C^G_S(M)) &\cong H_*(C_k(M_{(H)}/G)\times C^G_{S'}(M)), \\
H_*(C^G_{S+[G/H]}(M)) &\cong H_*(C_{k+1}(M_{(H)}/G)\times C^G_{S'}(M)).
\end{align*}

By naturality, we obtain a map of K\"unneth short exact sequences
$$ \adjustbox{scale=0.85,center}{
\begin{tikzcd}
\displaystyle\bigoplus_{i+j=d-1} \Tor_1\left(H_i(C_k(M_{(H)}/G)), H_j(C^G_{S'}(M))\right) \arrow[r] \arrow[d]
& \displaystyle\bigoplus_{i+j=d-1} \Tor_1\left(H_i(C_{k+1}(M_{(H)}/G)), H_j(C^G_{S'}(M))\right) \arrow[d] \\
H_d(C^G_S(M)) \arrow[r, "(\sigma_{G/H})_*"] \arrow[d]
& H_d(C^G_{S+[G/H]}(M)) \arrow[d] \\
\displaystyle\bigoplus_{i+j=d} H_i(C_k(M_{(H)}/G))\otimes H_j(C^G_{S'}(M)) \arrow[r] 
& \displaystyle\bigoplus_{i+j=d} H_i(C_{k+1}(M_{(H)}/G))\otimes H_j(C^G_{S'}(M))
\end{tikzcd}
}
$$
By \cref{Lem:sigma-comparison}, the top and bottom horizontal arrows in the above diagram are given by the classical stabilization maps $\sigma:C_k(M_{(H)}/G)\to C_{k+1}(M_{(H)}/G)$
for the manifold $M_{(H)}/G$, and so \cref{Thm:HomStab} implies that these maps are isomorphisms for $d\leq k/2$.
(Note that $M_{(H)}/G$ is not empty because we have assumed that there are $k$
points of isotropy type $(H)$.)
It follows from the five lemma that $(\sigma_{G/H})_*$ is an isomorphism in the range $d\leq k/2$.
\end{proof}

\subsection{Bredon homological stability}\label{SS:BredonStab}

We now return to the situation of ordinary configuration spaces.


For $K\leq G$, let 
\[ P_K = \Z[\sigma_{K/H} : H\subseteq K] . \]
Clearly, $P_K$ acts on $K$-Bredon cohomology $H^K_*(C(M))$, but we can also
consider the action of $P_G$ on $H^K_*(C(M))$ via the restriction
map $\res^G_K: P_G\to P_K$, which is defined as follows.
Given $H,K\leq G$, the double coset formula gives an isomorphism of
$K$-sets
$$ G/H\ \ \cong \bigsqcup_{KgH\in K\backslash G/H} K/(K\ints gHg^{-1}),$$
and so
\begin{equation}\label{eq:res}\res^G_K(\sigma_{G/H})\ \  = \prod_{KgH\in K\backslash
G/H}\sigma_{K/(K\ints gHg^{-1})}.\end{equation}

\begin{lemma} \label{lem:RK-to-RG}
Suppose $G$ is a Dedekind group (i.e., every subgroup is normal).
Then for all $K\leq G$, $P_K$ is finitely generated as a $P_G$-module, where the action is via the restriction map $\res^G_K:P_G\to P_K$.
\end{lemma}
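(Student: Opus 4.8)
The plan is to make \eqref{eq:res} completely explicit under the Dedekind hypothesis and then observe that $P_K$ is a ``bounded‑exponent'' ring extension of the image subring $\res^G_K(P_G)$, which is automatically module‑finite.

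First I would simplify the double‑coset formula \eqref{eq:res}. Since $G$ is Dedekind, every $H\leq G$ is normal, so $gHg^{-1}=H$ for all $g\in G$; hence each factor of $\res^G_K(\sigma_{G/H})=\prod_{KgH}\sigma_{K/(K\ints gHg^{-1})}$ equals $\sigma_{K/(K\ints H)}$. Moreover, normality of $H$ gives $KgH=(KH)g$, so the double cosets in $K\backslash G/H$ are precisely the right cosets of the subgroup $KH$, of which there are $[G:KH]$. Therefore
\[ \res^G_K(\sigma_{G/H})=\sigma_{K/(K\ints H)}^{\,[G:KH]}. \]
Specializing to a subgroup $H\leq K$ — so that $K\ints H=H$ and $KH=K$ — and writing $n:=[G:K]$, this reads
\[ \res^G_K(\sigma_{G/H})=\sigma_{K/H}^{\,n}\qquad(H\leq K). \]
In particular, every polynomial generator $\sigma_{K/H}$ of $P_K$ satisfies $\sigma_{K/H}^{\,n}\in R$, where $R:=\res^G_K(P_G)\subseteq P_K$.

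Next I would deduce module‑finiteness directly. The ring $P_K=\Z[\sigma_{K/H}:H\leq K]$ has $\Z$‑basis the monomials $\prod_{H\leq K}\sigma_{K/H}^{a_H}$. Writing each exponent $a_H=nq_H+r_H$ with $0\leq r_H<n$ and using $\sigma_{K/H}^{\,n}\in R$, every such monomial lies in $R\cdot\prod_{H}\sigma_{K/H}^{r_H}$. Hence $P_K$ is generated as an $R$‑module — and a fortiori as a $P_G$‑module via $\res^G_K$ — by the finite set of reduced monomials $\{\prod_{H\leq K}\sigma_{K/H}^{r_H}:0\leq r_H<n\}$. (Equivalently, each $\sigma_{K/H}$ is integral over $R$ via the monic relation $t^{\,n}-\sigma_{K/H}^{\,n}$, so $P_K$, being generated as an $R$‑algebra by finitely many integral elements, is a finitely generated $R$‑module — no noetherian hypothesis needed.)

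I do not expect a genuine obstacle here: the content is entirely in the bookkeeping with \eqref{eq:res}, and the one essential use of the hypothesis is that normality of every subgroup collapses the restriction of a single generator $\sigma_{G/H}$ to a power of a single generator of $P_K$. For non‑Dedekind $G$, $\res^G_K(\sigma_{G/H})$ is in general a product over several non‑conjugate subgroups $K\ints gHg^{-1}$, and there is then no reason for $P_K$ to be finite over $\res^G_K(P_G)$; this is why the statement is phrased for Dedekind groups.
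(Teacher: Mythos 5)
Your proof is correct and follows essentially the same route as the paper's: use normality to collapse the double-coset formula to $\res^G_K(\sigma_{G/H})=\sigma_{K/H}^{[G:K]}$ for $H\leq K$, then conclude module-finiteness from the fact that $P_K$ is a finitely generated algebra that is integral over the image of $\res^G_K$. You have merely spelled out the double-coset count and the explicit set of reduced monomials that the paper leaves implicit.
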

\begin{proof}
As $P_K$ is a finitely generated algebra, it suffices to show that $P_K$ is
integral over $P_G$. We will show that there is a power of every generator
$\sigma_{K/H}$ (for $K\leq H$) in the image of $\res^G_K$, which implies the same for every
monomial. Combining the Dedekind assumption with \eqref{eq:res} shows that
$\res^G_K(\sigma_{G/H}) = \sigma_{K/H}^{[G:K]}$. 
\end{proof}

\begin{remark} \label{rmk:abelian}
The ``Dedekind'' hypothesis is needed to ensure that $\res^G_K(\sigma_{G/H}) = \sigma_{K/H}^{[G:K]}$ in the last line of the previous proof. It is known \cite{Ded97} that a finite group is Dedekind if and only if it is abelian or 
isomorphic to $Q_8\x A$ where $A$ is a finite abelian group whose 2-component 
has the form $(\Z/2)^r$
for $r\geq 0$.

We remark that the conclusion of \cref{lem:RK-to-RG} holds for a more general class of nonabelian groups than those mentioned in the previous paragraph, but an explicit characterization is not known. For example, it can be checked that it holds for dihedral groups of order $2n$ for $n$ squarefree.
\end{remark}

\begin{lemma} \label{lem:PhiG-generated}
Assume $M$ is $H$-stabilizable and $M_{(H)}/G$ is connected for every $H\leq G$. For every $d$ there exists $N_d$ such that
$$ \pi_d(\Phi^G(H\underline{\Z}\sm C(M))) \isom P_G\cdot
\pi_d(\Phi^G(H\underline{\Z}\sm C_{\leq N_d}(M))). $$
Moreover, if $H_i(M_{(H)}/G)$ is a finitely generated abelian group for all $i\leq d$ and $(H)\leq G$, then
$\pi_d(\Phi^G(H\underline{\Z}\sm C_{\leq N_d}(M)))$ is a finitely generated abelian group.
\end{lemma}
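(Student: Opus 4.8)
The plan is to combine the fixed-point decomposition $C(M)^G\cong\coprod_{S}C^G_S(M)$ with the geometric fixed points identification $\Phi^G(H\underline{\Z}\sm C(M))\simeq \Phi^GH\underline{\Z}\sm\Sigma^\infty C(M)^G$ from \cref{SS:GEM}, and then reduce the finite-generation statement to the $S$-configuration stability theorem \cref{Thm:CGSHomStab}. First I would rewrite $\pi_d(\Phi^G(H\underline{\Z}\sm C(M)))$ as $\pi_d(\Phi^GH\underline{\Z}\sm\Sigma^\infty C(M)^G)$, and use \cref{Prop:Green} to split $\Phi^GH\underline{\Z}$ as a wedge of suspensions of $H\Z$; this expresses the homotopy group as a finite direct sum of ordinary homology groups $H_j(C(M)^G)$ for $j\le d$. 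By \cref{Prop:FixptConfDecomp}, $H_j(C(M)^G)\cong\bigoplus_{S}H_j(C^G_S(M))$, where $S$ runs over all finite $G$-sets; via \cref{Cor:CGSDecomp} each summand is $H_j$ of a product $\prod_{(H)}C_{k_{(H)}}(M_{(H)}/G)$.

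Next I would organize the indexing set of $G$-sets $S$ and identify which stabilization maps act on which summands. Since $G$ is abelian (hence Dedekind), conjugacy classes of subgroups are just subgroups; a finite $G$-set is a tuple $(k_H)_{H\le G}$ of multiplicities. The map $\sigma_{G/H}$ sends the summand indexed by $(k_H)$ to the one with $k_H$ increased by one, and by \cref{Lem:sigma-comparison} it acts on the factor $C_{k_H}(M_{(H)}/G)$ by the classical stabilization map while fixing the other factors. The key input is then \cref{Thm:CGSHomStab} together with a Künneth argument (as in the proof of that theorem): if some multiplicity $k_H$ exceeds $2d$, then $\sigma_{G/H}$ induces an isomorphism on $H_j$ for all $j\le d$ from the summand $(k_H)$ to $(k_H+1)$. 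Therefore, among the summands contributing to $H_j(C(M)^G)$ for $j\le d$, the ones with $\min_H k_H > 2d$ are all $P_G$-translates (indeed $\sigma_{G/H}$-translates) of ones with some multiplicity $\le 2d$; since $G$ has finitely many subgroups, there are only finitely many $G$-sets $S$ with every multiplicity $\le 2d$. Taking $N_d$ to be the cardinality of the largest such $S$ (i.e.\ $2d\cdot|\{H\le G\}|$, or any bound exceeding it) gives the asserted generation statement: every homology class in $\pi_d(\Phi^G(H\underline{\Z}\sm C(M)))$ is a $P_G$-multiple of one supported on configurations of size $\le N_d$.

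For the finite-generation clause, I would note that $\pi_d(\Phi^G(H\underline{\Z}\sm C_{\le N_d}(M)))$ is a finite direct sum (over the finitely many $G$-sets $S$ of cardinality $\le N_d$, and over the finitely many wedge summands of $\Phi^GH\underline{\Z}$ in the relevant range, using the connectivity in \cref{Prop:Green}) of homology groups $H_j\big(\prod_{(H)}C_{k_{(H)}}(M_{(H)}/G)\big)$ for $j\le d$. By the Künneth theorem and \cref{lem:M_(H)/G-open-manifold} each factor $M_{(H)}/G$ is an open manifold, so \cref{Prop:MtoCnM} applies: if $H_i(M_{(H)}/G)$ is finitely generated for all $i\le d$ and all $H$, then $H_i(C_{k_{(H)}}(M_{(H)}/G))$ is finitely generated, hence so is each $H_j$ of the product, hence so is the whole finite direct sum. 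I expect the main obstacle to be bookkeeping: carefully tracking how $\sigma_{G/H}$ permutes the summands indexed by $G$-sets and verifying, via the Künneth map-of-short-exact-sequences argument of \cref{Thm:CGSHomStab}, that $\sigma_{G/H}$ really is an isomorphism on each relevant summand once the $[G/H]$-multiplicity is large—and in particular making sure the bound $N_d$ is chosen uniformly so that every "large" summand is reachable by $P_G$ from a "small" one.
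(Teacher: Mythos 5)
Your proposal follows essentially the same route as the paper's proof: split $\pi_d(\Phi^G(H\mfz\sm C(M)))$ via $C(M)^G\cong\coprod_S C^G_S(M)$ and \cref{Prop:Green}, reduce to homology of $C^G_S(M)$, destabilize via \cref{Thm:CGSHomStab} once some orbit-multiplicity is large, and invoke \cref{Prop:MtoCnM} (with \cref{lem:M_(H)/G-open-manifold} and the K\"unneth formula) for the finite-generation clause. Two small corrections to the bookkeeping: the summands that destabilize are those with $\max_H k_H$ large (some $[G/H]$ is abundant), not $\min_H k_H$ large — which is what makes the exceptional set $\{S : \text{all } k_H\leq 2d\}$ finite, consistent with your own conclusion — and \cref{Prop:Green} gives a wedge of suspensions of $HA_i$ for various finitely generated abelian groups $A_i$ (not just $H\Z$), so the stability and finiteness must be tracked with $A_i$-coefficients, exactly as the paper does.
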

\begin{proof}
Using the splitting in \cref{Cor:CGSDecomp} we have
\[
\pi_d(\Phi^G(H\underline{\Z}\sm C(M))) \isom \pi_d(\Phi^GH\underline{\Z}\sm C(M)^G)
\isom \bigoplus_{G\text{-sets }S} \pi_d(\Phi^G(H\underline{\Z})\sm C_S^G(M)).
\]
By Proposition \ref{Prop:Green}, there is a finite collection of nonnegative integers $n_i$ and finitely generated abelian groups $A_i$ such that 
\[ \pi_d(\Phi^G(H\underline{\Z})\sm C_S^G(M))\isom \bigoplus_{i} H_{d-n_i}(C_S^G(M); A_i). \]

For fixed $i$ and $H$, Theorem \ref{Thm:CGSHomStab} says that there exists
$N_{d,i,H}$ such that
$$H_{d-n_i}(C_S^G(M); A_i) = \sigma_{G/H}\cdot H_{d-n_i}(C_{S-[G/H]}^G(M);
A_i) $$
whenever $S$ contains at least $N_{d,i,H}$ copies of $G/H$. For fixed $i$, there
are finitely many $G$-sets $S$ such that $H_{d-n_i}(C_S^G(M);A_i)$ is not in the
image of $\sigma_{G/H}$ for some $H$. Let $N_{d,i}$ be the maximum cardinality
of the exceptional $G$-sets, and
let $N_d = \max_i\{ N_{d,i} \}$. Then for every $S$ and $n_i$, we have
\begin{align*}
H_{d-n_i}(C_S^G(M); A_i) &\subseteq P_G\cdot \bigoplus_{|S'| \leq N_d} H_{d-n_i}(C_{S'}^G(M); A_i).
\end{align*}
Summing over $i$, we obtain $\pi_d(\Phi^G(H\underline{\Z}\sm C^G_S(M)))\subseteq
P_G \cdot \pi_d(\Phi^G(H\underline{\Z}\sm C_{\leq N_d}(M)))$.

This shows the first statement. For the second statement, combine Corollary \ref{Cor:CGSDecomp} and Proposition \ref{Prop:MtoCnM}, which holds for homology with $A_i$-coefficients because $A_i$ is a finitely generated abelian group.
\end{proof}

\begin{proposition}\label{Prop:BredonHomStab}
Let $G$ be a Dedekind group. For $K\leq G$, let $i_K^*$ denote restriction from
$G$-spaces to $K$-spaces.
Assume that for all $H\leq K \leq G$, $M$ is
$H$-stabilizable, $(i_K^*M)_{(H)}/K$ is connected,
and $H_i(i_K^*(M)_{(H)}/K)$ is a finitely generated abelian group for all $i\leq d+1$.
Then $H_d^K(C(M);\underline{\Z})$ is finitely generated over $P_G$ for all $K\leq G$.
\end{proposition}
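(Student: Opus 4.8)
The plan is to reduce the statement to the geometric-fixed-point computation already assembled in \cref{lem:PhiG-generated}, using \cref{Lemma:Bredon-to-PhiG} to pass from $\Phi^{(-)}$-homotopy to Bredon homology, and \cref{lem:RK-to-RG} to pass from $P_K$-finite generation to $P_G$-finite generation. More precisely, I would first observe that it suffices to prove the statement with $G$ replaced by $K$ throughout and $K=G$: that is, reduce to showing that $H_d^G(C(M);\mfz)$ is finitely generated over $P_G$ under the stated hypotheses for $H\leq G$, since each $H^K_d(C(M))=H^K_d(i^*_KC(M))$ is the $K$-Bredon homology of the $K$-manifold $i^*_KM$, which satisfies the same hypotheses, and $P_K$-finite generation upgrades to $P_G$-finite generation by \cref{lem:RK-to-RG} (here is where the Dedekind hypothesis enters, via that lemma, and via \eqref{eq:res}).

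Next, for the case $K=G$, I would apply \cref{Lemma:Bredon-to-PhiG} with the Serre class $\mathcal{C}$ taken to be the class of $P_G$-modules that are finitely generated over $P_G$ (one checks this is a Serre subcategory of the abelian category of $P_G$-modules, using that $P_G$ is noetherian — it is a polynomial ring in finitely many variables over $\Z$), applied not to a fixed spectrum but uniformly to the family of spectra $\Sigma^\infty_G C_{\leq n}(M)$, tracking the $P_G$-action. The hypothesis to verify is that $\pi_q\Phi^K(H\mfz\wedge \Sigma^\infty_G C(M))$ is finitely generated over $P_G$ for all $K\leq G$ and $q\leq d+1$; this is exactly the content of \cref{lem:PhiG-generated} (applied to each $K\leq G$ in place of $G$, using the hypothesis that $(i^*_K M)_{(H)}/K$ is connected and has finitely generated homology in degrees $\leq d+1$), together with \cref{lem:RK-to-RG} to see that $P_K$-finite generation of $\pi_q\Phi^K(H\mfz\wedge C(M))$ implies $P_G$-finite generation. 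The conclusion of \cref{Lemma:Bredon-to-PhiG} then gives that $H^G_q(C(M);\mfz)$ lies in $\mathcal{C}$, i.e.\ is finitely generated over $P_G$, for $q\leq d$.

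The one genuinely delicate point — and the step I expect to be the main obstacle — is that \cref{Lemma:Bredon-to-PhiG} as stated is a statement about a single $G$-spectrum $X$ with a fixed Serre class in an ambient abelian category, whereas here I need the whole argument to be natural for the $P_G$-action, so that the Serre class "finitely generated $P_G$-modules" is the right ambient category and all the long exact sequences, spectral sequences, and the Four Lemma mod $\mathcal{C}$ are sequences of $P_G$-modules. Concretely, one must check that the isotropy separation sequence for $H\mfz\wedge\Sigma^\infty_G C(M)$, the cellular filtration spectral sequence for $E\mathcal{P}_+\wedge(\text{--})$, and the self-duality identification $H^K_p(K/H_+\wedge S^n\wedge C(M))\cong H^H_{p-n}(C(M))$ all respect the $P_G$-module structure coming from the stabilization self-maps $\sigma_{G/H}:C(M)\to C(M)$ — this is plausible because $\sigma_{G/H}$ is a map of $G$-spaces and all the constructions in \cref{Sec:Eqvt} are functorial, but it needs to be stated carefully (the restriction map $\res^G_K$ of \eqref{eq:res} is precisely what makes the identification $H^K_p(K/H_+\wedge\cdots)\cong H^H_{p-n}(\cdots)$ $P_G$-equivariant). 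Once naturality is in hand, the induction up the subgroup lattice in the proof of \cref{Lemma:Bredon-to-PhiG} goes through verbatim in the category of $P_G$-modules, since a subobject or quotient of a finitely generated module over the noetherian ring $P_G$ is again finitely generated, so the class $\mathcal{C}$ is Serre, and \cref{Lem:FourModC} and \cref{lem:sseq-iso-mod-serre} apply.

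Finally, to assemble $H^G_d(C(M);\mfz)$ finitely generated over $P_G$ for \emph{all} $d$ (not just a fixed one), I would note that the argument above, carried out for $d$, also requires the hypothesis only through degree $d+1$; since the hypotheses in the statement are assumed for all relevant degrees, running the argument for each $d$ separately gives the full conclusion.
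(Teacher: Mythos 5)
Your proof is correct and follows essentially the same route as the paper: apply \cref{Lemma:Bredon-to-PhiG} with the Serre class of finitely generated $P_G$-modules, verifying its hypothesis via \cref{lem:PhiG-generated} (applied to each $K\leq G$ in place of $G$) together with \cref{lem:RK-to-RG}. The $P_G$-naturality issue you flag as the ``delicate point'' is real and is silently assumed in the paper's proof; your resolution (all the relevant constructions are natural in the $G$-spectrum, hence compatible with the $P_G$-action coming from the self-maps $\sigma_{G/H}$, and $P_G$ is noetherian) is right, while your preliminary reduction to the case $K=G$ is harmless but unnecessary since \cref{Lemma:Bredon-to-PhiG} already yields the conclusion for all $K\leq G$ at once.
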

\begin{proof}
We will apply Lemma \ref{Lemma:Bredon-to-PhiG} with the Serre class of finitely generated $P_G$-modules. It suffices to check the hypothesis, namely that $\pi_i(\Phi^K(H\underline{\Z}\sm C(M)))$ is finitely generated over $P_G$ for all $K\leq G$ and $i\leq d+1$. 
We will apply Lemma \ref{lem:PhiG-generated} to $i^*_K(M)$. Checking the
hypotheses, first note that if $M$ is $H$-stabilizable as a $G$-manifold, then
$i^*_K(M)$ is $H$-stabilizable.
Thus $\pi_i(\Phi^K(H\underline{\Z}\sm C(i^*_K(M))))=\pi_i(\Phi^K(H\underline{\Z}\sm C(M)))$ is finitely generated over $P_K$. Now use Lemma \ref{lem:RK-to-RG} to get finite generation over $P_G$.
\end{proof}

\subsection{Simplifying the hypotheses of Proposition \ref{Prop:BredonHomStab}}
We will show (Lemma \ref{lem:fg-homology}) that the finite generation hypothesis in Proposition
\ref{Prop:BredonHomStab} follows from the assumption that $M$
the interior of a compact $G$-manifold $W$. Such finite type hypotheses are
common in the literature on this subject; see for example \cite{RW13} for the
non-equivariant version of our result. We also translate the connectedness hypothesis of Proposition
\ref{Prop:BredonHomStab} into a condition
which involves $M^H$ instead of $M_{(H)}$ (see Lemma
\ref{lem:hypothesis-translation}). The results are combined in \cref{thm:B}.
\begin{lemma} \label{lem:hypothesis-translation}
Suppose $G$ is Dedekind or a $p$-group, and let $W = W_GH$ denote the Weyl
group. If $M^H/W$ is connected
then $M_{(H)}/G$ (if nonempty) is
connected. Moreover,
$$ M_{(H)}/G \cong \Big(M^H \backslash \bigcup_{e < L \leq W} (M^H)^L\Big)/W. $$
\end{lemma}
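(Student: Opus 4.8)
The plan is to deduce everything from the identification of $M_{(H)}/G$ already established in Lemma \ref{Lem:modW}, which says $M_{(H)}/G \cong \big(M^H \setminus \bigcup_{K > H} M^K\big)/W_G H$. The first step is to translate the indexing set $\{K : K > H\}$ appearing there into the indexing set $\{L : e < L \leq W_G H\}$ appearing in the statement, using the correspondence between subgroups of $W_G H = N_G H / H$ and subgroups of $N_G H$ containing $H$. Concretely, subgroups $L \leq W_G H$ with $L > e$ correspond to subgroups $\widetilde L \leq N_G H$ with $\widetilde L > H$; and for a point $x \in M^H$, one has $x \in (M^H)^L$ (where $L$ acts through the residual $W_G H$-action) if and only if $x \in M^{\widetilde L}$. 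Here is where the group-theoretic hypothesis enters: to conclude $M^H \setminus \bigcup_{K > H} M^K = M^H \setminus \bigcup_{\widetilde L > H,\ \widetilde L \leq N_G H}M^{\widetilde L}$, I need that every $K > H$ with $M^K \cap M^H \neq \emptyset$ — equivalently, every $K$ such that $H$ and $K$ have a common fixed point — can be replaced by a subgroup of $N_G H$ in the union without changing it; when $G$ is Dedekind every subgroup is normal so $H \trianglelefteq G$ gives $N_G H = G$ and there is nothing to check, and when $G$ is a $p$-group one uses that $H$ is subnormal so that $N_G H > H$ strictly and an inductive/normalizer-tower argument applies. Restricting the union to subgroups between $H$ and $N_G H$ gives exactly $\big(M^H \setminus \bigcup_{e < L \leq W} (M^H)^L\big)/W$, which is the claimed formula.

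For the connectedness statement: by the displayed formula just obtained, $M_{(H)}/G$ is an open subspace of $M^H/W$ (it is the quotient of the open, $W$-invariant subset $M^H \setminus \bigcup_{e<L\leq W}(M^H)^L$ of $M^H$). I would argue that this open subset is actually dense in $M^H$: by Lemma \ref{lem:M_(H)/G-open-manifold} (applied with $H$ in place of $G$, or directly) the complement $\bigcup_{e<L\leq W}(M^H)^L$ is a finite union of fixed-point submanifolds, each a proper closed submanifold of $M^H$ of strictly smaller dimension (properness of each inclusion follows because a nontrivial finite group acting smoothly and effectively on a connected manifold cannot fix an open set). Removing a finite union of such lower-dimensional closed submanifolds from the connected manifold $M^H$ leaves a connected (indeed dense, path-connected) set, and hence its further quotient by the finite group $W$ is connected. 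This gives $M_{(H)}/G$ connected provided it is nonempty, which is the asserted conclusion.

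The main obstacle I anticipate is the first step — cleanly justifying the rewriting of the indexing set of the union, i.e.\ showing that only subgroups $\widetilde L$ with $H \leq \widetilde L \leq N_G H$ are relevant and that the group-theoretic hypothesis ("Dedekind or $p$-group") is exactly what licenses this. The Dedekind case is immediate ($N_G H = G$), but the $p$-group case requires the standard fact that proper subgroups of $p$-groups are properly contained in their normalizers, together with a short argument that any $K > H$ contributing a point to $M^H$ can be shrunk/replaced so as to lie inside $N_G H$ without shrinking the removed set $\bigcup_{K>H}M^K \cap M^H$. Once the indexing set is under control, both the formula and the connectedness assertion follow formally from Lemma \ref{Lem:modW}, Lemma \ref{lem:M_(H)/G-open-manifold}, and elementary point-set topology of manifolds.
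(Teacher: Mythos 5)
Your plan is on the right track and correctly identifies Lemma~\ref{Lem:modW} as the engine, and the group-theoretic hypothesis as the crux; but the paper's execution is slicker in two places and your connectedness argument has a small gap worth flagging.

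For the formula, instead of directly re-indexing the union, the paper applies Lemma~\ref{Lem:modW} \emph{twice}: once to the $G$-manifold $M$, and once to the $W$-manifold $M^H$. This yields two displayed formulas whose right-hand sides differ only in whether the union is indexed by $H < K \leq G$ or by $H < K \leq N_G(H)$, and the whole content of the group-theoretic hypothesis is one line: for Dedekind groups and $p$-groups, $H < K \leq G$ implies $H < N_K(H)$, so $M^K \subseteq M^{N_K(H)}$ with $H < N_K(H) \leq N_G(H)$, hence each term of the big union already lies in a term of the restricted union. There is no need for the ``inductive/normalizer-tower'' argument you anticipate as the main obstacle --- it is a single absorption step --- and stating it this way would tighten your write-up.

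For connectedness, the paper simply cites \cite[Theorem~I.5.14]{tom-dieck-transformation-groups} (the standard fact that the principal orbit stratum, modulo $G$, is connected when it is nonempty). Your direct argument --- remove a finite union of proper, lower-dimensional closed submanifolds and quotient by $W$ --- is essentially a re-derivation of that theorem in the case at hand, so it is a legitimate alternative. But as written it has a gap: you say you remove the fixed-point loci ``from the connected manifold $M^H$,'' whereas the lemma only assumes that $M^H/W$ is connected; $M^H$ itself may have several components permuted by $W$. To fix this, you would argue that each component of $M^H$ stays connected after removing the lower-dimensional loci, and that $W$ still acts transitively on the resulting components, so the quotient is connected. (In the eventual application in Theorem~\ref{thm:B} the stronger hypothesis ``$M^H$ connected'' is assumed, so the gap does not propagate, but the lemma as stated requires the extra step.) You should also be slightly careful with the claim that each $(M^H)^L$ is a ``proper closed submanifold of strictly smaller dimension'': that holds component-by-component only after discarding any $L$ acting trivially on $M^H$, which is exactly what the nonemptiness hypothesis on $M_{(H)}$ rules out, so it is worth saying so explicitly.
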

\begin{proof}
Applying Lemma \ref{Lem:modW} to the $G$-manifold $M$ and the $W$-manifold
$M^H$, respectively, we have
\begin{align*}
M_{(H)}/G & \cong \Big(M^H \backslash \bigcup_{H<K\leq G} M^K\Big)\big/ W
\\ (M^H)_{(e)}/W & \cong \Big( M^H \backslash \bigcup_{e < L\leq W} (M^H)^L
\Big)\big/ W \cong \Big(M^H\backslash \bigcup_{H < K \leq N_G(H)} M^K\Big)\Big/W.
\end{align*}
Because of the group-theoretic conditions, $H < K \leq G$ implies $H < N_K(H)$, and
hence $M_{(H)}/G\cong (M^H)_{(e)}/W$.

If $M_{(H)}$ is nonempty, then $(e)$ is the principal orbit type of $M^H$ as a
$W$-manifold.
Then \cite[Theorem I.5.14]{tom-dieck-transformation-groups} applied to
$M^H$ says that $(M^H)_{(e)}/W \cong M_{(H)}/G$ is connected.
\end{proof}

The next lemma is used to prove Lemma \ref{lem:fg-homology}.

\begin{lemma}\label{lem:local-coeffs}
Suppose $M$ is $G$-manifold that is the interior of a compact $G$-manifold
with boundary. Let $H\leq G$. If $A$ is a $\pi_1(M^H)$-module that is finitely
generated as an abelian group, then the homology with local coefficients
$H_q(M^H;A)$ is a finitely generated abelian group for every $q$.
\end{lemma}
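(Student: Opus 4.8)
The plan is to reduce to the assertion that $M^H$ has the homotopy type of a \emph{finite} CW complex; granting that, finite generation of $H_q(M^H;A)$ is a formal consequence of the definition of homology with local coefficients.

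So the only substantive step is geometric. Write $M = W^\circ$ with $W$ a compact $G$-manifold with boundary. Then $W^H$ is a compact smooth manifold with boundary $(\partial W)^H$: near a boundary point, the equivariant slice theorem for manifolds with boundary (\cite[Theorem 3.6]{Kan07}, as used in \cref{Constr:sigmaGH}) presents $W$ $G$-equivariantly as $G\x_H(\r_{\geq 0}\times V)$ with trivial action on the collar coordinate $\r_{\geq 0}$, and on interior points as $G\x_H V$; taking $H$-fixed points of these charts exhibits $W^H$ locally as $\r_{\geq 0}\times V^H$ near the boundary and as $V^H$ in the interior, so $W^H$ is a smooth submanifold with boundary, and it is compact because $W$ is. Since $M = W\setminus\partial W$ we get $M^H = W^H\setminus\partial(W^H) = (W^H)^\circ$, so by the collar neighborhood theorem the inclusion $M^H\hookrightarrow W^H$ is a homotopy equivalence. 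As a compact smooth manifold with boundary, $W^H$ has the homotopy type of a finite CW complex; hence so does $M^H$.

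Granting this, fix a finite CW structure on $M^H$ with $n_q$ cells in dimension $q$, let $\pi = \pi_1(M^H)$, and let $\til{M^H}\to M^H$ be the universal cover. The cellular chain complex $C_*^{\mathrm{cell}}(\til{M^H})$, with its deck action of $\pi$, is a complex of free $\Z[\pi]$-modules, supported in finitely many degrees and free of finite rank $n_q$ in degree $q$. By definition of homology with local coefficients in the $\pi$-module $A$,
$$ H_q(M^H;A)\ \cong\ H_q\!\left(C_*^{\mathrm{cell}}(\til{M^H})\otimes_{\Z[\pi]}A\right), $$
and the degree-$q$ term $C_q^{\mathrm{cell}}(\til{M^H})\otimes_{\Z[\pi]}A$ is isomorphic as an abelian group to $A^{\oplus n_q}$. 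Since $A$ is finitely generated as an abelian group, so is $A^{\oplus n_q}$, and hence so is its subquotient $H_q(M^H;A)$. (If $M^H$ is disconnected it has only finitely many components, again because $W^H$ is compact, and one runs the argument component by component.) The one place any real content enters is the geometric input that $W^H$ is a compact manifold with boundary and hence $M^H$ is homotopy finite; everything downstream is routine homological algebra, and that reduction is the step I would be most careful to state correctly.
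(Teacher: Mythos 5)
Your proof is correct, and it shares with the paper the essential geometric step: identifying $M^H$ as the interior of the compact fixed-point manifold $W^H$ and using a collar neighborhood to get a homotopy equivalence $M^H \simeq W^H$. Where you diverge is in how you convert compactness into finite generation. The paper views $A$ as a locally constant sheaf on $Z^H$, takes a finite good cover (citing Bott--Tu for existence of good covers, compactness for finiteness), and computes via the finite \v{C}ech complex, whose terms are finite sums of copies of $A$. You instead equip $M^H$ with a finite CW structure (compact smooth manifolds admit finite triangulations) and compute $H_q(M^H;A)$ as the homology of $C_*^{\mathrm{cell}}(\widetilde{M^H})\otimes_{\Z[\pi]}A$, whose degree-$q$ term is $A^{\oplus n_q}$. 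Both routes are valid; yours is the more standard textbook definition of local coefficient homology and avoids importing the good-cover machinery, at the cost of invoking smooth triangulability and the universal cover (and handling disconnectedness component-by-component, which you correctly flag). The paper's \v{C}ech route sidesteps universal covers entirely. Either way, the real content is the same compactness reduction, and your write-up identifies that clearly.
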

\begin{proof}
Let $Z$ be a compact $G$-manifold whose interior is $M$.
First, we observe that $M^H$ is the interior of $Z^H$;
this can be checked on Euclidean neighborhoods. Since $Z^H$ is a compact
manifold with boundary, it has a collar neighborhood, and this can be used to
produce a homotopy equivalence $M^H \hteq Z^H$. Since $\pi_1(M^H) = \pi_1(Z^H)$,
we may view $A$ as a sheaf of abelian
groups over $Z^H$, and consider an open cover $\mathscr{U}$ that trivializes
$A$. There is a refinement $\mathscr{U}'$ that is a good cover (see
\cite[Corollary 5.2]{bott-tu}) and since $Z^H$ is compact, we may take
$\mathscr{U}'$ to be finite. Since this is a good cover, homology with local
coefficients $H_q(Z^H;A)=H_q(M^H;A)$ agrees with \v{C}ech cohomology
$\check{H}_q(\mathscr{U}';A)$. The \v{C}ech complex
$$ \check{C}_q(\mathscr{U}';A) = \bigoplus_{(i_0,\dots,i_q)} A(U_{i_0}\ints
\dots \ints U_{i_q}) = \bigoplus_{(i_0,\dots,i_q)}A $$
is a finitely generated abelian group in each degree, and hence so is its homology.
\end{proof}

\begin{lemma} \label{lem:fg-homology}
Assume $G$ is abelian, and $M$ is the interior of a compact $G$-manifold.
Then $H_d(M_{(H)}/G; \Z)$ is finitely generated for all $d\in \Z,H\leq G$.
\end{lemma}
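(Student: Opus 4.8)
The plan is to reduce the claim about $H_d(M_{(H)}/G;\Z)$ to a statement about homology of $M^H$ with a finitely generated local coefficient system, which is exactly what Lemma \ref{lem:local-coeffs} controls. First I would invoke Lemma \ref{lem:hypothesis-translation}: since $G$ is abelian (hence Dedekind), that lemma identifies $M_{(H)}/G$ with $\big(M^H\setminus\bigcup_{e<L\leq W}(M^H)^L\big)/W$, where $W=W_GH$ acts freely on the open submanifold $M^H_{(e)} := M^H\setminus\bigcup_{e<L\leq W}(M^H)^L$. Next I would apply Lemma \ref{lem:homology-orbits-finite} to the free $W$-space $M^H_{(e)}$: it suffices to show $H_d(M^H_{(e)};\Z)$ is finitely generated in each degree, and then finite generation of $H_d(M^H_{(e)}/W;\Z) = H_d(M_{(H)}/G;\Z)$ follows. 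So the problem is reduced to proving that the open submanifold $M^H_{(e)}$ of $M^H$ obtained by deleting the higher fixed loci has finitely generated homology.

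To handle $M^H_{(e)}$, I would observe that $M^H$ is itself the interior of a compact manifold with boundary (the fixed points $Z^H$ of a compact $G$-manifold $Z$ with $Z^\circ = M$, as noted in the proof of Lemma \ref{lem:local-coeffs}), and that each $(M^H)^L$ for $L\leq W$ is likewise the interior of the compact submanifold $(Z^H)^L\subseteq Z^H$. Thus $M^H_{(e)}$ is $M^H$ with a finite union of closed submanifolds (up to the compact collar) removed. I would then use a Mayer--Vietoris / Gysin-type argument, or more cleanly a Poincaré--Lefschetz duality argument as in the proof of Proposition \ref{Prop:MtoCnM}: the homology of $M^H_{(e)}$ is dual to compactly supported cohomology, which fits into long exact sequences relating $H^*_c(M^H)$, $H^*_c(M^H_{(e)})$, and $H^*_c$ of a tubular neighborhood of $\bigcup_L (M^H)^L$. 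Since $M^H$ and each $(M^H)^L$ are interiors of compact manifolds, all of these terms are finitely generated in each degree, and hence so is $H^*_c(M^H_{(e)})$, hence $H_*(M^H_{(e)})$. Alternatively, one can argue directly: $M^H_{(e)}$ is the interior of a compact manifold-with-corners obtained from $Z^H$ by removing open tubular neighborhoods of the $(Z^H)^L$, and compact manifolds have finitely generated homology; this is perhaps the cleanest route and avoids the local-coefficient subtlety entirely.

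The main obstacle I anticipate is the bookkeeping around the union $\bigcup_{e<L\leq W}(M^H)^L$: this is a union of submanifolds of varying dimensions that may intersect each other, so "removing it" does not literally produce the interior of a compact manifold with boundary without some care (one wants to thicken to a neighborhood and take a manifold-with-corners complement, using that $Z$ is a smooth compact $G$-manifold so the fixed loci are smooth submanifolds in general position up to isotopy, or more simply use the equivariant triangulation/handle structure). An inclusion--exclusion spectral sequence (the homotopy colimit over the poset of nonempty intersections $\bigcap_{L\in\mathcal{L}}(M^H)^L$) sidesteps the transversality issue: each such intersection is again the interior of a compact manifold, hence has finitely generated homology in each degree, and there are finitely many subgroups $L$, so the spectral sequence computing $H_*$ of the complement has a finitely generated $E_1$-page in each bidegree and therefore finitely generated abutment. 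I would present the argument via this spectral sequence (or via Lemma \ref{lem:local-coeffs} applied after Lemma \ref{lem:homology-orbits-finite}, whichever the surrounding text has set up more directly), and conclude finite generation of $H_d(M_{(H)}/G;\Z)$ for all $d$ and all $H\leq G$.
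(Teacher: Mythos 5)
Your proposal is correct, and the two reduction steps — applying \cref{lem:hypothesis-translation} to identify $M_{(H)}/G$ with $\big(M^H\setminus\bigcup_{e<L\leq W}(M^H)^L\big)/W$, and then \cref{lem:homology-orbits-finite} to pass from the free $W$-quotient to the total space — are exactly the ones the paper uses. The difference is in the core finiteness argument for the arrangement complement $X=M^H\setminus\bigcup_{e<L\leq W}(M^H)^L$. The paper argues by hand: it restricts to $L$ ranging over products of elementary abelian subgroups (this is where the abelian hypothesis is used concretely, via $W\cong\prod_i\Z/p_i^{r_i}$), proves that each $U_L=M^H\setminus(M^H)^L$ has finitely generated homology by a Mayer--Vietoris sequence with the normal disk bundle of $(M^H)^L$ (using the Serre spectral sequence for the sphere bundle and \cref{lem:local-coeffs} to control local coefficients), and then inducts on the number of $U_L$'s intersected via the identity $U_L\cup U_{L'}=U_{LL'}$. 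Your route instead dualizes to compactly supported cohomology and uses the open--closed long exact sequence for $M^H_{(e)}\hookrightarrow M^H\hookleftarrow\bigcup_L(M^H)^L$, handling $H^*_c$ of the union via a spectral sequence over the finite poset of intersections $(M^H)^{L_1\cdots L_n}$, each of which is again a fixed-point manifold and hence the interior of a compact manifold. Both work; your version is somewhat slicker and avoids the explicit sphere-bundle step, but requires setting up the arrangement/poset spectral sequence for $H^*_c$ and keeping orientation-sheaf coefficients throughout, whereas the paper's Mayer--Vietoris induction is more elementary and only uses the Serre spectral sequence and \cref{lem:local-coeffs}. Your worry about the manifold-with-corners option is well founded — the fixed loci are not in general position — and your fallback to a poset spectral sequence (or the $H^*_c$ exact sequence) is the right way to sidestep it; you would just want to be explicit that the spectral sequence computes $H^*_c$ of the closed union (not directly $H_*$ of the complement), feeding into the open--closed exact sequence.
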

\begin{proof}
Let $W = W_GH$ be the Weyl group.
Lemma \ref{lem:hypothesis-translation} says that $M_{(H)}/G \cong X/W$ where
\begin{equation}\label{eq:bigcupX} X = M^H \backslash \bigcup_{e < L\leq W} (M^H)^L.
\end{equation}
By Lemma \ref{lem:homology-orbits-finite}, it suffices to show that
$H_*(X;\Z)$ is finitely generated in each
degree.

If $q:G\to W = G/H$ is the quotient, then $(M^H)^L = M^{q^{-1}(L)}$.
Abusing notation, we write $M^L = (M^H)^L$; also let
$U_L = M^H \backslash M^L$.
Since $M^L$ is a submanifold, there is a normal bundle $N(M^L)$ which is open,
and $N(M^L) \cup U_L \cong M^H$.
First we will show $H_d(U_L)$ is finite using a Mayer--Vietoris sequence.

The intersection term $N(M^L)\cap U_L$ is homotopy equivalent to the sphere bundle $S(N(M^L))$. There
is a Serre spectral sequence associated to the fibration $S^n\to S(N(M^L))\to
M^L$,
$$ E_2^{*,*} = H_*(M^L; H_*(S^n; \Z))\Rightarrow H_*(S(N(M^L));\Z) $$
where $\pi_1(M^L)$ may act nontrivially on $H_*(S^n;\Z)$. By Lemma
\ref{lem:local-coeffs}, the $E_2$ term is finitely generated in each degree, and
hence so is the target.

Now consider the Mayer--Vietoris sequence
$$ \dots\to H_{d+1}(M^H)\to H_d(S(N(M^L))) \to H_d(N(M^L)) \oplus H_d(U_L)\to
\dots. $$
The first term is finitely generated by Lemma \ref{lem:local-coeffs} and above we showed the second is
as well. Hence the third term is finitely generated; in particular so is $H_d(U_L)$.

Since $G$ is abelian, write $W = \prod_i \Z/p_i^{r_i}$.	In the union \eqref{eq:bigcupX}, it
suffices to restrict $L$ to products of elementary abelian subgroups
$\Z/p_i\cong \langle p_i^{r_i-1}\rangle\leq \Z/p_i^{r_i}$, since any nontrivial
subgroup $H\leq \Z/p_i^{r_i}$
contains this $\Z/p_i$. If $L\x L' \leq W$, we have $(M^H)^L \ints (M^H)^{L'} =
(M^H)^{L\x L'}$. More generally,
given $L = \prod_i L_i$ and $L' = \prod_i L'_i$, note
that $LL'\leq W$ is a product of all $L_i$ and $L'_i$ factors, with duplicates
removed. Thus we have
$$ U_L \cup U_{L'} = M^H \backslash ((M^H)^L \cap (M^H)^{L'}) = M^H \backslash
(M^H)^{LL'}. $$
We will show that 
$$X = \bigcap_\attop{L = L_1 \x \dots \x L_n\\L_i\text{ elem.ab.}} U_L $$
has finitely generated homology, by induction on the number of sets intersected.
The base case was shown above. Suppose any intersection of $n$ such sets has
finitely generated homology, and consider $U_{L_1}\ints \dots \ints U_{L_n}
\ints U_L$. There is a Mayer-Vietoris sequence
$$ \dots \to H_{d+1}((\cap_{i=1}^n U_{L_i})\cup U_L) \to H_d((\cap_{i=1}^n
U_{L_i})\cap U_L) \to H_d(\cap_{i=1}^n U_{L_i}) \oplus H_d(U_L) \to \dots. $$
The inductive hypothesis implies that the third term is finitely generated, so
in order to show the second term is finitely generated, it suffices to show the first
term is finitely generated. We have
$$ \Big(\bigcap_{i=1}^n U_{L_i}\Big) \cup U_L = \bigcap_{i=1}^n (U_{L_i} \cup
U_L) = \bigcap_{i=1}^n U_{L_i L} $$
and the inductive hypothesis shows this has finitely generated homology.
\end{proof}

\begin{theorem}\label{thm:B}
Let $G$ be an abelian group and let $M$ be a $G$-manifold that is the interior
of a compact $G$-manifold with boundary. Assume that for all $H\leq G$, $M$ is
$H$-stabilizable and $M^H$ is connected.
Then $H_d^H(C(M);\underline{\Z})$ is finitely generated over $P_G$ for all $H\leq G$.
\end{theorem}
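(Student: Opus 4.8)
The plan is to obtain \cref{thm:B} as essentially a corollary of \cref{Prop:BredonHomStab}: since an abelian group is in particular Dedekind, it suffices to verify, for every pair of subgroups $H\leq K\leq G$, the three hypotheses appearing in \cref{Prop:BredonHomStab} --- namely that $M$ is $H$-stabilizable, that $(i_K^*M)_{(H)}/K$ is connected, and that $H_i(i_K^*(M)_{(H)}/K)$ is a finitely generated abelian group for all $i\leq d+1$. The unifying observation is that all three conditions concern only $i_K^*M$ as a $K$-manifold, and that each hypothesis of \cref{thm:B} (abelian, $H$-stabilizable, $M^H$ connected, interior of a compact manifold) restricts along $H\leq K$: if $M = W^\circ$ for a compact $G$-manifold $W$, then $i_K^*M = (i_K^*W)^\circ$ with $i_K^*W$ a compact $K$-manifold; $i_K^*M$ is $H$-stabilizable whenever $M$ is (as noted in the proof of \cref{Prop:BredonHomStab}); and $(i_K^*M)^H = M^H$ is connected.

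Concretely, I would argue as follows. The first condition is literally a hypothesis of \cref{thm:B}. For the third, apply \cref{lem:fg-homology} to the abelian group $K$ and the $K$-manifold $i_K^*M = (i_K^*W)^\circ$, obtaining that $H_i((i_K^*M)_{(H)}/K;\Z)$ is finitely generated for all $i$, a fortiori for $i\leq d+1$. For the second, note that $(i_K^*M)^H = M^H$ is connected by assumption, hence so is its quotient by the Weyl group $W_KH = K/H$ (using that $K$ is abelian); since $K$ is Dedekind, \cref{lem:hypothesis-translation} applied to the $K$-manifold $i_K^*M$ then gives that $(i_K^*M)_{(H)}/K$ is connected as soon as it is nonempty. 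Nonemptiness follows from $H$-stabilizability: by \cref{Constr:sigmaGH} there is an open subset of $M$ equivariantly diffeomorphic to $\r_{\geq 0}\x(G\x_H V)$, and the points $(t,[e,0])$ with $t>0$ have $G$-isotropy exactly $H$, hence $K$-isotropy $H\cap K = H$, so $(i_K^*M)_{(H)}\neq\emptyset$. Having checked all three hypotheses, \cref{Prop:BredonHomStab} yields that $H_d^K(C(M);\underline{\Z})$ is finitely generated over $P_G$ for all $K\leq G$, which is precisely the statement of \cref{thm:B} after renaming $K$ to $H$.

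I do not anticipate a genuine obstacle, since all of the substantive work has been done in \cref{Prop:BredonHomStab}, \cref{lem:hypothesis-translation}, and \cref{lem:fg-homology}; the argument is bookkeeping about restriction to subgroups. The one point requiring care is the second condition: one must confirm both that restriction along $H\leq K$ does not enlarge the isotropy of the witnessing points (so that $(i_K^*M)_{(H)}$ remains nonempty) and that the group-theoretic hypothesis of \cref{lem:hypothesis-translation} is met --- which it is, because every subgroup of the abelian group $G$ is itself abelian, hence Dedekind.
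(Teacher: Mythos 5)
Your proposal is correct and takes essentially the same approach as the paper, whose entire proof is ``Combine \cref{Prop:BredonHomStab}, \cref{lem:hypothesis-translation}, and \cref{lem:fg-homology}''; you have simply made the bookkeeping explicit. In particular, your explicit check that $(i_K^*M)_{(H)}$ is nonempty (via the equivariant collar from $H$-stabilizability) supplies a detail that \cref{lem:hypothesis-translation} requires --- its conclusion carries an ``if nonempty'' caveat --- but which the paper leaves implicit.
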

\begin{proof}
Combine \cref{Prop:BredonHomStab}, \cref{lem:hypothesis-translation}, and
\cref{lem:fg-homology}.
\end{proof}

\begin{example} \label{ex:rho}
If $G$ is abelian and $V = n\rho_G$ is a sum of regular representations, the
hypotheses of Theorem \ref{thm:B} are satisfied. One could also check the
hypotheses of Proposition \ref{Prop:BredonHomStab} directly without using the
results of this subsection: by Lemma
\ref{Lem:modW} the space $V_{(H)}$ arises from the complement of hyperplanes in
$n\rho_G$. This complement is Alexander dual to a union of spheres, one for each
hyperplane.
\end{example}

\section{Generalizations and variants}\label{Sec:Gen}

Throughout this section, $G$ denotes a finite group in which every subgroup is normal. We have established stability for \emph{$\z$-graded} Bredon homology \emph{groups}. In this section, we discuss natural extensions of our results to
\begin{enumerate}
\item $RO(G)$-graded Bredon homology groups, and
\item $\z$-graded Bredon homology Mackey functors.
\end{enumerate}
Here, $RO(G)$ is the (group completion) of the ring of isomorphism classes of real orthogonal representations of $G$. 

\subsection{$RO(G)$-graded Bredon homological stability}

Let $V \in RO(G)$. We write $-V$ for the additive inverse of $V$ in $RO(G)$ and write $S^V$ for the representation sphere associated to $V$. 

\begin{definition}
Let $V \in RO(G)$. The $V$-th Bredon homology group of a $G$-space $X$ with coefficients in the Mackey functor $\uM$ is the group
$$\pi_0^G\left( S^{-V} \wedge H\uM \wedge X \right) \cong H_0^G( S^{-V} \wedge X; \uM).$$
\end{definition}

The following is well-known:

\begin{lemma}\label{Lem:RepSpheres}
If $V \in RO(G)$, then we have 
$$(S^V)^e = \Phi^e(S^V) = S^{|V|}, \quad  \Phi^G(S^V) = S^{|V^G|}.$$
\end{lemma}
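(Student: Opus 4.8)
The plan is to reduce everything to the case of an actual orthogonal representation $V$ and then use the two basic properties of geometric fixed points recalled in \cref{SS:GFP}, namely that $\Phi^e$ is the identity on the underlying spectrum and that $\Phi^G$ commutes with suspension spectra. First I would reduce to the case where $V$ is an honest representation (not a virtual one): writing $V = V_1 - V_2$ with $V_1, V_2$ actual representations, one has $S^V \simeq S^{V_1} \wedge S^{-V_2}$, and since $\Phi^e$ and $\Phi^G$ are exact monoidal functors, it suffices to treat $S^{V_1}$ and $S^{V_2}$ separately; the dimension bookkeeping $|V| = |V_1| - |V_2|$ and $|V^G| = |V_1^G| - |V_2^G|$ is then automatic. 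So assume $V$ is an actual representation.

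For an actual representation $V$, the representation sphere $S^V$ is the one-point compactification of $V$, which is a based $G$-space, and $\Sigma^\infty_G S^V$ is the corresponding suspension $G$-spectrum. The underlying nonequivariant space of $S^V$ is the one-point compactification of the underlying vector space, which is homeomorphic to $S^{|V|}$; this gives the first identity $(S^V)^e \cong S^{|V|}$ on spectra, and since $\Phi^e \Sigma^\infty_G X \simeq \Sigma^\infty X^e$ (the $e$-case of the proposition in \cref{SS:GFP}), we also get $\Phi^e(S^V) \simeq S^{|V|}$. For the last identity, apply the proposition $\Phi^G \Sigma^\infty_G X \simeq \Sigma^\infty(X^G)$ with $X = S^V$: the fixed points of a one-point compactification are the one-point compactification of the fixed points, i.e. $(S^V)^G \cong S^{V^G}$, which is homeomorphic to $S^{|V^G|}$, giving $\Phi^G(S^V) \simeq \Sigma^\infty S^{|V^G|} = S^{|V^G|}$.

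There is essentially no hard step here — the lemma is ``well-known'' precisely because it is an immediate consequence of the monoidality and suspension-spectrum compatibility of geometric fixed points, together with the elementary topological fact that fixed points commute with one-point compactification of a linear representation (the point at infinity is automatically fixed, and $(V^+)^G = (V^G)^+$ since $G$ acts linearly). The only thing requiring a word of care is the passage from virtual to actual representations, and even there the argument is formal given that $\Phi^e$ and $\Phi^G$ are strong symmetric monoidal (the proposition in \cref{SS:GFP}) and hence send the invertible objects $S^{-V_2}$ to the expected inverses. I would state the lemma's proof in two or three lines citing those two propositions and this compactification fact.
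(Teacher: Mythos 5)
Your proof is correct. The paper gives no proof of this lemma (it is simply introduced with ``The following is well-known:''), and your argument --- reducing to actual representations via the strong monoidality of $\Phi^e$ and $\Phi^G$, then applying $\Phi^G\Sigma^\infty_G X \simeq \Sigma^\infty(X^G)$ together with the elementary identity $(S^V)^G = S^{V^G}$ for a linear representation $V$ --- is exactly the standard derivation that the paper implicitly relies upon.
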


Recall that $P_G = \Z[(\sigma_{G/H})_* : (H)\leq G]$.
\begin{theorem}\label{Thm:ROG}
Let $V \in RO(G)$ and let $M$ be a $G$-manifold satisfying the hypotheses of
\cref{thm:B}. Then the $P_G$-module $H_V^G(C(M);\mfz)$ is finitely generated. 
\end{theorem}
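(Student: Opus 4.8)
The plan is to reduce the $RO(G)$-graded statement to the integer-graded result \cref{thm:B} using the description of $H_V^G$ as $H_0^G(S^{-V}\wedge X;\mfz)$. The essential idea is that smashing with a representation sphere $S^{-V}$ only changes things by a suspension/desuspension after passing to geometric fixed points, so \cref{Lemma:Bredon-to-PhiG} applies just as before. First I would set $X = \Sigma^\infty_G C(M)$ and observe that, for each $K\leq G$, \cref{Lem:RepSpheres} gives $\Phi^K(S^{-V}\wedge H\mfz\wedge X)\simeq S^{-|V^K|}\wedge \Phi^K(H\mfz\wedge X)$, since $\Phi^K$ is strong symmetric monoidal and $\Phi^K S^{-V}= S^{-|V^K|}$. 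Hence $\pi_q\Phi^K(S^{-V}\wedge H\mfz\wedge X)\cong \pi_{q+|V^K|}\Phi^K(H\mfz\wedge X)$.

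Next I would run the proof of \cref{Prop:BredonHomStab} verbatim but with $X$ replaced by $S^{-V}\wedge X$: apply \cref{Lemma:Bredon-to-PhiG} with the Serre class of finitely generated $P_G$-modules, so it suffices to check that $\pi_q\Phi^K(S^{-V}\wedge H\mfz\wedge C(M))$ is finitely generated over $P_G$ for all $K\leq G$ and $q\leq 1$ (or whatever finite range is needed; here $V$ is fixed, so only finitely many degrees of the ordinary theory are involved). By the identification above this is $\pi_{q+|V^K|}\Phi^K(H\mfz\wedge C(M))$, which is finitely generated over $P_K$ by \cref{lem:PhiG-generated} applied to $i_K^*M$ — note $|V^K|$ may be negative, but \cref{lem:PhiG-generated} as stated works for all integers $d$, and the hypotheses ($H$-stabilizability, connectedness of $(i_K^*M)_{(H)}/K$, finite generation of its homology) hold by the hypotheses of \cref{thm:B} together with \cref{lem:hypothesis-translation} and \cref{lem:fg-homology}. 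Then \cref{lem:RK-to-RG} upgrades finite generation over $P_K$ to finite generation over $P_G$ via $\res^G_K$, exactly as in the proof of \cref{Prop:BredonHomStab}.

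One point requiring a little care: \cref{lem:PhiG-generated} is stated for $C(M)$, i.e.\ it produces an $N_d$ depending on the homological degree $d$, and uses \cref{Thm:CGSHomStab} whose stabilization range grows with the number of $[G/H]$-summands in $S$. Since $|V^K|$ shifts the relevant degree by a fixed integer, this only shifts the needed $N$ and does not affect the argument; the stabilization map $\sigma_{G/H}$ still acts compatibly because smashing with the fixed spectrum $S^{-V}$ is natural in $C(M)$ and commutes with the $P_G$-action. I would therefore state the proof compactly as: ``Run the proof of \cref{Prop:BredonHomStab} with $H\mfz\wedge C(M)$ replaced by $S^{-V}\wedge H\mfz\wedge C(M)$, using \cref{Lem:RepSpheres} to identify its geometric fixed points with a shift of those of $H\mfz\wedge C(M)$.''

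I expect the main obstacle to be purely bookkeeping rather than conceptual: making sure that the $P_G$-module structure on $H_V^G(C(M);\mfz)$ really is the one induced by the stabilization maps $\sigma_{G/H}$ (so that \cref{Prop:AlgStability}-style reasoning and \cref{Lemma:Bredon-to-PhiG} with the Serre class of finitely generated $P_G$-modules genuinely apply), and that the degree shift by $|V^K|$ for varying $K\leq G$ does not interact badly with the induction up the subgroup lattice inside \cref{Lemma:Bredon-to-PhiG}. Since that lemma is already stated and proved for an arbitrary $G$-spectrum $X$ and an arbitrary Serre class, with hypotheses only on $\pi_q\Phi^K$, there is no real obstruction — the shift is absorbed into the finite range of degrees one checks.
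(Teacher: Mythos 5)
Your proof is correct and follows the same approach as the paper: use \cref{Lem:RepSpheres} and strong monoidality of $\Phi^K$ to identify $\pi_q\Phi^K(H\mfz\wedge S^{-V}\wedge C(M))$ with a shift of $\pi_{q+|V^K|}\Phi^K(H\mfz\wedge C(M))$, then apply \cref{Lemma:Bredon-to-PhiG} with the Serre class of finitely generated $P_G$-modules and invoke the ingredients of \cref{Prop:BredonHomStab} and \cref{thm:B}. Your extra remarks on bookkeeping (degree shift, compatibility of the $P_G$-action) are sound but do not change the argument.
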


\begin{proof}
Applying \cref{Lemma:Bredon-to-PhiG} with $X = S^{-V} \wedge C(M)$ and $\cc$ the Serre class of finitely generated $P_G$-modules, it suffices to show that $\pi_q \Phi^K(H\mfz \wedge S^{-V} \wedge C(M))$ is in $\cc$ for all $K \leq G$ and all $q \in \z$. \cref{Lem:RepSpheres} implies that
$$\pi_q \Phi^K \left(H\mfz \wedge S^{-V} \wedge C(M) \right) \cong \pi_{q+|V^K|} \Phi^K \left(H\mfz \wedge C(M) \right),$$
and this is finitely generated by combining the same ingredients as the proof of
Proposition \ref{Prop:BredonHomStab} and Theorem \ref{thm:B}.
\end{proof}

\subsection{Mackey functor stability}

If $L \leq G$ is a subgroup, we may view any $G$-manifold $M$ as an $L$-manifold by restriction. If the $G$-manifold $M$ satisfies the hypotheses of \cref{MT:Stability}, then its underlying $L$-manifold clearly also satisfies the conditions of \cref{MT:Stability} (with $L$ in place of $G$) and we obtain the following:

\begin{corollary}
Suppose $M$ satisfies the hypotheses of \cref{thm:B}.
Then $H_d^L(C(M);\mfz)$ is finitely generated over $P_L$ for all integers $d \geq 0$.  
\end{corollary}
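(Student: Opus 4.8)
The plan is to apply \cref{thm:B} not to $M$ directly, but to its restriction $i_L^*M$ regarded as an $L$-manifold in its own right. This is the right move because it is what produces finite generation over $P_L$; merely specializing \cref{thm:B} for $M$ to the subgroup $L$ would only give the weaker finite generation over $P_G$. First, since $M$ is assumed to satisfy the hypotheses of \cref{thm:B}, the group $G$ is abelian, hence so is the subgroup $L$, and $i_L^*M$ is an $L$-manifold.

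Next I would check that $i_L^*M$ satisfies the hypotheses of \cref{thm:B} with $L$ in place of $G$. (i) If $M$ is the interior of a compact $G$-manifold $W$ with boundary, then restricting the action exhibits $i_L^*M$ as the interior of the compact $L$-manifold $i_L^*W$ with boundary. (ii) Given $H\leq L$, the $H$-stabilizability of $M$ as a $G$-manifold provides a $G$-manifold with boundary whose interior is $M$ and which contains a point $p$ with $G$-stabilizer $H$; since $H\leq L$, the $L$-stabilizer of $p$ is $H\cap L=H$, so $i_L^*M$ is $H$-stabilizable as an $L$-manifold. (iii) The $H$-fixed points $(i_L^*M)^H=M^H$ are unchanged by restriction, hence connected for all $H\leq L$.

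Finally I would invoke \cref{thm:B} for the $L$-manifold $i_L^*M$: it gives that $H_d^H(C(i_L^*M);\mfz)$ is finitely generated over $P_L$ for every $H\leq L$. Since the configuration-space construction commutes with restriction of the group action, $C(i_L^*M)=i_L^*C(M)$, and the ring $P_L$, the $L$-stabilization maps $\sigma_{L/H}$ for $H\leq L$, and the resulting action on $H_d^L(C(M);\mfz)$ all depend only on $L$ and not on the ambient group $G$; taking $H=L$ therefore yields the corollary. Letting $L$ range over all subgroups of $G$ gives the analogous level-wise statement for the Bredon homology Mackey functor $\underline{H}_d^{(-)}(C(M);\mfz)$. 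The only point requiring any care is the isotropy computation in (ii), namely that a point whose $G$-stabilizer is $H\leq L$ has $L$-stabilizer exactly $H$; beyond this bookkeeping there is no real obstacle.
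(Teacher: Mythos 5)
Your approach is correct and is essentially the paper's: the paper's justification for this corollary is precisely the remark that restricting the $G$-action to $L$ exhibits $i_L^*M$ as an $L$-manifold satisfying the hypotheses of \cref{thm:B}, and one then applies that theorem in the group $L$. One small inaccuracy in your motivating remark: specializing \cref{thm:B} for $M$ to $H=L$ would in fact also yield finite generation over $P_L$, not merely over $P_G$, because the $P_G$-action on $H_d^L(C(M);\mfz)$ is defined via the restriction map $\res^G_L:P_G\to P_L$; a finite set of $P_G$-module generators thus has coefficients landing in the subring $\res^G_L(P_G)\subseteq P_L$ and so also generates over the larger ring $P_L$. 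Restriction is therefore the cleanest route, not the only one, and your verification of the three hypotheses for $i_L^*M$ is otherwise exactly as needed.
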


Recall that Bredon homology is actually a Mackey functor, i.e., an additive functor from the Burnside category of $G$ to abelian groups. The previous corollary implies that the value of the Mackey functor
$$H_d^{(-)}(C(M); \mfz)$$
stabilizes when evaluated on any object in the Burnside category (i.e., any finite $G$-set). We can encode this levelwise stability, and more, with the following notion:

\begin{definition}
A \emph{module} over a Green functor $\uR$ is a Mackey functor $\uM$ equipped with an action map $\uR \boxtimes \uM \to \uM$ satisfying the usual associativity and unitality conditions. 

An $\uR$-module $\uM$ is \emph{finitely generated} if there is a surjective $\uR$-linear map $\uR\{x_T\} \to \uM$, where $\uR\{x_T\}$ is the free $\uR$-module on a generator $x_T$ with $T$ a finite $G$-set. 
\end{definition}

According to \cite[Lem. 2.17]{Shu10}, a map $\uR \boxtimes \uM \to \uM$ determines, and is determined by, its  \emph{Dress pairing}: a collection of maps $\uR(S) \otimes \uM(S) \to \uM(S)$ for $S$ in the Burnside category of $G$ satisfying certain compatibility axioms.  Using Dress pairings, we can relate levelwise finite generation to Mackey finite generation:

\begin{lemma}\label{Lem:FGLevelwise}
An $\uR$-module $\uM$ is finitely generated if and only if $\uM(S)$ is finitely generated as a $\uR(S)$-module for each finite $G$-set $S$.
\end{lemma}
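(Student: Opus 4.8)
The plan is to prove both implications using the characterization of $\uR$-module structures via Dress pairings (\cite[Lem.~2.17]{Shu10}), which reduces the problem to a levelwise statement once we understand free $\uR$-modules concretely. First I would record the explicit description of the free $\uR$-module $\uR\{x_T\}$ on a generator indexed by a finite $G$-set $T$: by adjunction between the forgetful functor and the free functor, $\uR\{x_T\}$ is the Mackey functor $S \mapsto \uR(S \times T)$, with $\uR$-action given by the Dress pairing inherited from $\uR$; in particular its value at a finite $G$-set $S$ is a free (or at least finitely generated) module over $\uR(S)$ once $T$ is a finite $G$-set, since $S\times T$ is. More generally, a finite direct sum $\bigoplus_j \uR\{x_{T_j}\}$ evaluates at $S$ to $\bigoplus_j \uR(S\times T_j)$, a finitely generated $\uR(S)$-module.

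For the forward direction, suppose $\uM$ is finitely generated, so there is a surjection $\uR\{x_T\} \to \uM$ of $\uR$-modules (one may also allow a finite sum of such free modules; the argument is identical). Evaluating at any finite $G$-set $S$ gives a surjection of abelian groups $\uR(S\times T) \to \uM(S)$ which, by the Dress-pairing compatibility, is $\uR(S)$-linear. Since $\uR(S\times T)$ is finitely generated over $\uR(S)$ by the previous paragraph, so is its quotient $\uM(S)$. This proves one direction.

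For the converse, suppose $\uM(S)$ is finitely generated over $\uR(S)$ for every finite $G$-set $S$. The key point is that it suffices to impose this at the finitely many transitive $G$-sets $G/H$, $H \leq G$: pick, for each $H$, a finite generating set of $\uM(G/H)$ over $\uR(G/H)$, say of size $r_H$, and assemble the corresponding elements into a map from the finite free $\uR$-module $F := \bigoplus_{H\leq G} \uR\{x_{G/H}\}^{\oplus r_H}$ to $\uM$ (using that a map out of $\uR\{x_{G/H}\}$ is determined by the image of the generator, an element of $\uM(G/H)$, by the Yoneda-type adjunction). I claim $F \to \uM$ is surjective, i.e.\ surjective at every finite $G$-set $S$. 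Since every finite $G$-set is a disjoint union of orbits and Mackey functors are additive, it suffices to check surjectivity at each $G/K$. Here one uses the transfer (induction) maps in the Mackey structure: given $m \in \uM(G/K)$, the restriction/transfer relations together with the double-coset formula express $m$ in terms of the chosen generators at the various $\uM(G/H)$ transferred up to $G/K$, all of which lie in the image of $F(G/K)$ because $F \to \uM$ is a map of Mackey functors and the image is closed under the transfers and restrictions coming from $\uR$ acting on generators. Hence $F(G/K) \to \uM(G/K)$ is onto, so $\uM$ is finitely generated over $\uR$.

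The main obstacle is the converse direction, specifically verifying carefully that the finitely many orbit-level generators actually generate $\uM$ as a Mackey-functor-valued $\uR$-module — that is, that the sub-$\uR$-module they generate is closed under all transfers and restrictions and hence is all of $\uM$. This is exactly where one must use the Dress-pairing axioms and the interaction of the $\uR$-action with the Mackey structure (Frobenius reciprocity), rather than treating each level independently; the forward direction and the computation of free modules are routine by comparison.
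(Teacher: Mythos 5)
Your proof fleshes out the paper's one-line argument that ``surjectivity for Mackey functors is checked levelwise,'' and the identification $\uR\{x_T\}(S)\cong\uR(S\times T)$ together with the Yoneda/adjunction construction of the comparison map are the right moves. However, the forward direction as written contains a gap: the assertion that $\uR(S\times T)$ is finitely generated over $\uR(S)$ ``since $S\times T$ is'' a finite $G$-set is a non sequitur. The $\uR(S)$-module structure is induced by the restriction ring map $\uR(S)\to\uR(S\times T)$, and for an arbitrary Green functor this need not be a finite ring extension: for $G=C_2$ one can build a Green functor with $\uR(G/e)$ not finitely generated over $\uR(G/G)$ (e.g.\ take $\uR(G/e)=\Z[t_1,t_2,\dots]$ with $\gamma(t_i)=-t_i$ and $\uR(G/G)$ the fixed subring), and then the free module $\uR\{x_{G/e}\}$ is finitely generated by definition while $\uR\{x_{G/e}\}(G/G)\cong\uR(G/e)$ is not finitely generated over $\uR(G/G)$. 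So the forward implication genuinely needs a hypothesis on $\uR$. In the paper's only application, $\uR=\uP_G$ is a \emph{constant} Green functor, where restrictions are identities and $\uR(S\times T)$ is a finite direct sum of copies of $\uR(S)$, so the issue disappears; you should either record such a hypothesis or note that you are working in that case.

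For the converse, your reduction to orbits via additivity and your construction of $F=\bigoplus_{H}\uR\{x_{G/H}\}^{\oplus r_H}\to\uM$ are correct, but the appeal to transfers and the double coset formula to ``express $m$ in terms of generators at other levels'' is both unnecessary and somewhat off target. The summand $\uR\{x_{G/K}\}^{\oplus r_K}\to\uM$ already surjects onto the $\uR(G/K)$-span of the chosen generators $m_{K,1},\dots,m_{K,r_K}$ inside $\uM(G/K)$: for $s\in\uR(G/K)$, push $s$ forward along the diagonal $G/K\to G/K\times G/K$ and observe, via Frobenius reciprocity, that the resulting element maps to $s\cdot m_{K,i}$. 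Since the $m_{K,i}$ generate $\uM(G/K)$ over $\uR(G/K)$ by assumption, surjectivity at $G/K$ is immediate, with no need to import generators from other levels.
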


\begin{proof}
Surjectivity for Mackey functors is checked levelwise. 
\end{proof}

To express stability in terms of Mackey functors, we need a Green functor to encode the action of the equivariant stabilization maps for each subgroup of $G$. This is accomplished through the following:

\begin{definition}
For each finite group $G$, let $\uP_G$ be the constant Green functor on $P_G=\z[(\sigma_{G/H})_*: (H) \leq G]$. (\emph{A priori}, $\uP_G$ is only a Mackey functor. It is a Green functor since it is the fixed point functor of a commutative ring with trivial action.) We define an action of $\uP_G$ on $H_*^{(-)}(C(M); \mfz)$ via Dress pairing: the map
$$\uP_G(G/H) \otimes H_*^H(C(M);\mfz) \cong P_G\otimes H^H_*(C(M);\mfz) \to H_*^H(C(M);\mfz)$$
comes from the restriction map $\res^G_H:P_G\to P_H$ as defined in Section \ref{SS:BredonStab}.
\end{definition}

In particular, if $H_*^H(C(M);\mfz)$ is finitely generated over $\uP_H(H/H)$ for every subgroup $H \leq G$, and if $\uP_H(H/H)$ is finitely generated as a $\uP_G(G/H)$-module via the restriction map (e.g., if every subgroup of $G$ is normal), then $H_*^H(C(M);\mfz)$ is finitely generated as a $\uP_G(G/H)$-module. Combined with \cref{Lem:FGLevelwise}, this proves:

\begin{theorem}\label{Thm:MF}
Let $M$ be as in \cref{thm:B}. Then $H_d^{(-)}(C(M); \mfz)$ is finitely generated over $\uP_G$ for all integers $d \geq 0$.  
\end{theorem}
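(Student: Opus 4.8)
The plan is to assemble \cref{Thm:MF} from the already-established levelwise stability results together with the Dress-pairing formalism recalled just above. The statement to prove is that the Mackey functor $H_d^{(-)}(C(M);\mfz)$ is finitely generated as a $\uP_G$-module. By \cref{Lem:FGLevelwise}, this is equivalent to checking that $H_d^S(C(M);\mfz) := H_d^{(-)}(C(M);\mfz)(S)$ is finitely generated as a $\uP_G(S)$-module for every finite $G$-set $S$. Since both sides are additive in $S$, it suffices to treat orbits $S = G/H$, i.e.\ to show $H_d^H(C(M);\mfz)$ is finitely generated over $\uP_G(G/H) = P_G$ (the constant Green functor has $\uP_G(G/H) = P_G$ for every $H$), where $P_G$ acts through the restriction $\res^G_H : P_G \to P_H$.

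First I would invoke \cref{thm:B} (equivalently \cref{MT:Stability}): under the standing hypotheses on $M$, for each $H \leq G$ the group $H_d^H(C(M);\mfz)$ is finitely generated as a $P_G$-module, with the $P_G$-action via $\res^G_H$. This is exactly the content needed; in fact \cref{thm:B} already produces the action through the restriction map, so there is essentially nothing further to check at the level of individual orbits. The only remaining point is bookkeeping: the action of $\uP_G$ on $H_d^{(-)}(C(M);\mfz)$ is, by definition, the Dress pairing whose $(G/H)$-component is the composite $\uP_G(G/H) \otimes H_d^H(C(M);\mfz) \cong P_G \otimes H_d^H(C(M);\mfz) \to H_d^H(C(M);\mfz)$ coming from $\res^G_H$, so the module structure produced by \cref{thm:B} is literally the one appearing in the definition of the $\uP_G$-module. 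Hence finite generation over $\uP_G(G/H)$ holds for every orbit, and \cref{Lem:FGLevelwise} upgrades this to finite generation of the Mackey functor over $\uP_G$.

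I do not expect a serious obstacle here: the theorem is a repackaging, and every ingredient (the decomposition of fixed points, the reduction of Bredon homology to geometric fixed points via \cref{Lemma:Bredon-to-PhiG}, nonequivariant stability, and the integrality argument of \cref{lem:RK-to-RG}) has already been carried out in proving \cref{thm:B}. The one spot that deserves a sentence of care is verifying that the Dress pairing in the definition of the $\uP_G$-action is a legitimate module structure — i.e.\ that the restriction maps $\res^G_H$ are compatible with the Burnside-category structure so that \cite[Lem.\ 2.17]{Shu10} applies — but this is immediate because $\uP_G$ is the fixed-point Green functor of a commutative ring with trivial $G$-action, so its restrictions are identities and the compatibility axioms hold trivially. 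The proof therefore reads: apply \cref{thm:B} at each $H \leq G$ to get levelwise finite generation over $P_G = \uP_G(G/H)$, note that the $P_G$-action there is the Dress pairing defining the $\uP_G$-module structure, and conclude by \cref{Lem:FGLevelwise}.
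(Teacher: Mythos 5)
Your proof is correct and follows essentially the same route as the paper's: reduce to orbit-levels via \cref{Lem:FGLevelwise}, observe that the $\uP_G$-action on $H_d^H(C(M);\mfz)$ defined by the Dress pairing is precisely the $P_G$-action through $\res^G_H$, and invoke \cref{thm:B} (whose proof already passes through \cref{lem:RK-to-RG}) to get levelwise finite generation. The paper phrases this by first stating the intermediate corollary giving finite generation of $H_d^L(C(M);\mfz)$ over $P_L$ and then reapplying \cref{lem:RK-to-RG}, but the content is identical.
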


\bibliographystyle{alpha}
\bibliography{master}

\end{document}